\definecolor{bleuf}{rgb}{0.,0.,0.8}
\newcounter{corr}
\definecolor{violet}{rgb}{0.580,0.,0.827}
\newcommand{\corr}[3]{\typeout{Warning : a correction remains in page
\thepage}
				\stepcounter{corr}   \def\identite{{\mathrm{Id}}}     
				{\color{blue}\ifmmode\text{\,\sout{\ensuremath{#1}}\,}\else\sout{#1}\fi}
       {\color{red}#2}
       {\color{violet} #3}}
\newtheorem{theorem}{Theorem}[section]
\newtheorem{definition}{Definition}[section]
\newtheorem{lemma}[theorem]{Lemma}
\newtheorem{remark}{Remark}[section]
\newcommand{\bfe}{{\boldsymbol e}}
\newcommand{\bfE}{{\boldsymbol E}}
\newcommand{\bff}{{\boldsymbol f}}
\newcommand{\bfn}{\boldsymbol n}
\newcommand{\bfu}{{\boldsymbol u}}
\newcommand{\bfv}{{\boldsymbol v}}
\newcommand{\bfV}{{\boldsymbol V}}
\newcommand{\bfw}{{\boldsymbol w}}
\newcommand{\bfW}{{\boldsymbol W}}
\newcommand{\bfx}{\boldsymbol x}
\newcommand{\bfvarphi}{{\boldsymbol \varphi}}
\newcommand{\bfH}{{\boldsymbol H}}
\newcommand{\bfC}{\boldsymbol C}
\newcommand{\dx}{\ \mathrm{d}\bfx}
\newcommand{\dt}{\ \mathrm{d} t}
\newcommand{\ds}{\ \mathrm{d} s}
\newcommand{\nstep}{N}
\newcommand{\nalgo}{n}
\newcommand{\nnn}{{n \in \xN}}
\newcommand{\nti}{n \to + \infty}
\newcommand{\Nti}{\nstep \to + \infty}
\newcommand{\deltat}{{\delta t}}
\newcommand{\disc}{{\mathcal D}}
\newcommand{\mesh}{{\mathcal M}}
\newcommand{\edge}{{\sigma}}
\newcommand{\edges}{{\mathcal E}}
\newcommand{\edgesext}{{\mathcal E}_{\mathrm{ext}}}
\newcommand{\edgesexti}{{\mathcal E}_{\mathrm{ext}}^{(i)}}
\newcommand{\edgesi}{{\edges\ei}}
\newcommand{\edgesj}{{\edges\ej}}
\newcommand{\edged}{\epsilon}
\newcommand{\edgesd}{{\widetilde {\edges}}}
\newcommand{\edgesdinti}{{\edgesd^{(i)}_{{\rm int}}}}
\newcommand{\edgesdexti}{{\edgesd^{(i)}_{{\rm ext}}}}
\newcommand{\ei}{^{(i)}}
\newcommand{\ej}{^{(j)}}
\newcommand{\Hmeshzero}{\bfH_{\! N,0}}
\newcommand{\HmeshNzero}{\bfH_{\! N,0}}
\newcommand{\Hmeshi}{H_N^{(i)}}
\newcommand{\HmeshiNzero}{H_{N,0}^{(i)}}
\newcommand{\dive}{{\mathrm{div}}}
\newcommand{\gradi}{\boldsymbol \nabla}
\newcommand{\xN}{\mathbb{N}}
\newcommand{\xR}{\mathbb{R}}
\newcommand{\blist}{\begin{list}{-}{\itemsep=0.5ex \topsep=0.5ex \leftmargin=1.cm \labelwidth=0.3cm \labelsep=0.5cm \itemindent=0.cm}}
\newcommand{\Char}{{\mathds 1}}
\newcounter{cst}
\newcommand{\ctel}[1]{C_{\refstepcounter{cst}\label{#1}\thecst}}
\newcommand{\cter}[1]{C_{\ref{#1}}}
\begin{document}
\title[Convergence of the  projection scheme]
{Convergence of the fully discrete incremental projection scheme for  incompressible flows}

\author{T. Gallou\"et}
\address{I2M UMR 7373, Aix-Marseille Universit\'e, CNRS, Ecole Centrale de Marseille. 39 rue Joliot Curie, 13453 Marseille, France}
\email{thierry.gallouet@univ-amu.fr}

\author{R. Herbin}
\address{I2M UMR 7373, Aix-Marseille Universit\'e, CNRS, Ecole Centrale de Marseille. 39 rue Joliot Curie, 13453 Marseille, France}
\email{raphaele.herbin@univ-amu.fr}

\author{J.C. Latch\'e}
\address{Institut de Radioprotection et de S\^{u}ret\'{e} Nucl\'{e}aire (IRSN)}
\email{jean-claude.latche@irsn.fr}

\author{D. Maltese}
\address{LAMA, Universit\'e Gustave Eiffel, France.}
\email{david.maltese@univ-e}

\subjclass[2000]{35Q30, 65M08, 65N12, 76M12}
\keywords{Incompressible Navier-Stokes equations, projection scheme, staggered finite volumes, MAC scheme, analysis, convergence}

\begin{abstract}
The present paper addresses the convergence of a first order in time incremental projection scheme for the time-dependent incompressible Navier--Stokes equations to a weak solution, without any assumption of existence or regularity assumptions on the exact solution. 
We prove  the convergence of the approximate solutions obtained by the semi-discrete scheme and a fully discrete scheme using a  staggered finite volume scheme on non uniform rectangular meshes. 
Some first \emph{a priori} estimates on the approximate solutions yield the existence.
Compactness arguments, relying on these estimates, together with some estimates on the translates of the discrete time derivatives, are then developed to obtain convergence (up to the extraction of a subsequence), when the time step tends to zero in the semi-discrete scheme and when the space and time steps tend to zero in the fully discrete scheme;  the approximate solutions are thus shown to converge to a limit function which  is then shown to be a weak solution to the continuous problem by passing to the limit in these schemes.
\end{abstract}
\maketitle
%
%
\section{Introduction}
The incompressible Navier–Stokes equations for a homogeneous fluid read:
\begin{subequations} \label{pb:cont}
\begin{align}
 \label{qdm} &
 \partial_t \bfu + (\bfu \cdot \gradi)\bfu -  \Delta \bfu +\nabla p = \bff~\text{in}~(0, T) \times \Omega,
\\ \label{inc} &
\dive \bfu=0~\text{in}~(0, T) \times \Omega,
\end{align}
\end{subequations}
where the density and the viscosity are set to one for the sake of simplicity, and  where
\begin{equation}\label{hyp:T-Omega}
    \begin{array}{l}
        T>0, \mbox{ and }\Omega  \mbox{ is a connected, open and bounded subset of }\ \xR^3,\\  \mbox{with a Lipschitz boundary } \partial \Omega.
    \end{array}
\end{equation}
Note that we only consider the three dimensional setting in this work, but the analysis may be carried out in a similar (and often somewhat simpler) manner in the one or two dimensional setting. 
The variables $\bfu$ and $p$ are respectively the velocity and the pressure in the flow, and Eqns. \eqref{qdm} and \eqref{inc} respectively enforce the momentum conservation and the mass conservation and incompressibility of the flow. 
This system is supplemented with the boundary condition 
\begin{equation}
\bfu=0~\text{on}~ (0,T) \times \partial \Omega,  \label{boundary}
\end{equation}%
and the initial condition 
\begin{equation}
\bfu(0)=\bfu_0~\text{in}~  \Omega.  \label{init}
\end{equation} 
The function $\bfu_0$ is the initial datum for the velocity and the function $\bff$ is the source term.
Throughout the paper, we shall assume that 
\begin{equation}\label{hyp:f-u0}
\bff 
\in L^2((0,T) \times \Omega)^3  \mbox{ and }\bfu_0 
  \in \bfE(\Omega),
\end{equation}
where $\bfE(\Omega)$ is the subset of $H_0^1(\Omega)^3$ of divergence-free functions, defined by
\[
\bfE(\Omega) = \{ \bfu \in H_0^1(\Omega)^3~\text{such that}~\dive \bfu = 0 \}.
\]
Note that in fact, the initial condition is assumed to be in $\bfE(\Omega)$ for the sake of simplicity. 
It could be considered in $L^2(\Omega)^3$ only, see Remark \ref{rem:semid-init}.

Let us define the weak solutions of Problem  ($\ref{pb:cont}$)-($\ref{init}$) in the sense of Leray \cite{leray1934}.
\begin{definition}[Weak solution]\label{def:weaksol}
Under the assumptions \eqref{hyp:T-Omega} and \eqref{hyp:f-u0}, a function $\bfu \in L^2(0,T;\bfE(\Omega))$ $\cap$ $L^\infty(0,T;L^2(\Omega)^3)$ is a weak solution of the  problem \eqref{pb:cont}-\eqref{init} if 
\begin{multline} \label{weaksol}
- \int_0^T \!\!\int_\Omega \bfu  \cdot \partial_t \bfv  \dx - \int_0^T \!\!\int_\Omega \bfu \otimes \bfu : \gradi \bfv \dx \dt   + \int_0^T \!\!\int_\Omega \gradi \bfu  : \gradi \bfv \dx \dt \\
= \int_\Omega \bfu_0 \cdot \bfv(0,\cdot) \dx +  \int_0^T \!\!\int_\Omega \bff \cdot \bfv \dx \dt
\end{multline}
for any  $\bfv$ in $\bigl \{\bfw\in C_c^\infty (\Omega \times [0,T))^3,\ \dive \bfw=0 \mbox{ a.e. in } \Omega \times (0,T) \bigr\}$.
\end{definition}

The first projection method to solve the system \eqref{pb:cont} was designed over 50 years ago, and is known as the Chorin-Temam algorithm \cite{Chorin1969OnTC,Temam1969SurLD,temam1984navier}. 
It consists in a prediction step based on a linearized momentum equation without the pressure gradient, and a pressure correction step that enforces the divergence-free constraint.
This method and its variants are now often referred to (following \cite{Guermond2006AnOO}) as non incremental projection schemes, in opposition to the incremental projection schemes that were obtained by adding the old pressure gradient in the prediction step (see \cite{Goda1979AMT} for a first-order time scheme and \cite{Kan1986ASA} of a second order time scheme). 
These latter schemes are indeed  incremental in the sense that the correction step may now be seen as solving an equation on the time increment of the pressure.
They seem to be much more efficient from a computational point of view  \cite{Guermond2006AnOO} and have been the object of several error analysis, under some regularity assumptions on the solution of the continuous problem, in the semi discrete setting, see \cite{Guermond2006AnOO} and references therein. 

The non incremental schemes have been the object of some analyses in the fully discrete setting. 
In \cite{bad-2007-conv} some error estimates are derived for a non incremental scheme with a discretization by the finite element, under some regularity assumptions on the exact solution,
In a recent paper the approximate solutions of a fully discrete non incremental scheme with a uniform staggered discretization \cite{Kuroki2020OnCO} are shown to converge to a weak solution (and so without any regularity assumption on the solution of \eqref{pb:cont}) under the condition that $h \le \delta t^{3-\alpha}$ where $h$ and $\delta t$ are respectively the mesh size and the time step and with $0 < \alpha \le 2$.

However, to our knowledge, up to now, no proof of convergence exists for the fully discrete incremental projection schemes, even though they are the most used in practice.
The purpose of the present work is therefore to fill this gap and to show the convergence of the incremental projection method with a discretization by a staggered finite volume scheme based on a (non uniform) MAC grid, without any regularity assumption on the exact solution.

The Marker-And-Cell (MAC) scheme, introduced in the middle of the sixties (see \cite{Harlow1965NumericalCO}), is one of the most popular methods (see e.g. \cite{Patankar2019NumericalHT} and \cite{Wesseling2000PrinciplesOC}) for the approximation of the Navier --Stokes equations in the engineering framework, because of its simplicity, its efficiency and its remarkable mathematical properties. 
Although originally presented as a finite difference scheme on uniform meshes, the MAC scheme is in fact a finite volume scheme and as such can be used on non uniform meshes. 
The convergence analysis of the staggered finite volume scheme on the MAC mesh using a fully implicit time scheme may be found in \cite{gal-18-convmac}, and we shall use several of the tools developed therein.
We also refer to this latter paper for some more references on studies of the MAC scheme.

 \medskip
The paper is organized as follows.
Section \ref{sec:semidisc} deals with the convergence analysis for the semi-discrete projection algorithm. 
The fully discrete scheme is analysed in Section \ref{sec:MAC-def}; we only give the main ingredients of the staggered space discretization that we use, and which is often referred to as the MAC scheme. 
To avoid a lengthy description, the precise definitions of the now classical discrete MAC operators are to be found in \cite{gal-18-convmac}. 
  
Before starting the analysis of the semi-discrete and fully discrete schemes, we wish to recall, for the sake of clarity, that:
\begin{itemize}
 \item In a Banach space $E$ equipped with a norm $\Vert \cdot \Vert_E$, a sequence $(u_n)_\nnn \subset E$ is said to converge to $u \in E$ if $\Vert u_n - u\Vert_E\to 0$ as $\nti$, while it is said to weakly converge to $u \in E$ if for any continuous linear form $T\in E'$, one has $T(u_n) \to  T(u)$ as $\nti$.
\item A sequence $(T_n)_\nnn \subset E'$ is said to $\star$-weakly converge to $T \in E'$ if for any $u \in E$, one has $T_n(u) \to  T(u)$ as $\nti$.
\item If $E=L^p(\Omega)$, where $1 \le p <+\infty$ and $\Omega$ is an open set of $\mathbb{R}^3$, the space $E'$ is identified to $L^q(\Omega)$, $q=p/(p-1)$.
\item For $T>0$ and $E=L^1((0,T), L^2(\Omega))$, the space $L^\infty((0,T), L^2(\Omega))$ is identified with $E'$.
\end{itemize}

In the appendix, we give some useful technical lemmas.

\section{Analysis of the time semi-discrete incremental projection scheme} \label{sec:semidisc}

We consider a partition of the time interval $[0,T]$, which we suppose uniform to alleviate the notations, so that the assumptions read:

\begin{equation}\label{hyp:timedisc}
  N\ge 1, \qquad \deltat_{\! N} = \frac T N, \qquad t_N^n  = n\,\deltat_{\! N} \mbox{ for }  n \in \llbracket 0,N\rrbracket.\end{equation}
 

\subsection{The time semi-discrete scheme} 
Under the assumptions \eqref{hyp:timedisc}, the usual first order time semi-discrete incremental projection scheme (see \cite{Shen1992OnEE}) reads:  
\begin{subequations}\label{eq:semidiscrete}  
\begin{align} 
 & \mbox{\emph{Initialization:}} \nonumber \\
 &\hspace{2ex} \mbox{Let}~ \bfu_N^0 = \bfu_0 \in\bfE(\Omega) ~\text{and}~ p_N^0 =0. \label{eq:semidisc-init}\\[2ex]
 &\mbox{Solve for } 0 \le n \le N-1: \nonumber \\
 & \hspace{2ex}\mbox{\emph{Prediction step:}}   \nonumber \\
  & \label{eq:semidiscretepre}  \hspace{2ex}
\frac{1}{\deltat_{\! N}} ( \tilde{\bfu}_N^{n+1} - \bfu_N^\nalgo) +\dive(\tilde{\bfu}_N^{n+1}  \otimes \bfu_N^\nalgo)+ \nabla p_N^\nalgo  - \Delta \tilde{\bfu}_N^{n+1} = \bff_{\! N}^{n+1} \mbox{ in }\Omega, 
\\[1ex] \label{eq:semidiscretebound}
 & \hspace{1ex}
\tilde \bfu_N^{\nalgo+1} = 0\mbox{ on }\partial \Omega.
\\[2ex] 
  & \mbox{\emph{Correction step:}} \nonumber \\ 
  \label{eq:semidiscretecor}  
&\hspace{2ex}
%
\frac{1}{\deltat_{\! N}} (\bfu_N^{\nalgo+1} - \tilde{\bfu}_N^{n+1}) + \nabla (p_N^{n+1} - p_N^\nalgo) = 0 \mbox{ in }\Omega, 
\\[1ex] \label{eq:semidiscretediv} 
&  \hspace{2ex}
\dive \bfu_N^{\nalgo+1} = 0 \mbox{ in }\Omega \mbox{ and } \bfu_N^{\nalgo+1} \cdot \bfn = 0 \mbox{ on }\partial \Omega,
\\[1ex] \label{eq:semidiscreteintp} 
&  \hspace{2ex}
\int_\Omega p_N^{\nalgo+1} \dx = 0,
\end{align}
\end{subequations}
where $\bfn$ stands for the outward normal unit vector to the boundary $\partial \Omega$ and $\bff_{\! N}^{n+1} \in (L^2(\Omega))^3$ is defined by  
\[
  \bff_{\! N}^{n+1}(\bfx)=\dfrac 1 {\deltat_{\! N}}\int_{t^n}^{t^{n+1}} \bff(t,\bfx)  \dt,\ \text{for a.e.}~\bfx \in \Omega.
\]

Let us briefly account for the existence of a solution at each step of this algorithm.

\medskip

\noindent \textbf{Prediction step} --
A weak form of Eqns. \eqref{eq:semidiscretepre}-\eqref{eq:semidiscretebound} reads 
\begin{align}
&\mbox{Find } \tilde \bfu_N^{\nalgo+1} \in H_0^1(\Omega)^3 \mbox{ such that for any } \bfvarphi \in C_c^1(\Omega)^3, \nonumber\\
&\frac{1}{\deltat_{\! N}} \int_\Omega \tilde \bfu_N^{\nalgo+1} \cdot \bfvarphi \dx - \int_\Omega \tilde \bfu_N^{\nalgo+1} \otimes \bfu_N^\nalgo : \gradi \bfvarphi \dx + \int_\Omega \gradi \tilde \bfu_N^{\nalgo+1} : \gradi \bfvarphi \dx \label{eq:semi-weakpred}\\
&\qquad \qquad = \frac{1}{\deltat_{\! N}} \int_\Omega  \bfu_N^\nalgo \cdot \bfvarphi \dx + \int_\Omega p_N^\nalgo \dive \bfvarphi \dx + \int_\Omega \bff_{\! N}^{n+1} \cdot \bfvarphi \dx.\nonumber
\end{align}
The existence of the predicted velocity is then a consequence of Lemma \ref{lem:existpre}.

\medskip
\noindent \textit{Correction step} -- 
A weak form of Eqns. \eqref{eq:semidiscretecor}-\eqref{eq:semidiscretediv} reads
\begin{subequations}\label{eq:semi-weakcor}
\begin{align}
 &\mbox{Find } p_N^{n+1} \in H^1(\Omega) \mbox{ such that } \psi_N^{n+1} = p_N^{n+1} - p_N^n \in H^1(\Omega) \mbox{ satisfies :}   \\  
 &\int_\Omega \nabla \psi_N^{n+1} \cdot \nabla \varphi \dx = \frac 1 {\deltat_{\! N}} \int_\Omega \tilde \bfu_N^{n+1}\cdot \nabla \varphi \dx,~\text{for any}~ \varphi \in H^1(\Omega),  \\~\nonumber\\
 &\mbox{Set }\bfu_N^{\nalgo+1} =  \tilde \bfu_N^{\nalgo+1} -   {\deltat_{\! N}} \gradi  \psi_N^{n+1}.
\end{align}
\end{subequations}
If $\bfu_N^{\nalgo+1}$ satisfies \eqref{eq:semi-weakcor}, then $\int_\Omega  \bfu_N^{\nalgo+1}\cdot \gradi \varphi \dx = 0$ for any $\varphi  \in H^1(\Omega)$, so that $ \bfu_N^{n+1}$ belongs to  the space $\bfV(\Omega)$ of ``$L^2$-divergence-free functions'' defined by
\begin{equation}\label{def:V}
\bfV(\Omega)= \{ \bfu \in L^2(\Omega)^3~\text{such that}~\int_\Omega \bfu \cdot \gradi \xi \dx=0~\text{for any}~\xi \in H^1(\Omega)\}.
\end{equation}
The existence of $(\bfu_N^{\nalgo+1},p_N^{\nalgo+1}) \in \bfV(\Omega) \times H^1(\Omega) $ satisfying \eqref{eq:semi-weakcor}  is a consequence of the decomposition result of Lemma \ref{lem:decomp} given in the appendix.
Indeed, this correction step is the decomposition stated in Lemma \ref{lem:decomp} applied to the predicted velocity $\tilde \bfu^{\nalgo+1}$.
Note that $p_N^{\nalgo+1}$ is  uniquely defined thanks to \eqref{eq:semidiscreteintp}.

\begin{remark}
Summing \eqref{eq:semidiscretepre} at step $n$ and \eqref{eq:semidiscretecor} at step $n-1$, we obtain for $\nalgo \in \llbracket 1,N-1 \rrbracket$ 
\begin{equation}\label{eq:discretepre}
\frac{1}{\deltat_{\! N}} (\tilde \bfu_N^{n+1} - \tilde \bfu_N^n) + \dive (\tilde \bfu_N^{n+1} \otimes \bfu_N^n) + \gradi (2p_N^n -p_N^{n-1}) - \Delta \tilde \bfu_N^{n+1} =  \bff_{\! N}^{n+1}.
\end{equation}
\end{remark}

We may thus state the following existence result and define the approximate solutions obtained by the projection scheme \eqref{eq:semidiscrete}.
\begin{definition}[Approximate solutions, semi-discrete case] \label{def:exis-semid}
 Under the assumptions \eqref{hyp:T-Omega},\eqref{hyp:f-u0} and \eqref{hyp:timedisc},
 there exists $(\tilde \bfu_N^{\nalgo},\bfu_N^{\nalgo},$ $p_N^{\nalgo})_{\nalgo \in \llbracket 1,N \rrbracket} \subset$  $H_0^1(\Omega)^3\times \bfV(\Omega)$ $ \times H^1(\Omega)$ satisfying \eqref{eq:semidiscrete}. 
 We then define the functions  $\bfu_\nstep : (0,T) \to \bfV(\Omega)$ and $\tilde \bfu_\nstep: (0,T) \to H_0^1(\Omega)^3$ by
\begin{equation}\label{eqdef:fullfunctions-semid}
\bfu_\nstep (t) = \sum_{n=0}^{\nstep-1} \ \mathds 1_{(t^\nalgo_\nstep,t^{\nalgo+1}_\nstep]}(t)\bfu^{\nalgo}_\nstep,\quad \quad
\tilde \bfu_\nstep (t) = \sum_{n=0}^{N-1} \  \mathds 1_{(t^\nalgo_\nstep,t^{n+1}_\nstep]}(t) \tilde \bfu^{\nalgo+1}_\nstep,
\end{equation}
where  $(\tilde \bfu^{\nalgo}_\nstep)_{n \in \llbracket 1,N\rrbracket} $ and $(\bfu^{\nalgo}_\nstep)_{n \in \llbracket 1,N\rrbracket}$ are a solution to \eqref{eq:semidiscrete}, where $\mathds 1_A$ denotes the indicator function of a given set $A$.
 \end{definition}
 
\begin{remark}[On the boundary conditions] \label{rem:bc1}
The original homogeneous Dirichlet boundary conditions \eqref{boundary} of the strong formulation \eqref{pb:cont} is imposed on the weak solution through the functional space $H^1_0(\Omega)^3$. 
Note that this condition is only imposed on the predicted velocity in the algorithm  \eqref{eq:semidiscrete}. 
Indeed, the corrected velocity does not satisfy the full Dirichlet condition \eqref{boundary} but only the no slip condition imposed by \eqref{eq:semidiscretediv}.
The compactness of the sequence of predicted velocities  $\tilde \bfu$ together with the convergence of   $\bfu - \tilde \bfu$ towards zero in $L^2$ as the time step tends to zero will be the mean to prove that the Dirichlet boundary condition is finally satisfied on the limit of the numerical approximations.
Note also that there is no need for a boundary condition on the pressure in the correction step. 
In fact, it can be inferred from the correction step \eqref{eq:semi-weakcor} that the incremental pressure $\psi^{n+1} =  p^{n+1} - p^\nalgo$ satisfies a Poisson equation on $\Omega$ with a Neumann boundary condition on the boundary, but this is a redundant information that does not need to be implemented. 
We refer to \cite{rempfer-2006} for an interesting discussion on these boundary conditions. 
\end{remark}

\begin{remark}[On the initial condition]\label{rem:semid-init}
 In fact, the existence of a solution (see Lemma  \ref{lem:existpre}) only requires the initial velocity $\bfu^0_N$ to be in $\bfV(\Omega)$, so that we could relax the assumption on the initial condition $\bfu_0 \in \bfE(\Omega)$ to $\bfu_0 \in L^2(\Omega)^3$ and take $\bfu^0= \mathcal{P}_{\bfV(\Omega)} \bfu_0$ as the orthogonal projection of $\bfu_0$ onto the closed subspace $\bfV(\Omega)$ of  $L^2(\Omega)^3$, also known as the Leray projection. 
In this case, $\bfu^0_N$ can be computed as $\bfu^0  = \bfu_0 - \gradi \psi $ where $\psi \in H^1(\Omega)$ is a solution  (unique, up to a constant) of the following problem (see Lemma \ref{lem:decomp})
\begin{subequations}
\begin{align*}
 &\psi \in H^1(\Omega), \\  
 &\int_\Omega \gradi \psi \cdot \gradi \varphi \dx = \int_\Omega  \bfu_0 \cdot \gradi \varphi \dx,~\text{for any}~ \varphi \in H^1(\Omega).
\end{align*}
\end{subequations}

\end{remark}

\begin{theorem}[Convergence of the semi-discrete in time projection algorithm] \label{theo:conv-semid}
 Under the assumptions \eqref{hyp:T-Omega} and \eqref{hyp:f-u0}, consider for $N \ge 1$, the time discretization defined by \eqref{hyp:timedisc}, and the approximate solutions  $\bfu_N$ and $\tilde \bfu_N$ of the projection algorithm \eqref{eq:semidiscrete} as given in Definition \ref{def:exis-semid}.
 Then there exists $\bar \bfu \in L^2(0,T;\bfE(\Omega))$ $\cap$ $L^\infty(0,T;L^2(\Omega)^3)$ such that up to a subsequence, 
\begin{itemize}
  \item  the sequence  $(\tilde \bfu_N)_{N \ge 1}$  converges to $\bar \bfu$ in $L^2(0,T;L^2(\Omega)^3)$ and  weakly in $L^2(0,T;H_0^1(\Omega)^3)$,
  \item the sequence  $( \bfu_N)_{N \ge 1}$  converges to $\bar \bfu$  in $L^2(0,T;L^2(\Omega)^3)$ and $\star$-weakly in $L^\infty(0,T;L^2(\Omega)^3)$.
\end{itemize}
 Moreover the function $\bar \bfu$ is a weak solution to \eqref{pb:cont} in the sense of Definition \ref{def:weaksol}.
 \end{theorem}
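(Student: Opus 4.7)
The plan is to follow the standard Leray program adapted to the projection setting: derive discrete energy estimates, upgrade weak compactness to strong $L^2$ compactness via a discrete Aubin--Simon argument against divergence-free test functions, and pass to the limit in the combined equation \eqref{eq:discretepre} against smooth, compactly supported, divergence-free test functions.

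\textbf{Step 1 (A priori estimates).} I test the prediction step \eqref{eq:semidiscretepre} with $\tilde\bfu_N^{n+1}$. The identity $2a(a-b) = a^2 - b^2 + (a-b)^2$ on the time-derivative term, $\dive \bfu_N^n = 0$ together with \eqref{eq:semidiscretebound} to cancel the convective contribution, and integration by parts on the Laplacian produce a discrete kinetic-energy balance. The correction step \eqref{eq:semidiscretecor} supplies the Pythagorean identity $\|\tilde\bfu_N^{n+1}\|_{L^2}^2 = \|\bfu_N^{n+1}\|_{L^2}^2 + \deltat_{\!N}^2\|\gradi \psi_N^{n+1}\|_{L^2}^2$. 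Using $\gradi p_N^{n+1} - \gradi p_N^n = \gradi \psi_N^{n+1}$ and the orthogonality of $\bfu_N^{n+1}$ to gradients, the remaining pressure cross-term $\int_\Omega \gradi p_N^n \cdot \tilde\bfu_N^{n+1}\dx$ rearranges into a telescoping increment $\tfrac{\deltat_{\!N}^2}{2}(\|\gradi p_N^{n+1}\|_{L^2}^2 - \|\gradi p_N^n\|_{L^2}^2)$, the non-telescoping remainder cancelling the Pythagorean tail. Summing over $n$ and applying Young and discrete Gr\"onwall inequalities yields the three key bounds: $\bfu_N, \tilde\bfu_N$ are bounded in $L^\infty(0,T;L^2(\Omega)^3)$; $\tilde\bfu_N$ is bounded in $L^2(0,T;H_0^1(\Omega)^3)$; and $\tilde\bfu_N - \bfu_N \to 0$ strongly in $L^2((0,T)\times\Omega)^3$ as $\Nti$.

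\textbf{Step 2 (Compactness).} From these bounds I extract subsequences so that $\tilde\bfu_N \rightharpoonup \bar\bfu$ weakly in $L^2(0,T;H_0^1(\Omega)^3)$ and $\bfu_N \overset{\star}{\rightharpoonup} \bar\bfu$ in $L^\infty(0,T;L^2(\Omega)^3)$; since $\bfu_N^n \in \bfV(\Omega)$ the weak limit inherits $\dive \bar\bfu = 0$, so $\bar\bfu \in L^2(0,T; \bfE(\Omega))$. Strong $L^2$-compactness is the crux of the argument: no uniform control on $p_N$ is available, so the discrete time derivative of $\tilde\bfu_N$ can only be controlled in the dual of divergence-free test functions. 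Working from \eqref{eq:discretepre} and testing against a countable dense family of divergence-free $C_c^\infty$ fields eliminates the pressure term and gives, together with the $L^2 H^1_0$ bound, a discrete Aubin--Simon type estimate on time translates. This yields $\tilde\bfu_N \to \bar\bfu$ strongly in $L^2((0,T)\times\Omega)^3$, and by Step~1 also $\bfu_N \to \bar\bfu$ strongly in the same space.

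\textbf{Step 3 (Passage to the limit).} Fix $\bfv \in C_c^\infty(\Omega \times [0,T))^3$ with $\dive \bfv = 0$. I multiply \eqref{eq:discretepre} by a suitable time sample of $\bfv$, integrate over $\Omega$, sum over $n$, and perform a discrete integration by parts in time; the boundary contribution at $n=0$ reproduces $\int_\Omega \bfu_0 \cdot \bfv(0,\cdot)\dx$. The pressure term vanishes because $\dive \bfv = 0$. Weak convergence of $\gradi \tilde\bfu_N$ handles the diffusion term; strong $L^2$ convergence of both $\tilde\bfu_N$ and $\bfu_N$ towards the same limit $\bar\bfu$, combined with the $L^2 H^1_0$ bound on $\tilde\bfu_N$, handles the convective term $\int \tilde\bfu_N \otimes \bfu_N : \gradi \bfv$; the source is linear and passes trivially. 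Hence $\bar\bfu$ satisfies \eqref{weaksol} and is a Leray weak solution in the sense of Definition \ref{def:weaksol}. The main obstacle lies in Step~2: the two-step nature of the splitting prevents any direct bound on the full discrete time derivative of $\tilde\bfu_N$, so the careful bookkeeping of time indices is crucial both to make the pressure telescope exactly in the energy estimate and to derive the compactness estimate using only divergence-free test functions.
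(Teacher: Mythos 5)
Your Steps 1 and 3 follow the paper's route (energy identity with the Pythagorean/telescoping pressure trick, then Lax--Wendroff-type passage to the limit against divergence-free test functions), and they are sound. The genuine gap is in Step 2, which you correctly identify as the crux but then resolve with an argument that does not work as stated. You claim that controlling the discrete time derivative of $\tilde\bfu_N$ in the dual of divergence-free test functions, ``together with the $L^2(0,T;H^1_0)$ bound,'' gives an Aubin--Simon type estimate and hence strong $L^2$ compactness of $\tilde\bfu_N$. The obstruction is that the quantity $|\bfw|_{\ast,1}=\sup\{\int_\Omega \bfw\cdot\bfv\,d\bfx,\ \bfv\in\bfW(\Omega),\ \|\bfv\|_{W^{1,3}_0}=1\}$ is only a \emph{semi-norm} on $L^2(\Omega)^3$: it annihilates every gradient field (if $\bfw=\gradi\xi$ then $\int_\Omega\bfw\cdot\bfv\,d\bfx=0$ for all $\bfv\in\bfW(\Omega)$). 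Consequently the Lions-type interpolation inequality $\|\bfw\|_{L^2}\le\varepsilon\|\bfw\|_{H^1_0}+C_\varepsilon|\bfw|_{\ast,1}$, which is exactly what an Aubin--Simon argument would need to convert your time-translate estimate into an $L^2$ time-translate estimate for $\tilde\bfu_N$, is \emph{false} (test it on a nonzero gradient). Since $\tilde\bfu_N$ is not divergence-free --- only the corrected velocity $\bfu_N$ is --- your compactness step as written does not close.

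The paper's resolution, which is the one missing idea in your proposal, is a two-stage transfer. First one proves the weaker interpolation $|\bfw|_{\ast,0}\le\varepsilon\|\bfw\|_{H^1_0(\Omega)^3}+C_\varepsilon|\bfw|_{\ast,1}$ (Lemma \ref{lem:lions}), where $|\bfw|_{\ast,0}=\|\mathcal{P}_{\bfV(\Omega)}\bfw\|_{L^2}$ is again only a semi-norm; this inequality is true precisely because both sides vanish on gradients, and its proof is by contradiction using the characterization of gradients (Lemma \ref{lem:carac-grad}). This controls only the Leray projection of the time translates of $\tilde\bfu_N$. To recover the full $L^2$ norm one then uses that $\bfu_N(t)\in\bfV(\Omega)$, so that $\|\bfu_N(t+\tau)-\bfu_N(t)\|_{L^2}=|\bfu_N(t+\tau)-\bfu_N(t)|_{\ast,0}$, together with the estimate $\|\bfu_N-\tilde\bfu_N\|_{L^2(L^2)}\le C\,\deltat_{\!N}$ from Step 1, to bounce between $\bfu_N$ and $\tilde\bfu_N$ by the triangle inequality (Lemma \ref{lem:trans-utildesemidis}). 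Without this detour through the corrected, genuinely divergence-free velocity, the pressure-free time-derivative estimate cannot be upgraded to strong $L^2$ compactness. You should also note that your choice of $C^\infty_c$ divergence-free test fields must be replaced by the space $\bfW(\Omega)\subset W^{1,3}_0(\Omega)^3$ so that the convective term $\dive(\tilde\bfu_N^{n+1}\otimes\bfu_N^n)$, with $\bfu_N^n$ merely in $L^2$, can be estimated via the H\"older triple $(2,6,3)$.
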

\begin{proof}
 Here are the main steps of the proof; each step is detailed in one of the following paragraphs.
 \begin{itemize}
  \item \emph{Step 1: first estimates and weak convergence (detailed in section \ref{subsec:first-estim}).} 
    By Lemma \ref{lem:estsemidis} below, we get that there exists $\ctel{cste:est}\in \xR_+$, depending only on $|\Omega|$, $\Vert\bfu_0\Vert_{L^{2}(\Omega)^{3}}$ and $\Vert  \bff\Vert_{L^{2}(\Omega)^{3}}$, such that the sequences   $(\tilde \bfu_N)_{N \ge 1}$ and $(\bfu_N)_{N \ge 1}$ defined by \eqref{eqdef:fullfunctions-semid} satisfy
\begin{align} 
& \sup_{\nstep \ge 1}\|\tilde \bfu_\nstep\|_{L^2(0,T;H^1_0(\Omega)^3)} \leq \cter{cste:est} \quad \mbox{and } \quad \sup_{\nstep \ge 1}\|\bfu_\nstep\|_{L^{\infty}(0,T;L^{2}(\Omega)^{3})} \leq \cter{cste:est},
\label{eq:tildeuu_est} \\
& \sup_{\nstep \ge 1} \| \bfu_\nstep - \tilde \bfu_\nstep \|_{L^2(0,T;L^2(\Omega)^3)}
\le \cter{cste:est} \deltat_{\! N}. 
\label{diffutu}
\end{align} 
 
 Owing to \eqref{eq:tildeuu_est}, there exist some subsequences,  still denoted $(\bfu_\nstep)_{\nstep \ge 1}$ and $(\tilde \bfu_\nstep)_{\nstep \ge 1}$, that converge respectively  $\star$-weakly in $L^\infty(0,T;L^2(\Omega)^3)$ and weakly in $L^2(0,T;H_0^1(\Omega)^3)$.  
  Thanks to \eqref{diffutu}, the subsequences $(\bfu_\nstep)_{\nstep \ge 1}$ and $(\tilde \bfu_\nstep)_{\nstep \ge 1}$  converge to the same limit $\bar \bfu$ weakly in $L^2(0,T;L^2(\Omega)^3)$.
It follows that $\bar \bfu \in L^\infty(0,T;L^2(\Omega)^3) \cap L^2(0,T;H_0^1(\Omega)^3)$, and passing to the limit in the mass balance \eqref{eq:semidiscreteintp} then yields that $\bar \bfu \in L^\infty(0,T;L^2(\Omega)^3) \cap L^2(0,T;\bfE(\Omega))$. 

 \end{itemize}
  There remains to show that $\bar \bfu$ is a weak solution in the sense of Definition \ref{def:weaksol} and in particular that $\bar \bfu $ satisfies ($\ref{weaksol}$).
  Unfortunately, the weak convergence is not sufficient to pass to the limit in the scheme, because of the nonlinear convection term.
  Hence we first need to get some compactness on one of the subsequences (since, by \eqref{diffutu}, their difference tends to 0 in the $L^2$ norm).

\begin{itemize}  

  \item \emph{Step 2: compactness and convergence in $L^2$ (detailed in section \ref{subsec:compactness})} 
  This is the tricky part of the proof. 
  Since the sequence $(\tilde \bfu_\nstep)_{\nstep \ge 1}$ converges weakly in $L^2(0,T;H_0^1(\Omega)^3)$,  some estimate on the discrete time derivative would be sufficient to obtain the convergence in $L^2(0,T;H_0^1(\Omega)^3)$ by a Kolmogorov-like theorem. 
  A difficulty to obtain this estimate arises from the presence of the pressure gradient in Equation \eqref{eq:semidiscretepre}, which needs to be ``killed'' by multiplying this latter equation by a divergence-free function. 
  This function $\bfvarphi$ should also be regular enough so that the nonlinear divergence term makes sense: hence we choose $\bfvarphi \in L^2(0,T;W_0^{1,3}(\Omega)^3)$ such that $\dive \bfvarphi = 0$, and define the following semi-norm on $(L^2(\Omega))^3$:  
\begin{subequations}
 \label{etoile-un}
 \begin{align}
  &|\bfw|_{\ast,1} = \sup\{ \int_\Omega \bfw \cdot \bfv \dx,~ \bfv \in \bfW(\Omega),~ \Vert \bfv \Vert_{W_0^{1,3}(\Omega)^3} =1\},\label{etoile-un-seminorm}
  \\
  & \mbox{with }\bfW(\Omega) = \{\bfvarphi \in W_0^{1,3}(\Omega)^3~:~  \int_\Omega \bfvarphi \cdot \gradi \xi \dx = 0,   \forall  \xi \in H^1(\Omega)\},\label{etoile-un-space} 
\end{align}
\end{subequations}
 Estimates on the $L^2(|\cdot|_{\ast,1})$ semi-norm of the time translates of the predicted velocity $\widetilde \bfu_N$ are then obtained from the semi-discrete momentum equation \eqref{eq:semidiscretepre}: see Lemma \ref{lem:transsemidis}.
 Note that this is only an intermediate result; indeed, in order to gain compactness, we need an estimate on the time translates of the predicted velocity in the $L^2(L^2)$ norm. 
 The idea is then to first introduce the following semi-norm on $(L^2(\Omega))^3$.
\begin{equation}
\label{etoile-zero}
  |\bfv|_{\ast,0} = \sup\{ \int_\Omega \bfv \cdot \bfvarphi \dx,  ~\bfvarphi \in \bfV(\Omega),~\| \bfvarphi \|_{L^2(\Omega)^3}=1 \},
\end{equation}
where $\bfV(\Omega)$ is the space of $L^2$ divergence-free functions defined by \eqref{def:V}.
Note that  
\begin{equation}\label{ast0-PV}
|\bfw|_{\ast,0} = \| \mathcal{P}_{\bfV(\Omega)} \bfw \|_{L^2(\Omega)^3}~\text{for any}~ \bfw \in L^2(\Omega)^3,
\end{equation}
where  $\mathcal{P}_{\bfV(\Omega)}$ is the orthogonal projection operator onto $\bfV(\Omega)$.
 Then, thanks to a Lions-like lemma (Lemma \ref{lem:lions} below), we get that for any $\varepsilon >0$, there exists $C_\varepsilon \in \xR_+$ such that 
\begin{equation}\label{ineq:lions}
     |\bfw|_{\ast,0} \le \varepsilon \Vert \bfw\Vert_{H_0^1(\Omega)^3} + C_\varepsilon |\bfw|_{\ast,1}, \; \forall \bfw \in H^1_0(\Omega)^3.  
\end{equation}
By \eqref{eq:tildeuu_est}, we have an $L^2(0,T;H^1_0(\Omega)^3)$ bound on the predicted velocities; we have also seen that the time translates of $\widetilde \bfu_N$ for the $L^2(|\cdot|_{\ast,1})$ semi-norm tend to 0 as $N \to +\infty$ (Lemma \ref{lem:transsemidis} below).  
Therefore, by \eqref{ineq:lions}, the time translates of $\widetilde \bfu_N$ for the $L^2(|\cdot|_{\ast,0})$ semi-norm also tend to 0 as $N \to +\infty$.
In order to show that the $L^2(L^2)$ norm of the time translates of $\tilde \bfu_N$ tend to 0, we remark that if $\bfv \in \bfV(\Omega)$, then $|\bfv|_{\ast,0} = ||\bfv||_{L^2(\Omega)}$ and conclude thanks to \eqref{diffutu}, see Lemma \ref{lem:trans-utildesemidis}).

\smallskip
  
\item \emph{Step 3: convergence towards the weak solution (detailed in section \ref{subsec:passlim})} 
  Owing to a Kolmogorov-type theorem (see e.g. \cite[Corollary 4.41]{gh-edp}), the estimates of steps 1 and 2 yield that there exist subsequences, still denoted $(\bfu_\nstep)_{\nstep \ge 1}$ and $(\tilde \bfu_\nstep)_{\nstep \ge 1}$, that converge to  $\bar \bfu$ in $L^2(0,T;L^2(\Omega)^3)$.
  
 In section \ref{subsec:passlim}, we pass to the limit in the scheme to obtain that $\bar \bfu $ satisfies ($\ref{weaksol}$); therefore $\bar \bfu$ is a weak solution to \eqref{pb:cont} in the sense of Definition \ref{def:weaksol}.
  \end{itemize}
 \end{proof} 

\begin{remark}[Uniqueness and convergence of the whole sequence] \label{rem-uniq-instat}
	In the case where uniqueness of the solution is known, then the whole sequence converges ; this is for instance the case in the 2D case \cite{leray1934}, see e.g.  \cite[Chapter 5, Section 1.3]{BoyerFabrie-book} for more on this subject.
\end{remark}

\subsection{Proof of step 1: energy estimates and weak convergence} \label{subsec:first-estim}

\begin{lemma}[Energy estimates]\label{lem:estsemidis}  
Under the assumptions \eqref{hyp:T-Omega}, \eqref{hyp:f-u0} and \eqref{hyp:timedisc}, the functions
 $\bfu_\nstep$ and $\tilde \bfu_\nstep$ defined by \eqref{eqdef:fullfunctions-semid} satisfy \eqref{eq:tildeuu_est}  and \eqref{diffutu}, with $\cter{cste:est}$ depending only on $|\Omega|$, $\Vert\bfu_0\Vert_{L^{2}(\Omega)^{3}}$ and $\Vert  \bff\Vert_{L^{2}(\Omega)^{3}}$.
\end{lemma}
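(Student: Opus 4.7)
The approach is the standard energy method for projection schemes. First I would take the $L^2(\Omega)^3$ inner product of the prediction equation \eqref{eq:semidiscretepre} with $\tilde{\bfu}_N^{n+1}$, and apply the identity $(a-b,a)=\tfrac12(\|a\|^2-\|b\|^2+\|a-b\|^2)$ on the discrete time derivative. The convection term vanishes: integrating by parts and using $\tilde{\bfu}_N^{n+1}=0$ on $\partial\Omega$, it reduces to $-\int_\Omega(\bfu_N^n\cdot\gradi)\bigl(\tfrac12|\tilde{\bfu}_N^{n+1}|^2\bigr)\dx$, which is $0$ by the defining property of $\bfV(\Omega)$ applied to $\xi=\tfrac12|\tilde{\bfu}_N^{n+1}|^2$ (modulo the regularity subtlety discussed below).

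To handle the pressure coupling $\int_\Omega\gradi p_N^n\cdot\tilde{\bfu}_N^{n+1}\dx$, I would substitute $\tilde{\bfu}_N^{n+1}=\bfu_N^{n+1}+\deltat_{\! N}\gradi\psi_N^{n+1}$ from the correction step. Since $\bfu_N^{n+1}\in\bfV(\Omega)$ and $p_N^n\in H^1(\Omega)$, the term $\int_\Omega\gradi p_N^n\cdot\bfu_N^{n+1}\dx$ vanishes, leaving $\deltat_{\! N}\int_\Omega\gradi p_N^n\cdot\gradi\psi_N^{n+1}\dx$, which I convert to a telescoping contribution by writing $\|\gradi p_N^{n+1}\|^2=\|\gradi p_N^n+\gradi\psi_N^{n+1}\|^2$. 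Combined with the Pythagorean identity $\|\tilde{\bfu}_N^{n+1}\|^2=\|\bfu_N^{n+1}\|^2+\|\tilde{\bfu}_N^{n+1}-\bfu_N^{n+1}\|^2$ (from $\bfu_N^{n+1}\perp\gradi\psi_N^{n+1}$ in $L^2$) and the identity $\deltat_{\! N}^2\|\gradi\psi_N^{n+1}\|^2=\|\tilde{\bfu}_N^{n+1}-\bfu_N^{n+1}\|^2$, the two $\|\tilde{\bfu}_N^{n+1}-\bfu_N^{n+1}\|^2$ contributions cancel and I should reach the discrete energy balance
\begin{equation*}
\frac{1}{2\deltat_{\! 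N}}\Bigl(\|\bfu_N^{n+1}\|^2-\|\bfu_N^n\|^2+\|\tilde{\bfu}_N^{n+1}-\bfu_N^n\|^2\Bigr)+\frac{\deltat_{\! N}}{2}\bigl(\|\gradi p_N^{n+1}\|^2-\|\gradi p_N^n\|^2\bigr)+\|\gradi\tilde{\bfu}_N^{n+1}\|^2=\int_\Omega\bff_{\! N}^{n+1}\cdot\tilde{\bfu}_N^{n+1}\dx.
\end{equation*}

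Summing over $n\in\llbracket 0,N-1\rrbracket$, absorbing the source term via Cauchy--Schwarz, Poincar\'e's inequality on $\tilde{\bfu}_N^{n+1}$, and Young's inequality, and using $p_N^0=0$ together with $\bfu_N^0=\bfu_0\in\bfE(\Omega)$, I would obtain simultaneously upper bounds on $\sup_n\|\bfu_N^n\|_{L^2}$, on $\deltat_{\! N}\sum_n\|\gradi\tilde{\bfu}_N^{n+1}\|^2$, and on $\sum_n\|\tilde{\bfu}_N^{n+1}-\bfu_N^n\|^2$, all depending only on $|\Omega|$, $\Vert\bfu_0\Vert_{L^2(\Omega)^3}$ and $\Vert\bff\Vert_{L^2((0,T)\times\Omega)^3}$. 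The first two deliver \eqref{eq:tildeuu_est} (via Poincar\'e for the $L^2(H_0^1)$ bound), while the third combined with the definition \eqref{eqdef:fullfunctions-semid} controls $\|\bfu_N-\tilde{\bfu}_N\|_{L^2(L^2)}^2=\deltat_{\! N}\sum_n\|\tilde{\bfu}_N^{n+1}-\bfu_N^n\|^2$, which yields \eqref{diffutu}.

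The main technical point I anticipate lies in justifying the vanishing of the convection term, because the test function $\xi=\tfrac12|\tilde{\bfu}_N^{n+1}|^2$ only belongs to $W^{1,3/2}(\Omega)$ in three dimensions (through $\tilde{\bfu}_N^{n+1}\in H_0^1\subset L^6$), not strictly to $H^1(\Omega)$ as required by the literal definition of $\bfV(\Omega)$. This can be overcome either by extending the orthogonality property of $\bfV(\Omega)$ by density to suitable $W^{1,q}$ test functions (which is admissible since $\bfu_N^n$ has vanishing normal trace in a weak sense), or by recasting the convection term in its skew-symmetric form $\tfrac12\bigl[\dive(\tilde{\bfu}_N^{n+1}\otimes\bfu_N^n)+(\bfu_N^n\cdot\gradi)\tilde{\bfu}_N^{n+1}\bigr]$ before testing. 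Once this point is settled, the remainder is routine telescoping.
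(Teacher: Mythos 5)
Your telescoping algebra is correct and coincides with the paper's: substituting $\tilde\bfu_N^{n+1}=\bfu_N^{n+1}+\deltat_{\! N}\gradi\psi_N^{n+1}$, using the Pythagorean identity and expanding $\Vert\gradi p_N^{n+1}\Vert_{L^2(\Omega)^3}^2$ is exactly equivalent to the paper's device of squaring the correction relation \eqref{eq:semidiscretecor}, integrating and adding the result to the prediction estimate; the summation with Cauchy--Schwarz, Poincar\'e and Young, and the identification $\Vert\bfu_N-\tilde\bfu_N\Vert_{L^2(0,T;L^2(\Omega)^3)}^2=\deltat_{\! N}\sum_n\Vert\tilde\bfu_N^{n+1}-\bfu_N^n\Vert_{L^2(\Omega)^3}^2$ are also as in the paper.

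The gap is in your first step and in both remedies you propose for it. For $n\ge 1$ the convecting field $\bfu_N^n$ belongs only to $\bfV(\Omega)\subset L^2(\Omega)^3$ (the correction step provides no gradient bound on the corrected velocity), so the trilinear term $\int_\Omega\tilde\bfu_N^{n+1}\otimes\bfu_N^n:\gradi\tilde\bfu_N^{n+1}\dx$ is a product of factors in $L^6$, $L^2$ and $L^2$ with $\tfrac16+\tfrac12+\tfrac12>1$: it need not be absolutely convergent, so you cannot take $\tilde\bfu_N^{n+1}$ as a test function in \eqref{eq:semi-weakpred} at all. This same non-integrability defeats both fixes: the pairing $\int_\Omega\bfu_N^n\cdot\gradi\xi\dx$ with $\xi=\tfrac12|\tilde\bfu_N^{n+1}|^2\in W^{1,3/2}(\Omega)$ is not defined when $\bfu_N^n$ is merely $L^2$, and each half of the skew-symmetric form suffers from the same problem (besides, the scheme is written with $\dive(\tilde\bfu_N^{n+1}\otimes\bfu_N^n)$, not its skew-symmetrization, so the recasting would itself need justification). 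The paper's resolution is different: the energy inequality \eqref{eq:estim_semid_pred} is not obtained by testing the equation satisfied by $\tilde\bfu_N^{n+1}$, but is built into the existence result, Lemma \ref{lem:existpre}. There one approximates $\bfu_N^n$ by smooth divergence-free fields in $\bfV(\Omega)\cap C_c^1(\Omega)^3$ (density Lemma \ref{lem:denV}), for which the convection form is continuous on $H_0^1(\Omega)^3\times H_0^1(\Omega)^3$ and skew-symmetric (identity \eqref{skew}), derives the energy identity for each approximation, and passes to the limit using weak lower semicontinuity of the $H_0^1$ norm --- which is also why \eqref{eq:estim_semid_pred} is an inequality rather than the equality you write. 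To make your argument rigorous, route the first step through Lemma \ref{lem:existpre} (or reproduce its regularization of the convecting field) instead of testing with $\tilde\bfu_N^{n+1}$.
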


\begin{proof}

Noting that $\tilde \bfu_\nstep$ satisfies \eqref{eq:semi-weakpred} and using Lemma \ref{lem:existpre} with $\alpha = \frac{1}{\deltat_{\! N}}$, we have for $ n \in \llbracket 0,N-1 \rrbracket$
\begin{multline}
 \label{eq:estim_semid_pred}
\frac 1 {2\deltat_{\! N}} \Vert \tilde \bfu_\nstep^{\nalgo+1}\Vert_{L^2(\Omega)^3}^2 -\frac 1 {2\deltat_{\! N}}  \Vert \bfu_\nstep^\nalgo\Vert_{L^2(\Omega)^3}^2  +  \frac 1 {2\deltat_{\! N}} \Vert \tilde \bfu_\nstep^{\nalgo+1}- \bfu_\nstep^\nalgo\Vert_{L^2(\Omega)^3}^2   \\ + \int_\Omega \nabla p_\nstep^\nalgo \cdot \tilde \bfu_\nstep^{\nalgo+1} \dx + \Vert   \tilde \bfu_\nstep^{\nalgo+1}\Vert_{H^1_0(\Omega)^3}^2  \le \int_\Omega \bff_{\! N}^{n+1} \cdot \tilde \bfu_\nstep^{\nalgo+1} \dx. 
\end{multline}

Squaring the relation \eqref{eq:semidiscretecor}, integrating over $\Omega$,  multiplying by $\frac {\deltat_{\! N}}{ 2}$ and owing to 
$\bfu_N^{n+1} \in \bfV(\Omega)$, we get that for $ n \in \llbracket 0,N-1 \rrbracket$
\begin{multline*}
  \frac 1 {2 \deltat_{\! N}} \Vert \bfu_\nstep^{\nalgo+1}\Vert_{L^2(\Omega)^3}^2 + \frac{\deltat_{\! N}}{2} \|\nabla p_\nstep^{n+1}\|_{L^2(\Omega)^3}^2 \dx = \frac 1 {2 \deltat_{\! N}}   \|\tilde\bfu_\nstep^{\nalgo+1}\|_{L^2(\Omega)^3}^2 \\ + \frac{\deltat_{\! N}}{2}  \|\nabla p_\nstep^{n}\|_{L^2(\Omega)^3}^2 - \int_\Omega  \tilde\bfu_\nstep^{\nalgo+1} \cdot \nabla p_\nstep^\nalgo \dx.
\end{multline*}
Summing this latter relation with \eqref{eq:estim_semid_pred} yields for $n \in \llbracket 0,N-1 \rrbracket$
\begin{multline*}
  \frac 1 {2 \deltat_{\! N}} \left(\Vert\bfu_\nstep^{\nalgo+1}\Vert_{L^2(\Omega)^3}^2 - \Vert\bfu_\nstep^{\nalgo}\Vert_{L^2(\Omega)^3}^2\right)+ \frac{\deltat_{\! N}}{2} \left(\Vert\nabla p_N^{n+1}\Vert_{L^2(\Omega)^3}^2 - \Vert\nabla p_\nstep^{n}\Vert_{L^2(\Omega)^3}^2\right) \\ +   \frac{1}{2 \deltat_{\! N}} \Vert\tilde\bfu_\nstep^{\nalgo+1} - \bfu_\nstep^{\nalgo}\Vert_{L^2(\Omega)^3}^2 + \Vert  \tilde\bfu_\nstep^{\nalgo+1}\Vert_{H^1_0(\Omega)^3}^2  \le \int_\Omega \bff_{\! N}^{n+1} \cdot \tilde \bfu_N^{\nalgo+1} \dx.
\end{multline*}
We then get Relations \eqref{eq:tildeuu_est} by summing over the time steps,  using the Cauchy-Schwarz and Poincar\'e inequalities.
\end{proof}

\subsection{Proof of step 2: compactness and $L^2$ congergence} \label{subsec:compactness}

Following Step 2 of the sketch of proof of Theorem \ref{theo:conv-semid}, we start by the following lemma.
\begin{lemma}[A first estimate on the time translates]\label{lem:transsemidis}
Under the assumptions of Theorem \ref{theo:conv-semid}, there exists $\ctel{cste:esttrans1}$ only depending on $|\Omega|$, $\Vert\bfu_0\Vert_{(L^2(\Omega))^3}$ and $\Vert\bff\Vert_{(L^2(\Omega))^3}$such that for any $N \ge 1$ and for any $\tau \in (0,T)$, 
\begin{equation*}
\int_0^{T-\tau} |\tilde  \bfu_\nstep(t+\tau) -  \tilde \bfu_\nstep(t) |_{\ast,1}^2 \dt \le \cter{cste:esttrans1} \tau (\tau +\deltat_{\! N}), 
\end{equation*}
where $|\cdot|_{\ast,1}$ is the semi-norm defined by \eqref{etoile-un}.
\end{lemma}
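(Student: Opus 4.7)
The plan is to pass to a dual norm by testing \eqref{eq:discretepre} against a divergence-free function $\bfvarphi\in\bfW(\Omega)$ normalized by $\|\bfvarphi\|_{W_0^{1,3}(\Omega)^3}=1$. Since such $\bfvarphi$ is $L^2$-divergence free with zero trace, the pressure gradient term cancels out, leaving, for $n\in\llbracket 1,N-1\rrbracket$,
\begin{align*}
\int_\Omega(\tilde\bfu_N^{n+1}-\tilde\bfu_N^n)\cdot\bfvarphi\dx = \deltat_{\! N}\Big[ & \int_\Omega \tilde\bfu_N^{n+1}\otimes\bfu_N^n:\gradi\bfvarphi\dx -\int_\Omega\gradi\tilde\bfu_N^{n+1}:\gradi\bfvarphi\dx \\
& + \int_\Omega \bff_N^{n+1}\cdot\bfvarphi\dx\Big].
\end{align*}
H\"older combined with the Sobolev embedding $H^1_0(\Omega)\hookrightarrow L^6(\Omega)$ (and $W^{1,3}_0(\Omega)\hookrightarrow L^q(\Omega)$ for any $q<\infty$ on the bounded $\Omega$ to absorb the diffusion and source contributions) then yield, after taking the supremum over admissible $\bfvarphi$, the per-step bound
\[
|\tilde\bfu_N^{n+1}-\tilde\bfu_N^n|_{\ast,1}\le C\,\deltat_{\! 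N}\,a_n, \qquad a_n := \|\tilde\bfu_N^{n+1}\|_{H^1_0(\Omega)^3}\bigl(1+\|\bfu_N^n\|_{L^2(\Omega)^3}\bigr)+\|\bff_N^{n+1}\|_{L^2(\Omega)^3}.
\]

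Next, I would write the time translate as a telescoping sum. Letting $n(t)$ and $m(t)$ denote the indices with $t\in(t^{n(t)}_N,t^{n(t)+1}_N]$ and $t+\tau\in(t^{m(t)}_N,t^{m(t)+1}_N]$, one has $\tilde\bfu_N(t+\tau)-\tilde\bfu_N(t)=\sum_{j=n(t)+1}^{m(t)}(\tilde\bfu_N^{j+1}-\tilde\bfu_N^j)$, a sum of at most $(\tau+\deltat_{\! N})/\deltat_{\! N}$ terms, all of them with $j\ge 1$ so that \eqref{eq:discretepre} indeed applies. Cauchy--Schwarz on the sum gives
\[
|\tilde\bfu_N(t+\tau)-\tilde\bfu_N(t)|_{\ast,1}^2\le C\,(\tau+\deltat_{\! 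N})\,\deltat_{\! N}\sum_{j=n(t)+1}^{m(t)}a_j^2.
\]

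To conclude, I would integrate over $t\in(0,T-\tau)$ and swap the sums via Fubini, noting that a given index $j$ contributes only for $t\in(t^j_N-\tau,t^j_N]$, a set of measure at most $\tau$. This produces $\int_0^{T-\tau}|\tilde\bfu_N(t+\tau)-\tilde\bfu_N(t)|_{\ast,1}^2\dt\le C\,\tau\,(\tau+\deltat_{\! N})\,\deltat_{\! N}\sum_j a_j^2$, and the remaining factor $\deltat_{\! N}\sum_j a_j^2$ is dominated by a constant of the announced form thanks to the $L^\infty(L^2)$ bound on $\bfu_N$ and the $L^2(H^1_0)$ bound on $\tilde\bfu_N$ provided by \eqref{eq:tildeuu_est}, together with $\|\bff_N\|_{L^2(0,T;L^2(\Omega)^3)}\le\|\bff\|_{L^2((0,T)\times\Omega)^3}$.

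The main difficulty lies upstream, in the very choice of the seminorm $|\cdot|_{\ast,1}$. The test function must simultaneously be $L^2$-divergence-free (to eliminate the pressure gradient, whose discrete time increment admits no direct uniform control at this stage) and enjoy enough spatial regularity to absorb the trilinear convection term using only the available \emph{a priori} estimates. In three space dimensions the borderline choice $W^{1,3}_0$ is exactly dictated by the splitting $\int \tilde\bfu\otimes\bfu:\gradi\bfvarphi\le\|\tilde\bfu\|_{L^6}\|\bfu\|_{L^2}\|\gradi\bfvarphi\|_{L^3}$, which pairs the Sobolev-embedded $L^2(H^1_0)$ control of $\tilde\bfu_N$ with the $L^\infty(L^2)$ control of $\bfu_N$; any weaker regularity on $\bfvarphi$ would break this chain and compromise the subsequent compactness argument based on \eqref{ineq:lions}.
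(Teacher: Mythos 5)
Your proposal is correct and follows essentially the same route as the paper: test \eqref{eq:discretepre} against $\bfvarphi\in\bfW(\Omega)$ so the pressure term vanishes, bound the convection term via the H\"older splitting $L^6\times L^2\times L^3$ together with the energy estimates \eqref{eq:tildeuu_est}, telescope the time translate over the at most $(\tau+\deltat_{\! N})/\deltat_{\! N}$ crossed increments (your index bookkeeping with $n(t)$, $m(t)$ is just the paper's $\chi_{N,\tau}^n$ in disguise), apply Cauchy--Schwarz, and integrate in $t$ using that each index contributes on a set of measure at most $\tau$. The only cosmetic difference is that you keep the factor $(1+\Vert\bfu_N^n\Vert_{L^2(\Omega)^3})$ inside $a_n$ while the paper absorbs it into the constant via the uniform $L^\infty(L^2)$ bound.
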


\begin{proof}
Let $ N \ge 2$ and $\tau \in (0,T)$ (for $N=1$ the quantity we have to estimate is zero).  
Let $(\chi^n_{N,\tau})_{n \in \llbracket 1,N-1 \rrbracket}$  be the family of measurable functions defined for $n \in \llbracket 1,N-1 \rrbracket$ and $t \in \xR$ by $ \chi_{N,\tau}^n(t) =  \mathds 1_{ (t_N^n-\tau,t^n_N]}(t) $, then 
\begin{equation}
 \label{eqdef:chi-N-tau}
 \tilde \bfu_N(t+\tau)  -\tilde \bfu_N(t)  = \sum_{n=1}^{N-1} \chi_{N,\tau}^n(t) (\tilde \bfu_N^{n+1} - \tilde \bfu_N^n), \forall t \in (0,T-\tau).
\end{equation}

Hence, owing to \eqref{eq:discretepre},  
\begin{multline*}
\tilde \bfu_N(t+\tau) - \tilde \bfu_N(t) = \deltat_{\! N} \!\! \sum_{n=1}^{N-1} \!\!   \chi_{N,\tau}^n(t)\Delta \tilde \bfu_N^{n+1}  - \deltat_{\! N} \sum_{n=1}^{N-1} \!\!   \chi_{N,\tau}^n(t) \dive(\tilde  \bfu_N^{n+1}\otimes \bfu_N^{n}) \\
 - \deltat_{\! N} \sum_{n=1}^{N-1} \!\!   \chi_{N,\tau}^n(t) \gradi (2 p_N^{n} - p_N^{n-1}) + \deltat_{\! N} \sum_{n=1}^{N-1} \!\!   \chi_{N,\tau}^n(t) \bff_{\! N}^{n+1}.
\end{multline*}
Let $\bfvarphi  \in \bfW(\Omega)$ and $A(t)= \displaystyle\int_\Omega  \bigl( \tilde\bfu_N(t+\tau) - \tilde \bfu_N(t)\bigr) \cdot \bfvarphi \dx,$ so that
\begin{align*} 
& A(t)= A_{d}(t) +  A_{c}(t) + A_{p}(t) + A_{\bff}(t) \mbox{ with }\\
&
A_{d}(t) = -\sum_{n=1}^{N-1}\chi_{N,\tau}^n(t)\deltat_{\! N} \int_\Omega \gradi \tilde{\bfu}_N^{n+1} : \gradi \bfvarphi \dx,
\\ &
A_{c}(t) =\sum_{n=1}^{N-1}\chi_{N,\tau}^n(t)\deltat_{\! N} \int_\Omega \tilde{\bfu}_N^{n+1}  \otimes \bfu_N^{n} : \gradi \bfvarphi \dx,
\\ &
A_{p}(t) = \sum_{n=1}^{N-1}    \chi_{N,\tau}^n(t) \deltat_{\! N} \int_\Omega  (2 p_N^{n} - p_N^{n-1}) \dive \bfvarphi \dx,
\\ &
A_{\bff}(t) =  \sum_{n=1}^{N-1}    \chi_{N,\tau}^n(t)\deltat_{\! N} \int_\Omega \bff_{\! N}^{n+1}  \cdot  \bfvarphi \dx.
\end{align*}
By the H\"older inequality,
\begin{equation}\label{eq:est_Adsemi}
A_{d}(t) \le 
|\Omega|^{1/6} \| \bfvarphi \|_{W^{1,3}_0(\Omega)^3} \sum_{n=1}^{N-1}   \chi_{N,\tau}^n(t) \deltat_{\! N} \| \tilde{\bfu}_N^{n+1}  \|_{H_0^1(\Omega)^3}.
\end{equation}
Since $H^1_0(\Omega) \subset L^6(\Omega)$, using H\"older's inequality with exponents 2, 6 and 3 ($\frac 1 2 + \frac 1 6 + \frac 1 3 = 1$), we get, thanks to the bounds \eqref{eq:tildeuu_est} on $\widetilde \bfu_\nstep$ and $\bfu_\nstep$,
\begin{multline}\label{eq:est_Acsemi}
A_{c}(t)  \le \sum_{n=1}^{N-1}   \chi_{N,\tau}^n(t)  \deltat_{\! N}  \|\bfu_N^{n} \|_{L^2(\Omega)^3} \|  \tilde{\bfu}_N^{n+1} \|_{L^6(\Omega)^3} \|  \bfvarphi \|_{W^{1,3}_0(\Omega)^3} \\
\le \cter{cste:est} C_{{\rm sob}}^{(2,6)} \| \bfvarphi \|_{W_0^{1,3}(\Omega)^3}\sum_{n=1}^{N-1}  \chi_{N,\tau}^n(t) \deltat_{\! N}\| \tilde{\bfu}_N^{n+1}  \|_{H_0^1(\Omega)^3},
\end{multline}
 where  $C_{{\rm sob}}^{(2,6)}\in \xR_+$, depending only on $|\Omega|$, is such that
\begin{equation*}
\| \bfv \|_{L^6(\Omega)^3} \le C_{{\rm sob}}^{(2,6)} \| \bfv \|_{H_0^1(\Omega)^3},\mbox{ for any } \bfv \in H^1_0(\Omega)^3.
\end{equation*}
Since $\dive \bfvarphi =0$, clearly $A_p(t) =0$. 
Next, we note that
\begin{equation}\label{eq:est_Afsemi}
A_{\bff}(t) \le C_{{\rm sob}}^{(3,3)} |\Omega|^{1/6}\| \bfvarphi \|_{W_0^{1,3}(\Omega)^3} \sum_{n=1}^{N-1}    \chi_{N,\tau}^n(t)\deltat_{\! N} \|\bff_{\! N}^{n+1}\|_{L^2(\Omega)^3},
\end{equation}
 where $C_{{\rm sob}}^{(3,3)} \in \xR_+$, depending only on $|\Omega|$, is such that
\begin{equation*}
\| \bfvarphi \|_{L^3(\Omega)^3} \le C_{{\rm sob}}^{(3,3)} \| \bfvarphi \|_{W_0^{1,3}(\Omega)^3},\mbox{ for any } \bfvarphi \in W^{1,3}_0(\Omega)^3.
\end{equation*}
Summing Equations \eqref{eq:est_Adsemi}, \eqref{eq:est_Acsemi}, \eqref{eq:est_Afsemi}, we obtain
$$
A(t) \le C  \| \bfvarphi \|_{W_0^{1,3}(\Omega)^3}  \sum_{n=1}^{N-1}   \chi_{N,\tau}^n(t) \deltat_{\! N}( \| \tilde \bfu_N^{n+1} \|_{H_0^1(\Omega)^3} + \| \bff_{\! N}^{n+1} \|_{L^2(\Omega)^3})
$$
where $C = |\Omega|^{1/6} +C_{{\rm sob}}^{(3,3)}|\Omega|^{1/6}  +\cter{cste:est} C_{{\rm sob}}^{(2,6)}$.
This implies 
$$
|\tilde  \bfu_N(t+\tau) -  \tilde \bfu_N(t) |_{\ast,1} \le C  \sum_{n=1}^{N-1}  \chi_{N,\tau}^n(t) \deltat_{\! N} ( \| \tilde \bfu_N^{n+1} \|_{H_0^1(\Omega)^3} + \| \bff_{\! N}^{n+1} \|_{L^2(\Omega)^3}).
$$
Since $\sum_{n=1}^{N-1}  \chi_{N,\tau}^n(t) \deltat_{\! N} \le \tau +\deltat_{\! N} $ for any $t \in (0,T-\tau)$ we then obtain
$$
|\tilde  \bfu(t+\tau) -  \tilde \bfu(t) |_{\ast,1}^2 \le 2 C^2 (\tau +\deltat_{\! N})  \sum_{n=1}^{N-1} \chi_{N,\tau}^n(t)\deltat_{\! N} ( \| \tilde \bfu_N^{n+1} \|_{H_0^1(\Omega)^3}^2 + \| \bff_{\! N}^{n+1} \|_{L^2(\Omega)^3}^2).
$$
Noting that  $\int_0^{T-\tau} \chi_{N,\tau}^n(t) \dt \le \tau $ for any $n \in \llbracket 1,N-1 \rrbracket$ yields
\begin{multline*}
\int_0^{T-\tau} |\tilde  \bfu_N(t+\tau) -  \tilde \bfu_N(t) |_{\ast,1}^2 \dt  \\  \le 2 C^2 (\tau +\deltat_{\! N}) \sum_{n=1}^{N-1}  \deltat_{\! N} ( \| \tilde \bfu_N^{n+1} \|_{H_0^1(\Omega)^3}^2 + \| \bff_{\! N}^{n+1} \|_{L^2(\Omega)^3}^2) \int_0^{T-\tau}  \chi_{N,\tau}^n(t) \dt \\
\le   2 C^2 (\tau +\deltat_{\! N}) \tau ( \| \tilde \bfu_N \|_{L^2(0,T:H_0^1(\Omega)^3)}^2 + \| \bff \|_{L^2((0,T) \times \Omega)^3}^2) 
\le \cter{cste:esttrans1} \tau (\tau +\deltat_{\! N})
\end{multline*}
which gives the expected result.
\end{proof}

\begin{lemma}[Lions-like]\label{lem:lions}
Let $\Omega$ be an open bounded connected subset of $\xR^3$ with a Lipschitz boundary.
For any $\varepsilon >0$, there exists $C_\varepsilon >0$ such that \eqref{ineq:lions} holds for any $\bfw \in H^1_0(\Omega)^3$.
\end{lemma}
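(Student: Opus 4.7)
The natural strategy is a compactness argument by contradiction, exploiting the compact embedding $H^1_0(\Omega) \hookrightarrow L^2(\Omega)$ to upgrade weak convergence of a normalized minimizing sequence and then to reach a contradiction by showing that its $L^2$-limit must project to zero on $\bfV(\Omega)$.

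More precisely, the plan is to suppose that the conclusion fails. Then there exists $\varepsilon_0 > 0$ and a sequence $(\bfw_n)_\nnn \subset H^1_0(\Omega)^3$ with
\[
|\bfw_n|_{\ast,0} > \varepsilon_0 \|\bfw_n\|_{H^1_0(\Omega)^3} + n\, |\bfw_n|_{\ast,1}.
\]
By the homogeneity of all three quantities, we may normalize so that $|\bfw_n|_{\ast,0} = 1$, which yields $\|\bfw_n\|_{H^1_0(\Omega)^3} \le 1/\varepsilon_0$ and $|\bfw_n|_{\ast,1} \le 1/n$. The Rellich--Kondrachov theorem then provides a subsequence (still indexed by $n$) and some $\bfw \in H^1_0(\Omega)^3$ such that $\bfw_n \rightharpoonup \bfw$ weakly in $H^1_0(\Omega)^3$ and $\bfw_n \to \bfw$ strongly in $L^2(\Omega)^3$.

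The next step is to identify the limit in two different ways. On the one hand, the $L^2$-continuity of the orthogonal projection together with the characterization \eqref{ast0-PV} gives
\[
|\bfw|_{\ast,0} = \|\mathcal{P}_{\bfV(\Omega)} \bfw\|_{L^2(\Omega)^3} = \lim_{\nti} \|\mathcal{P}_{\bfV(\Omega)} \bfw_n\|_{L^2(\Omega)^3} = 1.
\]
On the other hand, for every fixed $\bfvarphi \in \bfW(\Omega)$ with $\|\bfvarphi\|_{W_0^{1,3}(\Omega)^3} = 1$, the definition \eqref{etoile-un-seminorm} yields $\bigl|\int_\Omega \bfw_n \cdot \bfvarphi \dx\bigr| \le |\bfw_n|_{\ast,1} \to 0$, and since $\bfw_n \to \bfw$ in $L^2$, we obtain $\int_\Omega \bfw \cdot \bfvarphi \dx = 0$. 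By homogeneity this equality extends to every $\bfvarphi \in \bfW(\Omega)$.

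To conclude, one invokes the classical density property: $C^\infty_c(\Omega)^3$ divergence-free functions are dense in $\bfV(\Omega)$ for the $L^2$-norm (see e.g.\ Temam's book), and they belong to $\bfW(\Omega)$; therefore $\bfW(\Omega)$ is dense in $\bfV(\Omega)$ in $L^2(\Omega)^3$. Hence $\int_\Omega \bfw \cdot \bfv \dx = 0$ for every $\bfv \in \bfV(\Omega)$, so $\mathcal{P}_{\bfV(\Omega)} \bfw = 0$ and $|\bfw|_{\ast,0} = 0$, contradicting the previous identification. The main (and essentially only) delicate point of the argument is this density step, which requires the classical lifting/regularization construction for divergence-free vector fields; once it is available, the rest of the proof is the standard Lions compactness-uniqueness scheme.
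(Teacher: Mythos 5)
Your proof is correct and follows the same compactness--contradiction scheme as the paper: negate the inequality, normalize so that $|\bfw_n|_{\ast,0}=1$, use the $H^1_0$ bound and Rellich to extract an $L^2$-convergent subsequence, pass to the limit in $\int_\Omega \bfw_n\cdot\bfvarphi\dx \le |\bfw_n|_{\ast,1}\Vert\bfvarphi\Vert_{W^{1,3}_0(\Omega)^3}$ for $\bfvarphi\in\bfW(\Omega)$, and contradict $\|\mathcal{P}_{\bfV(\Omega)}\bfw\|_{L^2(\Omega)^3}=1$. The only point where you diverge is the very last step: you invoke the density of smooth divergence-free fields in $\bfV(\Omega)$ to conclude directly that $\mathcal{P}_{\bfV(\Omega)}\bfw=0$, whereas the paper instead applies its gradient-characterization result (Lemma \ref{lem:carac-grad}, based on the Bogovskii operator) to write $\mathcal{P}_{\bfV(\Omega)}\bfw=\gradi\xi$ and then kills it by orthogonality within $\bfV(\Omega)$. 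These two conclusions are interchangeable --- the density statement you use is exactly the paper's Lemma \ref{lem:denV}, which the paper itself derives from Lemma \ref{lem:carac-grad} --- so you are not avoiding the ``delicate point'' you flag, merely repackaging it; but your step from $\bfw\perp\bfW(\Omega)$ to $\bfw\perp\bfV(\Omega)$ is legitimate since $\mathcal{V}(\Omega)\subset\bfW(\Omega)$ and $\mathcal{V}(\Omega)$ is $L^2$-dense in $\bfV(\Omega)$.
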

\begin{proof}
Let $\varepsilon >0$; let us show by contradiction that there exists $C_\varepsilon >0$ such that for any $\bfw \in H_0^1(\Omega)^3$
\begin{equation*}
    |\bfw|_{\ast,0}  \le \varepsilon \Vert \bfw \Vert_{H_0^1(\Omega)^3} + C_\varepsilon |\bfw|_{\ast,1}. 
    \label{eq:f}
\end{equation*}
Suppose that this is not so, then there exists $ \varepsilon >0$ and a sequence $(\bfw_n)_{n \ge 0}$ of functions of $H_0^1(\Omega)^3$, such that,thanks to \eqref{ast0-PV},
\begin{equation*}
  \|  \mathcal{P}_{\bfV(\Omega)} \bfw_n \|_{L^2(\Omega)^3} =   |\bfw_n|_{\ast,0} > \varepsilon \Vert \bfw_n \Vert_{H_0^1(\Omega)^3} + n |\bfw_n|_{\ast,1}. 
\end{equation*}
By a homogeneity argument, we may choose $\Vert \mathcal{P}_{\bfV(\Omega)} \bfw_n\Vert_{L^2(\Omega)^3} = 1$; it then follows from the latter inequality that the sequence $(\bfw_n)_{n \ge 0}$ is bounded in $H_0^1(\Omega)^3$ and that $|\bfw_n|_{\ast,1} \to 0$ as $\nti$.
This implies that as $\nti$,  up to a subsequence, $ (\bfw_n)_{n \ge 0} $ converges  in $L^2(\Omega)^3$ to $\bfw \in H_0^1(\Omega)^3$.
The continuity of the Leray projection $\mathcal{P}_{\bfV(\Omega)}$ implies that 
$ \mathcal{P}_{\bfV(\Omega)} \bfw_{n} \to \mathcal{P}_{\bfV(\Omega)}  \bfw$ in  $L^2(\Omega)^3$ and in particular $\|\mathcal{P}_{\bfV(\Omega)}  \bfw \|_{L^2(\Omega)^3}=1$.
By definition of $|\bfw_n|_{\ast,1}$ we have for any $\bfvarphi \in \bfW(\Omega)$
\[
   \int_\Omega \bfw_n \cdot\bfvarphi\dx \le |\bfw_n|_{\ast,1} \Vert \bfvarphi \Vert_{W_0^{1,3}(\Omega)^3}.
\]
We then obtain
\[
   \int_\Omega \mathcal{P}_{\bfV(\Omega)} \bfw_n \cdot\bfvarphi\dx =  \int_\Omega \bfw_n \cdot\bfvarphi\dx  \le |\bfw_n|_{\ast,1} \Vert \bfvarphi \Vert_{W_0^{1,3}(\Omega)^3}.
\]
Passing to the limit in this inequality yields that 
\[
  \int_\Omega \mathcal{P}_{\bfV(\Omega)}\bfw \cdot \bfvarphi \dx = 0,~\text{for any}~ \bfvarphi \in \bfW(\Omega).
\]
Owing to Lemma \ref{lem:carac-grad}, this in turn implies that there exists 
$\xi \in H^1(\Omega)$ such that $\mathcal{P}_{\bfV(\Omega)} \bfw = \gradi \xi$. 
Using the fact that $ \mathcal{P}_{\bfV(\Omega)} \bfw \in \bfV(\Omega)$ we have
$$
\| \mathcal{P}_{\bfV(\Omega)} \bfw \|_{L^2(\Omega)^3}^2 = \int_\Omega \mathcal{P}_{\bfV(\Omega)} \bfw \cdot \gradi \xi \dx = 0,
$$
which contradicts $\Vert \mathcal{P}_{\bfV(\Omega)} \bfw \Vert_{L^2(\Omega)^3} = 1$.
\end{proof}

\begin{lemma}[$L^2$ estimate on the time translates]\label{lem:trans-utildesemidis}
Under the assumptions Theorem \ref{theo:conv-semid}, the sequence $(\tilde \bfu_\nstep)_{N \ge 1}$ satisfies 
\begin{equation}
\label{eq:trans-utildesemidis}
 \int_0^{T-\tau}\!\!\!\!\!\!\!\!\Vert \tilde \bfu_\nstep(t+\tau) - \tilde \bfu_\nstep(t) \Vert_{L^2(\Omega)^3}^2 \dt \to 0 \mbox{ as } \tau \to 0, \mbox{ uniformly with respect to }N, 
\end{equation}
and is therefore relatively compact in $L^2(0,T;L^2(\Omega)^3)$.
\end{lemma}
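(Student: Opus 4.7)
The plan is to apply the Lions-like Lemma \ref{lem:lions} to the time translates $\bfw_\tau(t) := \tilde \bfu_N(t+\tau) - \tilde \bfu_N(t) \in H_0^1(\Omega)^3$, converting the $|\cdot|_{\ast,1}$-control provided by Lemma \ref{lem:transsemidis} into an $L^2(|\cdot|_{\ast,0})$-control at the price of a small $L^2(H_0^1)$-remainder, which is absorbed uniformly in $N$ by the energy bound \eqref{eq:tildeuu_est}. The $|\cdot|_{\ast,0}$-seminorm is then upgraded to a genuine $\|\cdot\|_{L^2}$-norm by exploiting that $\bfu_N(t) \in \bfV(\Omega)$ (so that $|\cdot|_{\ast,0}$ and $\|\cdot\|_{L^2}$ coincide on differences $\bfu_N(t+\tau) - \bfu_N(t)$, thanks to \eqref{ast0-PV}) and that $\bfu_N$ is close to $\tilde \bfu_N$ in $L^2(0,T;L^2)$ by \eqref{diffutu}.

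Concretely, I would first square the pointwise Lions-like inequality $|\bfw_\tau(t)|_{\ast,0} \le \varepsilon\, \|\bfw_\tau(t)\|_{H_0^1(\Omega)^3} + C_\varepsilon\, |\bfw_\tau(t)|_{\ast,1}$, integrate over $(0,T-\tau)$, and combine \eqref{eq:tildeuu_est} with Lemma \ref{lem:transsemidis} to obtain $\int_0^{T-\tau} |\bfw_\tau|_{\ast,0}^2\, \dt \le C\,\varepsilon^2 + C_\varepsilon\, \tau(\tau + \deltat_N)$. Next, using the telescoping decomposition $\bfw_\tau(t) = (\tilde \bfu_N - \bfu_N)(t+\tau) + (\bfu_N(t+\tau) - \bfu_N(t)) + (\bfu_N - \tilde \bfu_N)(t)$, the crucial observation that the middle term lies in $\bfV(\Omega)$ (so its $\|\cdot\|_{L^2}$- and $|\cdot|_{\ast,0}$-norms coincide), the triangle inequality applied first in $\|\cdot\|_{L^2}$ then in $|\cdot|_{\ast,0}$, together with the trivial bound $|\cdot|_{\ast,0} \le \|\cdot\|_{L^2}$ and \eqref{diffutu}, I expect to reach
\[
\int_0^{T-\tau} \|\bfw_\tau(t)\|_{L^2(\Omega)^3}^2\, \dt \le C_1\, \deltat_N^2 + C_2\, \varepsilon^2 + C_3(\varepsilon)\, \tau(\tau + \deltat_N).
\]

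The main obstacle is extracting \emph{uniform-in-$N$} convergence to $0$ as $\tau \to 0$, since the right-hand side still carries the $N$-dependent residue $C_1 \deltat_N^2$ which does not shrink with $\tau$. I would resolve this by a dichotomy: given $\eta > 0$, first fix $\varepsilon$ so that $C_2\varepsilon^2 \le \eta/3$; then choose $N_0$ so that $C_1 \deltat_N^2 \le \eta/3$ for every $N \ge N_0$, and finally pick $\tau$ small (depending on $\varepsilon$ and $\deltat_{N_0}$) to absorb the third term. The finitely many indices $N < N_0$ yield piecewise constant functions $\tilde \bfu_N : (0,T) \to L^2(\Omega)^3$, for each of which a direct computation gives $\int_0^{T-\tau}\|\bfw_\tau\|_{L^2}^2\, \dt \le \tau \sum_{n=1}^{N-1} \|\tilde \bfu_N^{n+1} - \tilde \bfu_N^n\|_{L^2(\Omega)^3}^2 \to 0$ as $\tau \to 0$; taking the minimum of these finitely many $\tau$-thresholds with the previous one delivers the required uniformity.

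Finally, the uniform convergence of translates \eqref{eq:trans-utildesemidis} combined with the uniform $L^\infty(0,T;L^2(\Omega)^3)$-bound from \eqref{eq:tildeuu_est} yields relative compactness of $(\tilde \bfu_N)_{N \ge 1}$ in $L^2(0,T;L^2(\Omega)^3)$ through the Kolmogorov-type theorem \cite[Corollary 4.41]{gh-edp}, after extending the sequence by zero outside $(0,T)$.
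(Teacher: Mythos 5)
Your proposal is correct and follows essentially the same route as the paper: the same decomposition of the translate through $\bfu_N$ (using that $\|\cdot\|_{L^2}=|\cdot|_{\ast,0}$ on $\bfV(\Omega)$ and \eqref{diffutu}), the same application of Lemma \ref{lem:lions} combined with Lemma \ref{lem:transsemidis} and the energy bound \eqref{eq:tildeuu_est}, and the same $\varepsilon$-then-$\tau$ selection at the end. Your explicit dichotomy over $N\ge N_0$ versus the finitely many $N<N_0$ merely spells out what the paper asserts tersely when it claims that $A_N(\tau)\to 0$ uniformly in $N$ thanks to \eqref{diffutu}; the two arguments are the same.
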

\begin{proof}
 By the triangle inequality, 
 \begin{align*}
  &\int_0^{T-\tau} \Vert \tilde \bfu_\nstep(t+\tau) - \tilde \bfu_\nstep(t) \Vert_2^2 \dt \le
2 (A_\nstep(\tau) + B_\nstep(\tau)), \mbox{ with } \\
  & A_\nstep(\tau) =  \int_0^{T-\tau} \Vert (\tilde \bfu_\nstep -\bfu_\nstep)(t+\tau) - (\tilde \bfu_\nstep-\bfu_\nstep)(t) \Vert_2^2 \dt, \\
  & B_\nstep(\tau) =  \int_0^{T-\tau} \Vert   \bfu_\nstep(t+\tau) - \ \bfu_\nstep(t) \Vert_2^2 \dt. 
 \end{align*}
For any fixed $N\in \xN$,  $A_\nstep(\tau) \to 0$ as $\tau \to 0$, and thanks to \eqref{diffutu}, this convergence is uniform with respect to $N$.
Let us then show that $B_\nstep(\tau) \to 0$ as $\tau \to 0$ uniformly with respect to $N$.

Since $\bfu_\nstep(t) \in \bfV(\Omega)$ for any $t \in (0,T)$ we have for any $t \in (0,T-\tau)$
\begin{multline*}
\Vert \bfu_\nstep(t+\tau) -  \bfu_\nstep(t) \Vert_{L^2(\Omega)^3} =\!\!\!\sup_{\substack{\bfv \in  \bfV(\Omega)\\ \ \Vert \bfv\Vert_{L^2(\Omega)^3} = 1}}\!\!\!\int_\Omega \left(\bfu_\nstep(t+\tau) -  \bfu_\nstep(t)\right) \cdot \bfv \dx  \\
 \le \Vert  (\bfu_\nstep - \tilde \bfu_\nstep)(t+\tau) -  (\bfu_\nstep - \tilde \bfu_\nstep)(t) \Vert_{L^2(\Omega)^3} + \!\!\!\!\!\! \sup_{\substack{\bfv \in  \bfV(\Omega)\\ \ \Vert \bfv\Vert_{L^2(\Omega)^3} = 1}}\!\!\!\!\!\!\int_\Omega (\tilde \bfu_\nstep(t+\tau) -  \tilde \bfu_\nstep(t)) \cdot \bfv \dx,
\end{multline*}
so that 
\[
B_N(\tau)  \le 2 A_\nstep(\tau) +  2 \int_0^{T-\tau} |\tilde  \bfu_\nstep(t+\tau) -  \tilde \bfu_\nstep(t) |_{\ast,0}^2 \dt
\]
 Let $\varepsilon >0$;  thanks to Lemma \ref{lem:lions}, there exists $C_\varepsilon  >0$ such that for any $N \ge 1$ and for any $t \in (0,T-\tau)$
\begin{align*}
 &|\tilde  \bfu_\nstep(t+\tau) -  \tilde \bfu_\nstep(t) |_{\ast,0} \le \varepsilon \Vert \tilde  \bfu_\nstep(t+\tau) -  \tilde \bfu_\nstep(t) \Vert_{H_0^1(\Omega)^3} + C_\varepsilon |\tilde  \bfu_\nstep(t+\tau) -  \tilde \bfu_\nstep(t) |_{\ast,1}, 
\end{align*}
and in particular for any $N \ge 1$ and $\tau \in (0,T)$
\begin{multline*}
 \int_0^{T-\tau} |\tilde  \bfu_\nstep(t+\tau) -  \tilde \bfu_\nstep(t) |^2_{\ast,0} \dt \le 2 \varepsilon^2 \int_0^{T-\tau} \Vert \tilde  \bfu_\nstep(t+\tau) -  \tilde \bfu_\nstep(t) \Vert_{H^1_0(\Omega)^3}^2 \dt \\ + 2C_\varepsilon^2 \int_0^{T-\tau} |\tilde  \bfu_\nstep(t+\tau) -  \tilde \bfu_\nstep(t) |^2_{\ast,1} \dt.
\end{multline*}
Thus, owing to lemmas \ref{lem:estsemidis} and \ref{lem:transsemidis}, 
\begin{equation*}
\int_0^{T-\tau} |\tilde  \bfu_\nstep(t+\tau) -  \tilde \bfu_\nstep(t) |^2_{\ast,0} \dt 
\le 8 \cter{cste:est}^2 \varepsilon^2  + 2C_\varepsilon^2 \cter{cste:esttrans1} \tau (\tau+\deltat_{\! N}),
\end{equation*}
and therefore, for any $ N \ge 1$ and $\tau \in (0,T)$,
$$
B_\nstep(\tau) \le 2 A_\nstep(\tau) +  16 \cter{cste:est}^2 \varepsilon^2 + 4 C_\varepsilon^2 \cter{cste:esttrans1} \tau(\tau+\deltat_{\! N}).
$$

Now let $\zeta > 0$ be given, and let:
\begin{itemize}
\item  $\tau_0 > 0$ such that for any $ \tau \in (0,\tau_0)$,   $ 2 A_\nstep(\tau) \le \zeta $ for any $N \ge 1$;
\item $\varepsilon > 0$ such that $ 16 \cter{cste:est}^2 \varepsilon^2 \le \zeta$;
\item $\tilde \tau_0 > 0$ 
such that for any $ \tau \in (0,\tilde \tau_0) $ 
and $N \ge 1$, $4 C_{\varepsilon}^2 \cter{cste:esttrans1} \tau (\tau+\deltat_{\! N}) \le \zeta$.
\end{itemize}

We then obtain that $B_\nstep(\tau) \le 3 \zeta$ for any $ \tau \in (0, \min(\tau_0,\tilde \tau_0)) $ and 
 $N \ge 1$ which implies that $ B_\nstep(\tau)  \to 0$  as $\tau \to 0$, uniformly with respect to $N$. 
 The proof of \eqref{eq:trans-utildesemidis} is thus complete.
 The relative compactness of the sequences $\widetilde \bfu_N$ and $\bfu_N$ follows by a Kolmogorov-like theorem (see e.g. \cite[Corollary 4.40]{gh-edp}) and \eqref{diffutu}.
\end{proof}

\subsection{Proof of step 3: convergence to a weak solution}\label{subsec:passlim}
By Lemma \ref{lem:trans-utildesemidis}, up to a subsequence, the sequence of predicted velocities $(\tilde \bfu_N)_{N \ge 1}$ converges to some limit $\bar \bfu \in (L^2(0,T;L^2(\Omega)^3)$,  and owing to \eqref{diffutu}, so does the sequence $(\bfu_N)_{N \ge 1}$.
There remains to check that $\bar \bfu$ is a weak solution to \eqref{pb:cont} in the sense of Definition \ref{def:weaksol}.
This is a result that we call ``Lax-Wendroff consistency'', following the famous paper \cite{lax-60-sys} see e.g. \cite{eym-22-lax}: assuming that the approximate solutions converge boundedly to a limit, this limit is a weak solution to the continuous problem. 

\begin{lemma}[Lax-Wendroff consistency of the semi-discrete scheme]\label{lem:weaksolsemidis}~`

Let $(\tilde \bfu_N)_{N \ge 1}\subset L^2(0,T;H^1_0(\Omega)^3)$ and $(\bfu_N)_{N \ge 1}$ $ \subset L^\infty(0,T;L^2(\Omega)^3)$ be sequences of solutions to the semi-discrete scheme \eqref{eq:semidiscrete} for $N \in \xN$ (see Definition \ref{def:exis-semid}), and assume that $\bar \bfu \in L^2(0,T;H^1_0(\Omega)^3)$ is such that $\tilde \bfu_N \to \bar \bfu$ in $L^2(0,T;L^2(\Omega)^3)$ and weakly in $L^2(0,T;H^1_0(\Omega)^3)$ and $\tilde \bfu_N \to \bar \bfu$ $\star$-weakly in $L^\infty(0,T;L^2(\Omega)^3)$ as $N\to + \infty$. 
Then the function $\bar \bfu$ is a weak solution to \eqref{pb:cont} in the sense of Definition \ref{def:weaksol}.
 \end{lemma}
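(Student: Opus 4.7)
The strategy is the standard Lax--Wendroff argument: take a smooth divergence-free test function, use its time samples as test functions in the prediction step, sum over time, perform discrete integration by parts in time, and pass to the limit in each term. Fix $\bfv \in C_c^\infty(\Omega \times [0,T))^3$ with $\dive \bfv = 0$, and define $\bfv_N^n(\bfx) = \bfv(t_N^n, \bfx)$; for $N$ large enough, $\bfv_N^{N-1}\equiv 0$ by compact support. Multiplying \eqref{eq:semidiscretepre} by $\deltat_{\! N} \bfv_N^n$, integrating over $\Omega$ and summing over $n \in \llbracket 0, N-1 \rrbracket$, the pressure contribution vanishes since $\int_\Omega \gradi p_N^n \cdot \bfv_N^n \dx = -\int_\Omega p_N^n \dive \bfv_N^n \dx = 0$. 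We are left with the identity $T_1^N + T_2^N + T_3^N = T_4^N$, where $T_1^N$, $T_2^N$, $T_3^N$ and $T_4^N$ respectively denote the contributions of the discrete time derivative, the convection term, the viscous term, and the source term.

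First I would treat the time-derivative term. Writing $\tilde \bfu_N^{n+1} - \bfu_N^n = (\tilde \bfu_N^{n+1} - \bfu_N^{n+1}) + (\bfu_N^{n+1} - \bfu_N^n)$, the first piece is controlled by Cauchy--Schwarz and \eqref{diffutu} (it is of order $\deltat_{\! N}^{1/2}$) and thus vanishes in the limit. For the second piece, Abel summation gives
\[
\sum_{n=0}^{N-1}\!\int_\Omega\! (\bfu_N^{n+1}-\bfu_N^n)\cdot \bfv_N^n \dx = -\!\int_\Omega \bfu_0 \cdot \bfv_N^0 \dx - \sum_{n=1}^{N-1}\!\int_\Omega\! \bfu_N^n \cdot (\bfv_N^n - \bfv_N^{n-1}) \dx,
\]
the boundary term at $n=N$ vanishing thanks to the compact support of $\bfv$. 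The first term converges to $-\int_\Omega \bfu_0 \cdot \bfv(0,\cdot) \dx$ by continuity of $\bfv$ in time. In the second term, $(\bfv_N^n - \bfv_N^{n-1})/\deltat_{\! N}$ converges uniformly to $\partial_t \bfv$, and the strong $L^2(0,T;L^2(\Omega)^3)$ convergence of $\bfu_N$ (obtained from that of $\tilde \bfu_N$ in Lemma \ref{lem:trans-utildesemidis} together with \eqref{diffutu}) yields the limit $-\int_0^T\!\!\int_\Omega \bar\bfu \cdot \partial_t \bfv \dx \dt$.

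For the viscous term, an integration by parts in space gives $T_3^N = \int_0^T\!\!\int_\Omega \gradi \tilde\bfu_N : \gradi \widetilde{\bfv}_N \dx \dt$, where $\widetilde{\bfv}_N$ is the piecewise constant-in-time interpolate of $\bfv$ at the nodes $t_N^n$. The weak convergence of $\tilde \bfu_N$ to $\bar \bfu$ in $L^2(0,T;H^1_0(\Omega)^3)$ combined with the strong $L^2(0,T;L^2(\Omega)^{3\times 3})$ convergence of $\gradi \widetilde{\bfv}_N$ to $\gradi \bfv$ (since $\bfv$ is smooth) yields $T_3^N \to \int_0^T\!\!\int_\Omega \gradi \bar\bfu : \gradi \bfv \dx \dt$. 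For the source term, $\bff_{\! N}$ is the $L^2$ projection of $\bff$ on time-piecewise constants and thus converges strongly to $\bff$ in $L^2((0,T)\times \Omega)^3$ by Lebesgue differentiation, so $T_4^N \to \int_0^T\!\!\int_\Omega \bff \cdot \bfv \dx \dt$.

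The main obstacle is the convection term
\[
T_2^N = -\sum_{n=0}^{N-1}\deltat_{\! N} \int_\Omega \tilde\bfu_N^{n+1} \otimes \bfu_N^n : \gradi \bfv_N^n \dx,
\]
in which the two velocity factors are shifted by one time step. I would rewrite this as a space-time integral $-\int_0^T\!\!\int_\Omega \tilde\bfu_N(t) \otimes \bfu_N(t-\deltat_{\! N}) : \gradi \widetilde{\bfv}_N(t) \dx \dt$. The uniform $L^2(L^2)$ time-translate estimate on $\bfu_N$ (a direct consequence of Lemma \ref{lem:trans-utildesemidis} and \eqref{diffutu}) implies that $\bfu_N(\cdot - \deltat_{\! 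N})$ still converges strongly to $\bar\bfu$ in $L^2(0,T;L^2(\Omega)^3)$. Since $\tilde\bfu_N$ also converges strongly in $L^2(L^2)$ to $\bar\bfu$ and both sequences are bounded in $L^2(L^2)$, the tensor product converges in $L^1(0,T;L^1(\Omega)^{3\times 3})$ to $\bar\bfu \otimes \bar\bfu$, and pairing with the uniformly convergent smooth test field $\gradi \widetilde{\bfv}_N$ gives $T_2^N \to -\int_0^T\!\!\int_\Omega \bar\bfu \otimes \bar\bfu : \gradi \bfv \dx \dt$. Adding the four limits yields exactly \eqref{weaksol}, so $\bar\bfu$ is a weak solution in the sense of Definition \ref{def:weaksol}.
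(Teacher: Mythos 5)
Your overall strategy is the same Lax--Wendroff argument as the paper's, but you test the prediction equation \eqref{eq:semidiscretepre} directly (summing from $n=0$), whereas the paper first recombines prediction and correction into \eqref{eq:discretepre}, so that the discrete time derivative involves only the predicted velocities $\tilde\bfu_N^{n+1}-\tilde\bfu_N^n$. Your choice forces you to handle the extra term
\[
S_N=\sum_{n=0}^{N-1}\int_\Omega\bigl(\tilde\bfu_N^{n+1}-\bfu_N^{n+1}\bigr)\cdot\bfv_N^n\dx,
\]
and the justification you give for its disappearance is incorrect. Estimate \eqref{diffutu} gives $\sum_{n=0}^{N-1}\deltat_{\! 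N}\Vert\tilde\bfu_N^{n+1}-\bfu_N^{n+1}\Vert_{L^2(\Omega)^3}^2\le C^2\deltat_{\! N}^2$, so the discrete Cauchy--Schwarz inequality only yields
\[
|S_N|\le\Vert\bfv\Vert_{L^\infty((0,T)\times\Omega)^3}\,|\Omega|^{1/2}\,N^{1/2}\Bigl(\sum_{n=0}^{N-1}\Vert\tilde\bfu_N^{n+1}-\bfu_N^{n+1}\Vert_{L^2(\Omega)^3}^2\Bigr)^{1/2}\le C'\,(N\deltat_{\! N})^{1/2}=C'\,T^{1/2},
\]
i.e.\ an $O(1)$ bound, not the claimed $O(\deltat_{\! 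N}^{1/2})$: the factor $1/\deltat_{\! N}$ incurred in passing from the $L^2(0,T;L^2)$ norm to the unweighted sum over $n$ exactly cancels the $O(\deltat_{\! N})$ gain of \eqref{diffutu}. As written, this step fails.

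The term does vanish, but for a structural reason you did not invoke: by the correction step \eqref{eq:semidiscretecor}, $\tilde\bfu_N^{n+1}-\bfu_N^{n+1}=\deltat_{\! N}\,\gradi(p_N^{n+1}-p_N^n)$ with $p_N^{n+1}-p_N^n\in H^1(\Omega)$, and since $\bfv_N^n$ is divergence free and compactly supported, $\int_\Omega\gradi q\cdot\bfv_N^n\dx=-\int_\Omega q\,\dive\bfv_N^n\dx=0$ for any $q\in H^1(\Omega)$; hence $S_N=0$ exactly --- the same cancellation that kills the pressure term. (Equivalently, you could follow the paper and sum the two half-steps before testing.) With that repair the rest of your argument is sound: the Abel summation and the treatment of the initial datum, the viscous and source terms, and the convection term all go through; for the latter, your use of the strong $L^2(0,T;L^2(\Omega)^3)$ convergence of both $\tilde\bfu_N$ and the shifted $\bfu_N(\cdot-\deltat_{\! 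N})$ (the shift being controlled by the time-translate estimates) is a legitimate variant of the paper's strong-times-weak argument.
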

 \begin{proof}
Let $\bfvarphi \in\bigl \{\bfw\in C_c^\infty ( [0,T) \times \Omega)^3,\ \dive \bfvarphi=0~\mbox{in}  (0,T) \times \Omega \bigr\}$. 
Let $(\bfvarphi_N^n)_{n \in \llbracket 0,N \rrbracket}$ be the sequence of functions of $\mathbf{E}(\Omega)$ defined by
$
\bfvarphi_N^n(\bfx)  = \bfvarphi(t_N^n,\bfx),\ \text{for any}~\bfx \in \Omega,
$
and let $\bfvarphi_N : (0,T) \to \mathbf{E}(\Omega)$ and $\bff_{\! N} : (0,T) \to L^2(\Omega)^3$ be defined by
\begin{equation*}
 \bfvarphi_\nstep (t) = \sum_{n=0}^{\nstep-1} \ \Char_{(t_N^\nalgo,t_N^{n+1}]}(t) \bfvarphi_N^{\nalgo}, \quad \quad \bff_\nstep (t) = \sum_{n=0}^{\nstep-1} \ \Char_{(t_N^\nalgo,t_N^{n+1}]}(t) \bff^{n+1}_{N}.
\end{equation*}
The regularity of $\bff$ and $\bfvarphi$ implies that:
\[
 \left.\begin{array}{l}
        \Vert \bff_{\! N} - \bff \Vert_{L^2((0,T) \times \Omega)^3} \to 0,\\
        \Vert \bfvarphi_N - \bfvarphi \Vert_{L^\infty((0,T) \times \Omega)^3} \to 0,\\
        \Vert  \gradi \bfvarphi_N - \gradi \bfvarphi \Vert_{L^\infty((0,T) \times \Omega)^{3 \times 3}} \to 0,
       \end{array}
\right\} \mbox{ as } N \to +\infty.
\]
Multiplying \eqref{eq:discretepre} by $\deltat_{\! N} \bfvarphi_N^{n}$, integrating over $\Omega$ and summing over $n \in \llbracket 1,N-1 \rrbracket$ yields
\begin{multline}\label{eq:weaksolsemidisRvarphi}
\sum_{n=1}^{N-1} \int_\Omega (\tilde\bfu_N^{n+1} - \tilde \bfu_N^n) \cdot \bfvarphi_N^n \dx  - \int_{\deltat_{\! N}}^T \int_\Omega \tilde \bfu_N \otimes \bfu_N : \gradi \bfvarphi_N \dx \dt \\ + \int_{\deltat_{\! N}}^T \int_\Omega \gradi \tilde \bfu_N : \gradi \bfvarphi_N \dx \dt = \int_{\deltat_{\! N}}^T \int_\Omega \bff_{\! N} \cdot \bfvarphi_N \dx \dt. 
\end{multline}
Using the fact that $\bfvarphi_N^{N}=0$ in $\Omega$ the first term of the left hand side reads
\begin{equation*}
\sum_{n=1}^{N-1} \int_\Omega (\tilde\bfu_N^{n+1} - \tilde \bfu_N^n) \cdot \bfvarphi_N^n \dx =  -  \int_0^T \int_\Omega \tilde \bfu_N^n \cdot (\bfvarphi_N^n - \bfvarphi_N^{n-1}) \dx \dt - \int_\Omega \tilde \bfu_N^1 \cdot \bfvarphi_N^0 \dx.
\end{equation*}

By the triangle inequality, 
\[
 \int_\Omega \tilde \bfu_N^1 \cdot \bfvarphi_N^0 \dx = \int_\Omega \bfu_0 \cdot \bfvarphi(0,\cdot) \dx +  \int_\Omega (\tilde \bfu_N^1 - {\bfu_0}) \cdot \bfvarphi(0,\cdot) \dx.
\]
Since the sequence $(\tilde \bfu_N)_{N \ge 1}$ converges to $\bar \bfu$ in $L^2(0,T;L^2(\Omega)^3)$, we obtain
\begin{equation}\label{eq:convweakstep1}
\lim_ {\Nti}\sum_{n=1}^{N-1} \int_\Omega (\tilde\bfu_N^{n+1} - \tilde \bfu_N^n) \cdot \bfvarphi_N^n \dx  = - \int_0^T \int_\Omega \bar \bfu \cdot \partial_t \bfvarphi \dx - \int_\Omega \bfu_0 \cdot \bfvarphi(0,\cdot) \dx.
\end{equation}
The second term in the left hand-side reads
\begin{multline*}
 \int_{\deltat_{\! N}}^T \int_\Omega \tilde \bfu_N \otimes \bfu_N : \gradi \bfvarphi_N \dx \dt = \int_0^T \int_\Omega \tilde \bfu_N \otimes \bfu_N : \gradi \bfvarphi \dx \dt  \\
+\int_0^T \int_\Omega \tilde \bfu_N \otimes \bfu_N :( \gradi \bfvarphi_N - \gradi \bfvarphi) \dx \dt  - \int_0^{\deltat_{\! N}} \int_\Omega \tilde \bfu_N \otimes \bfu_N : \gradi \bfvarphi_N \dx \dt
\end{multline*}
The convergence of the sequence $(\tilde \bfu_N)_{N \ge 1}$ in $L^2(0,T;L^2(\Omega)^3)$, the weak convergence of the sequence $(\bfu_N)_{N \ge 1}$ in $L^2(0,T;L^2(\Omega)^3)$,  the convergence of the sequence $ (\gradi \bfvarphi_N)_{N \ge 1}$ in $L^\infty((0,T)\times \Omega)^{3 \times 3}$  implies
\begin{equation}\label{eq:convweakstep2}
\lim_{\Nti}  \int_{\deltat_{\! N}}^T \int_\Omega \tilde \bfu_N \otimes \bfu_N : \gradi \bfvarphi_N \dx \dt  = \int_0^T \int_\Omega \bar \bfu \otimes \bar \bfu : \gradi \bfvarphi \dx \dt.
\end{equation}
The third term in the left hand-side may be written
\begin{multline*}
\int_{\deltat_{\! N}}^{T} \int_\Omega \gradi \tilde \bfu_N : \gradi \bfvarphi_N \dx \dt =\int_0^T \int_\Omega \gradi \tilde \bfu_N : \gradi \bfvarphi \dx \dt  \\
+ \int_0^T \int_\Omega \gradi \tilde \bfu_N :( \gradi \bfvarphi_N - \gradi \bfvarphi) \dx \dt- \int_0^{\deltat_{\! N}} \int_\Omega \gradi \tilde \bfu_N  : \gradi \bfvarphi_N \dx \dt 
\end{multline*}
The weak convergence of the sequence $(\gradi \tilde \bfu_N)_{N \ge 1}$ in $L^2(0,T;L^2(\Omega)^3)$ and the convergence of the sequence $ ( \gradi \bfvarphi_N)_{N \ge 1}$ in $L^2(0,T;L^2(\Omega)^3)$ implies
\begin{equation}\label{eq:convweakstep3}
\lim_{\Nti}\int_{\deltat_{\! N}}^{T} \int_\Omega \gradi \tilde \bfu_N : \gradi \bfvarphi \dx \dt = \int_0^T \int_\Omega \gradi \bar \bfu : \gradi \bfvarphi \dx \dt.
\end{equation}
The right hand-side satisfies
\begin{multline*}
\int_{\deltat_{\! N}}^T \int_\Omega \bff_{\! N} \cdot \bfvarphi_N \dx \dt= \int_0^T \int_\Omega \bff_{\! N} \cdot \bfvarphi \dx \dt 
\int_0^T \int_\Omega \bff_{\! N} \cdot ( \bfvarphi_N - \bfvarphi) \dx \dt \\   -\int_0^{\deltat_{\! N}} \int_\Omega \bff_{\! N} \cdot \bfvarphi_N \dx \dt.
\end{multline*}
The convergence of the sequence $( \bff_{\! N})_{N \ge 1}$ in $L^2(0,T;L^2(\Omega)^3)$ and  the convergence of the sequence $ (  \bfvarphi_N)_{N \ge 1}$ in $L^2(0,T;L^2(\Omega)^3)$  implies
\begin{equation}\label{eq:convweakstep4}
\lim_{\Nti} \int_{\deltat_{\! N}}^T \int_\Omega \bff_{\! N} \cdot \bfvarphi_N \dx \dt = \int_0^T \int_\Omega \bff \cdot \bfvarphi \dx \dt.
\end{equation}

Using ($\ref{eq:convweakstep1}$)-($\ref{eq:convweakstep4}$) and passing to the limit in ($\ref{eq:weaksolsemidisRvarphi}$) gives the expected result.
\end{proof}

\section{Analysis of the fully discrete projection scheme} \label{sec:MAC-def}
Our purpose is now to adapt the proof of convergence of the semi-discrete case to the fully discrete case. 
We choose as an example of space discretization a staggered discretization on a (possibly non uniform) rectangular grid of $\xR^3$.
The resulting scheme, often referred to as a MAC scheme, was analysed in \cite{gal-18-convmac} for an time-implicit scheme. 
The idea here is to prove its convergence for the incremental projection scheme.
We consider the following assumptions on $\Omega$ and on the time-space discretization, indexed by $N$ (in the convergence analysis, the time and space steps will tend to 0 as $N$ tends to $+\infty$).
\begin{subequations}\label{hyp:timespacedisc}
\begin{align}
   &\nonumber
    T >0, \quad \Omega \mbox{ is an open rectangular subset of }  \xR^3, \mbox{ with boundary }\\
   & \label{hyp:T-Omega-rec}\mbox{faces that are orthogonal to one of the vectors of the canonical basis of} \\
   &\nonumber \xR^3, \mbox{denoted by }\{\bfe^{(i)}, i=1,2,3\}, \\[1ex]
    \label{hyp:time}
   & N\ge 1, \qquad \deltat_{\! N} = \dfrac T N, \qquad t_N^n  = n\,\deltat_{\! N} \mbox{ for }  n \in \llbracket 0,N\rrbracket.\\[1ex]
   \nonumber 
   &\mathcal{D}_N= (\mesh_N, \edges_N) \mbox{ is a MAC discretization in the sense of}  \\
    &\label{hyp:mesh}\mbox{\cite[Definition 2.1]{gal-18-convmac}},\mbox{ with } \mesh_N \mbox{(resp. } \edges_N) \mbox{ the set of cells }\mbox{(resp. faces)},\\ 
   &\nonumber h_N = \max_{K \in \mesh_N} \mathrm{diam} K \mbox{ is the space step. }
\end{align}
\end{subequations}

Note that at this step, we are only considering one time step $\deltat_{\! N} = \frac T N$ and one discretization mesh $\disc_N$, which is also  indexed by $N$. 
This might seem strange, but it is in view of  the convergence analysis for which a sequence $(\disc_N, \deltat_{\! N})_{N\ge 1}$ will be considered, with $h_N, \deltat_{\! N} \to 0$ as $N\to + \infty$.

The regularity of the mesh is measured by the following parameter:
\begin{equation}\label{eqdef:thetaN}
\theta_N =\max \Bigl\{ \frac{|\edge|}{|\edge'|},
\ \edge \in \edgesi,\ \edge' \in \edgesj,\ i, j \in \llbracket 1, d\rrbracket,\ i\not= j \Bigr\},
\end{equation}
with $|\cdot|$ the Lebesgue measure (this notation is used in the following for either the $\xR^3$ or the $\xR^{2}$ measure).

We refer to \cite{gal-18-convmac} for the precise definition ot the discrete spaces and operators.
The approximate pressure belongs to the set $L_N(\Omega)$ of functions that are piecewise constant on the so called primal cells $K$ of the (primal) mesh $\mesh_N$: $p = \sum_{K\in \mesh_N} p_K \mathds 1_K$. 
The $i$-th component of the approximate velocities  belongs to the set $\Hmeshi(\Omega)$  of functions that are piecewise constant on the dual cells $D_\edge \in \edgesi$, where $\edgesi$ denotes the set of faces of the mesh that are orthogonal to $\bfe_i$. 
Denoting by $\edges(K)$ the set of faces of a given cell $K \in \mesh_N$, and by $\edge=K|L$ an interface between two neighbouring cells $K$ and $L$, a dual cell $D_\edge \in \edges \cap \edges(K)$ is defined by
\[D_\sigma =
\begin{cases}
  [\bfx_K \bfx_L] \times \edge,~\text{for}~\edge=K|L \subset \Omega, 
  \\
  [\bfx_K  \bfx_{K,\partial\Omega}] \times \edge,~\text{for}~\sigma \subset \partial \Omega.
\end{cases}
\]
where $x_K$ denotes the mass center of $K$ and $\bfx_{K,\partial\Omega}$ the orthogonal projection of $\bfx_K$ on ${\partial\Omega}$
We thus define three dual meshes of $\Omega$.
 
\subsection{The fully discrete scheme} \label{sec:scheme}
The space discretization of the time-discrete scheme \eqref{eq:semidiscrete} reads:
\begin{subequations} \label{eq:scheme}
\begin{align} 
 & \mbox{\emph{Initialization:}} \nonumber \\
 &\hspace{2ex} 
  \bfu_N^0 = (u^0_{N,i})_{i=1,2,3} \mbox{ with } u^0_{N,i} = \sum_{\edge \in \edges_N^{(i)}} \frac 1 {|\edge|} \!\! \int_\edge\!\!\ u_{0,i}(s) \ds \ \mathds 1_{D_\edge}, i=1,2,3, \label{scheme:init}\\&\hspace{2ex} \vspace{-.5cm} p^0 =0. \nonumber
 \\[2ex]
 \nonumber & \mbox{\emph{Solve for }} 0\leq n \leq N-1,
    \\[1ex]  &  \hspace{1ex} 
 \nonumber  \mbox{\emph{Prediction step}}  
\\[1ex] 
 \label{scheme:pred_int} &  \hspace{2ex}
     \frac{1}{\deltat_{\! N}}( \widetilde \bfu_{N}^{n+1} - \bfu_N^{n} ) + \bfC_{\! N}(\tilde \bfu_N^{\nalgo+1}) \bfu_N^n -\Delta_N \tilde \bfu_N^{\nalgo+1}\!\!  + \gradi_{\! N} p_N^\nalgo  = \bff_{\! N}^{n+1}~\text{in}~\Omega.
\\[1ex] 
\label{scheme:pred_ext} &  \hspace{2ex}  
 (\tilde u_N^{n+1})_\sigma = 0,~ \forall \edge \in \edgesext.
 \\[1ex]    &  \hspace{1ex}
\nonumber  \mbox{\emph{Correction step}} : 
\\[1ex] \label{scheme:cor_int} & \hspace{2ex}
\frac{1}{\deltat_{\! N}}(\bfu_N^{n+1} - \tilde \bfu_N^{n+1}) + \gradi_{\! N} (p_N^{n+1}-  p_N^\nalgo) =0~\text{in}~\Omega,
\\[1ex] \label{schemediv} & \hspace{2ex}
\dive_{\! N}\bfu_N^{n+1} =0~\text{in}~\Omega,
\\[1ex] \label{scheme:cor_ext} & \hspace{2ex}
(u_N^{n+1})_\sigma = 0,~ \forall \edge \in \edgesext,
\\[1ex] \label{scheme:pmean0} & \hspace{2ex}
\sum_{K \in \mesh} |K|\ p_K^{n+1} =0.
\end{align}
\end{subequations}
In this algorithm, the terms $\bfC_{\! N}(\tilde \bfu_N^{\nalgo+1})\bfu_N^n$,  -$\Delta_N \tilde \bfu_N^{\nalgo+1}$,  $\nabla_{\! N} p_N^\nalgo$ and $\dive_{\! N}\bfu_N^{n+1}$ are the MAC discretization of the terms  $\dive(\tilde{\bfu}_N^{n+1}  \otimes \bfu_N^\nalgo)$,  $\Delta \tilde{\bfu}_N^{n+1}$, $\gradi p_N^\nalgo$ and $\dive \bfu_N^{n+1}$ in the algorithm \eqref{eq:semidiscrete} and are defined in \cite[Section 2]{gal-18-convmac}.
In \eqref{scheme:pred_ext}, the vector function $\bff_{\! N}^{n+1}$ is defined by its components $(f_{N,i}^{n+1}, i=1,2,3)$ where $f_{N,i}^{n+1}$ is the piecewise constant function from $\Omega\times(0,T)$ to $\xR^3$ defined by 
\[
  \bff_{\! N}^{n+1}(\bfx)=\dfrac 1 {|D_\edge|}\dfrac 1 {\deltat_{\! N}}\int_{D_\edge}\int_{t^n}^{t^{n+1}} \bff(t,\bfx)  \dt\dx,\ \text{for a.e.}~\bfx \in D_\edge, \edge \in \edgesi.
\]
Let us briefly account for the existence of a solution at each step of this algorithm.

First remark that the discrete no slip boundary condition \eqref{scheme:pred_ext} and  \eqref{scheme:cor_ext} are equivalent to requiring that the $i$-th component $u_i$ of the approximate predicted and corrected velocities belongs to the space $\HmeshiNzero =\Bigl\{v\in\Hmeshi(\Omega),\ v(\bfx)=0\ \text{for a.e.}~ \bfx\in D_\edge,\ \text{for any}~ \edge \in \edgesexti\}$. 
We then set $\HmeshNzero(\Omega)=\prod_{i=1}^3 \HmeshiNzero(\Omega)$ and
 $\bfE_{\! N}(\Omega) = \{\bfv \in \HmeshNzero(\Omega) \, : \, \dive_{\! N}\bfv = 0\}$. 
 (See \cite[Section 2]{gal-18-convmac} for the definition of the discrete MAC divergence $\dive_{\! N}$.)
Thanks to the discrete duality of the divergence and gradient operators \cite[Lemma 2.4]{gal-18-convmac}, the space $\bfE_{\! N}(\Omega)$ may also be defined as  $\bfE_{\! N}(\Omega) = \{\bfv \in \HmeshNzero(\Omega) \, : \, \int_\Omega  \bfv \cdot \nabla_{\! N}  w \dx= 0, \; \forall  w \in L_N(\Omega)\}$.

Note that since $\bfu_0 \in \bfE(\Omega)$, we also have $\bfu^0_N \in \bfE_{\! N}(\Omega)$.

\noindent \textit{Prediction step} --
The existence of the predicted velocity follows from Lemma \ref{lem:existpredis}.

\smallskip
\noindent \textit{Correction step} -- 
A weak form of the correction step \eqref{scheme:cor_int}  which computes a divergence-free velocity and an associated pressure reads
\begin{subequations}\label{eq:semi-weakcordis}
\begin{align}
 & \psi_N^{n+1}= p_N^{n+1} - p_N^n \in L_N(\Omega), \int_\Omega \psi_N^{n+1} \dx = 0, \label{eq:semi-weakcordis1} \\  
 &\int_\Omega \nabla_{\! N} \psi_N^{n+1} \cdot \nabla_{\! N} q \dx = \deltat_{\! N} \int_\Omega \tilde \bfu_N^{n+1}\cdot \nabla_{\! N} q \dx, \; \text{for any}~ q \in L_N(\Omega),\label{eq:semi-weakcordis2}\\
 &\bfu_N^{\nalgo+1} =  \tilde \bfu_N^{\nalgo+1} - \frac 1 {\deltat_{\! N}} \nabla_{\! N}  \psi_N^{n+1}. \label{eq:semi-weakcordis3} 
\end{align}
\end{subequations}
Note that if $\bfu_N^{\nalgo+1}$ satisfies \eqref{eq:semi-weakcordis}, then $\int_\Omega  \bfu_N^{\nalgo+1}\cdot \nabla_{\! N} q \dx = 0$ for any $q \in L_N(\Omega)$, so that $ \bfu^{n+1} \in \bfE_{N}(\Omega)$.
The existence of $(\bfu^{\nalgo+1},p^{\nalgo+1}) \in \bfE_{\! N}(\Omega) \times L_N(\Omega)$ satisfying \eqref{eq:semi-weakcordis}  is a consequence of the decomposition result of Lemma \ref{lem:decompdis} (given in the appendix).

We may then define the approximate solutions as follows.

\begin{definition}[Approximate solutions, discrete case] \label{def:exis-dis}
Under the assumptions \eqref{hyp:f-u0} and  \eqref{hyp:timespacedisc}, 
there exists $(\tilde \bfu_N^{\nalgo},\bfu_N^{\nalgo},p_N^{\nalgo})_{\nalgo \in \llbracket 1,N \rrbracket} \subset \HmeshNzero(\Omega)\times \bfE_{\! N}(\Omega) \times L_N(\Omega)$ satisfying \eqref{scheme:pred_ext}-\eqref{schemediv}.
The approximate corrected and predicted velocities may thus be defined by  $ \bfu_N : (0,T) \to \bfE_{\! N}(\Omega)$ and $\tilde \bfu : (0,T) \to \HmeshNzero(\Omega)$ defined by
\begin{equation}\label{eqdef:fullfunctions-dis}
\bfu_\nstep (t) = \sum_{n=0}^{\nstep-1} \ \mathds 1_{(t^\nalgo_\nstep,t^{\nalgo+1}_\nstep]}(t)\bfu^{\nalgo}_\nstep,\quad \quad
\tilde \bfu_\nstep (t) = \sum_{n=0}^{\nstep-1} \  \mathds 1_{(t^\nalgo_\nstep,t^{n+1}_\nstep]}(t) \tilde \bfu^{\nalgo+1}_\nstep,
\end{equation}
\end{definition}
 
For a given $\nstep \ge 1$ and the associated (uniform) time discretization  
\begin{equation}
 \label{eqdef:time-discdis}
 \deltat_\nstep = \frac 1 \nstep, \qquad t_\nstep^\nalgo = n \deltat_\nstep, \; n \in \llbracket 0,N\rrbracket,
\end{equation}

 \begin{remark}[On the boundary conditions] \label{rem:bc1-dis}
The original homogeneous Dirichlet boundary conditions \eqref{boundary} of the strong formulation \eqref{pb:cont} is not imposed by the space $\HmeshNzero(\Omega)$, which only imposes the no slip condition. 
However, it is imposed on the predicted velocity in \eqref{scheme:pred_ext} by the definition of the discrete Laplace operator, see (8)-(10) in \cite[Section 2]{gal-18-convmac}. 
As in the semi-discrete case, it is not imposed in the correction step \eqref{scheme:pmean0}-\eqref{scheme:cor_int}. 

Note also that, as in the semi-discrete case, there is no need for a boundary condition on the pressure in the correction step. 
In fact, it can be inferred from the correction step  that the incremental pressure $\psi^{n+1} =  p^{n+1} - p^\nalgo$ satisfies a discrete Poisson equation on $\Omega$ with a Neumann boundary condition on the boundary. 
\end{remark}

\begin{remark}[On the initial condition]
  If the initial condition $\bfu_0 \in \bfE(\Omega)$ is relaxed to $\bfu_0 \in L^2(\Omega)^3$ as in Remark \ref{rem:semid-init}, the discrete initial condition should be taken as the orthogonal projection onto $\bfE_{\! N}(\Omega)$ of the function $\bfu^0$ defined by \eqref{scheme:init}.
\end{remark}

\begin{remark}
Summing ($\ref{scheme:pred_int}$) and ($\ref{scheme:cor_int}$), we get the discrete equivalent of \eqref{eq:discretepre}:  
\begin{multline}\label{eq:discretepredis}
\frac{1}{\deltat_{\! N}} (\tilde \bfu_N^{n+1} - \tilde \bfu_N^n) +\bfC_{\! N}(\tilde \bfu_N^{n+1})\bfu^n + \nabla_{\!N} (2p_N^n -p_N^{n-1}) \\- \Delta_N \tilde \bfu_N^{n+1} =  \bff_{\! N}^{n+1}\!,  \, \nalgo \in \llbracket 1,N-1 \rrbracket
\end{multline}
\end{remark}

Let us now state the convergence of the algorithm \eqref{eq:scheme} as the time step $\deltat_{\! N}$ and the mesh step $h_N$ tend to 0 (or $N = \frac T {\deltat_{\! N}}\to +\infty$); the proof of this result is the object of the following sections. 

\begin{theorem}[Convergence of the fully discrete projection algorithm] \label{theo:conv-dis}
Under the assumption \eqref{hyp:f-u0}, let $(\deltat_{\! N},\mathcal{D}_N)$ be a sequence of time space discretizations satisfying \eqref{hyp:timespacedisc}, such that $h_N \to 0$ as $N\to + \infty$  and such that the mesh regularity parameter $\theta_N$ defined by \eqref{eqdef:thetaN} remains bounded. 
Let  $ \bfu_N : (0,T) \to \bfE_{\! N}(\Omega)$ and $\tilde \bfu_N : (0,T) \to \HmeshNzero(\Omega)$ be the approximate predicted and corrected velocities defined by the scheme \eqref{eq:scheme} and Definition \ref{def:exis-dis}.
Then there exists $\bar \bfu \in L^2(0,T;\bfE(\Omega))$ $\cap$ $L^\infty(0,T;L^2(\Omega)^3)$ such that up to a subsequence, 

 \begin{itemize}
 \item[] $\tilde \bfu_N\to  \bar \bfu$ in $L^2(0,T;L^2(\Omega)^3)$ as $N\to +\infty$,  
 \item[]$ \gradi_{\! N} \tilde \bfu_N \to \gradi \bar \bfu$ weakly in $L^2((0,T)\times \Omega)^{3 \times 3}$.
 \item[]
 \item[]  $\bfu_N \to \bar \bfu$  in $L^2(0,T;L^2(\Omega)^3)$ and  $\star$-weakly in $L^\infty(0,T;L^2(\Omega)^3)$ as $N\to +\infty$. 
 \end{itemize}
 
 Moreover the function $\bar \bfu$ is a weak solution to \eqref{pb:cont} in the sense of Definition \ref{def:weaksol}.
\end{theorem}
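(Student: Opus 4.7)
The plan is to follow exactly the three-step architecture of the semi-discrete proof of Theorem \ref{theo:conv-semid}, transferring each ingredient to the MAC framework developed in \cite{gal-18-convmac}. The continuous operators $(\gradi,\dive,\Delta,\dive(\cdot\otimes\cdot))$ are replaced by their MAC counterparts $(\gradi_{\! N},\dive_{\! N},\Delta_N,\bfC_{\! N})$, which enjoy the same discrete duality, positivity and consistency properties, so that the algebra of the semi-discrete arguments carries over essentially verbatim; the real work is in obtaining uniform bounds and discrete compactness.

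\emph{Step 1: discrete energy estimate and weak convergence.} I would take the MAC $L^2$ inner product of \eqref{scheme:pred_int} with $\tilde\bfu_N^{n+1}$. Because $\bfu_N^n\in\bfE_{\! N}(\Omega)$ and the discrete convection operator $\bfC_{\! N}(\tilde\bfu_N^{n+1})\bfu_N^n$ has been shown in \cite{gal-18-convmac} to be non-negative (resp.\ skew-symmetric) on discretely divergence-free fields, one recovers the discrete analogue of \eqref{eq:estim_semid_pred}. Squaring \eqref{scheme:cor_int}, integrating and using the discrete duality $\langle \gradi_{\! N} p_N^{n+1},\bfu_N^{n+1}\rangle=-\langle p_N^{n+1},\dive_{\! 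N}\bfu_N^{n+1}\rangle=0$ from \cite[Lemma 2.4]{gal-18-convmac} absorbs the pressure cross-term and yields a telescoping identity for $\deltat_{\! N}\|\gradi_{\! N} p_N^n\|^2$. Summing in $n$, discrete Cauchy--Schwarz and Poincaré then produce the discrete counterparts of \eqref{eq:tildeuu_est} and \eqref{diffutu}. These bounds grant, up to a subsequence, a $\star$-weak limit of $\bfu_N$ in $L^\infty(0,T;L^2(\Omega)^3)$ and a weak limit of $\tilde\bfu_N$ together with $\gradi_{\! N}\tilde\bfu_N$ in $L^2((0,T)\times\Omega)$; the common limit $\bar\bfu$ lies in $L^2(0,T;\bfE(\Omega))$ by passing to the limit in $\dive_{\! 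N}\bfu_N=0$ via the weak-consistency results of \cite{gal-18-convmac}, and the weak $L^2$ limit of $\gradi_{\! N}\tilde\bfu_N$ is identified with $\gradi\bar\bfu$ by testing against smooth compactly supported functions.

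\emph{Step 2: compactness.} Mimicking Lemma \ref{lem:transsemidis}, I would test \eqref{eq:discretepredis} against a MAC-interpolate of $\bfvarphi\in\bfW(\Omega)$ obtained through a Fortin-type projector $r_N:\bfW(\Omega)\to\bfE_{\! N}(\Omega)$ that preserves the discrete divergence-free constraint and satisfies $\|r_N\bfvarphi-\bfvarphi\|_{L^3}\le C h_N\|\bfvarphi\|_{W^{1,3}_0}$. The pressure term then drops by discrete duality, while the diffusion, convection and forcing contributions are bounded exactly as in Lemma \ref{lem:transsemidis}, using the discrete Sobolev embedding $H^1_N\hookrightarrow L^6$ from \cite{gal-18-convmac}. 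This yields a $L^2(|\cdot|_{\ast,1})$-time-translate estimate of order $\tau(\tau+\deltat_{\! N})$. I would then establish a fully discrete Lions-type inequality by a compactness-contradiction argument: if it failed, a sequence $\bfw_N\in\HmeshNzero(\Omega)$ with $\|\mathcal{P}_{\bfV_N}\bfw_N\|_{L^2}=1$ and $|\bfw_N|_{\ast,1}\to 0$ bounded in $H^1_N$ would, by the discrete Rellich lemma of \cite{gal-18-convmac}, converge strongly in $L^2$ to some $\bfw\in H^1_0(\Omega)^3$; the limit relation and Lemma \ref{lem:carac-grad} would force $\mathcal{P}_{\bfV(\Omega)}\bfw=\gradi\xi\in\bfV(\Omega)$, hence zero, contradicting $\|\mathcal{P}_{\bfV(\Omega)}\bfw\|_{L^2}=1$ (continuity of the discrete Leray projections passing to the continuous one uses the bound on $\theta_N$). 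Combining the two estimates, together with \eqref{diffutu} and the uniform $H^1_N$ space-translate estimate on $\tilde\bfu_N$ already available from \cite{gal-18-convmac}, the Kolmogorov criterion gives strong $L^2(0,T;L^2(\Omega)^3)$ compactness of both $\tilde\bfu_N$ and $\bfu_N$.

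\emph{Step 3: Lax--Wendroff consistency.} Given a test function $\bfvarphi\in C_c^\infty([0,T)\times\Omega)^3$ with $\dive\bfvarphi=0$, I would define discrete-in-time, MAC-interpolated-in-space samples $\bfvarphi_N^n$ of $\bfvarphi$, multiply \eqref{eq:discretepredis} by $\deltat_{\! N}\bfvarphi_N^n$ and sum over $n$. Exactly as in Lemma \ref{lem:weaksolsemidis}, a discrete integration by parts in time produces the $\partial_t\bfvarphi$ term and the initial contribution, the pressure term vanishes modulo a $O(h_N)$ consistency error coming from $\dive_{\! N} r_N\bfvarphi$, and the strong $L^2(L^2)$ convergences of $\bfu_N$ and $\tilde\bfu_N$ combined with the consistency of the MAC convection, diffusion and gradient operators established in \cite{gal-18-convmac} yield the convective and viscous terms. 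Passing to the limit identifies $\bar\bfu$ as a weak solution in the sense of Definition \ref{def:weaksol}.

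The main obstacle is Step 2, and specifically the fully discrete version of the Lions-like Lemma \ref{lem:lions}: it requires both a stable Fortin-type interpolation preserving the discrete divergence-free constraint on non-uniform MAC grids, and enough discrete functional analysis (Rellich compactness, continuity of discrete Leray projections, commutation with the continuous projection in the limit) to run the contradiction argument uniformly in $N$. Everything else is a line-by-line translation of the semi-discrete proof, combined with the off-the-shelf MAC tools of \cite{gal-18-convmac}.
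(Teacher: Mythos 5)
Your proposal follows the paper's proof essentially line by line: the same discrete energy estimate with the skew-symmetry of $\bfC_{\! N}$ and the duality of $\gradi_{\! N}$ and $\dive_{\! N}$, the same time-translate estimate in the $|\cdot|_{\ast,1,N}$ semi-norm obtained by testing against the divergence-preserving Fortin interpolate $\widetilde{\mathcal P}_N$, the same contradiction proof of the discrete Lions-like inequality hinging on the strong $L^2$ convergence of the discrete Leray projections $\mathcal{P}_{\bfE_{\! N}(\Omega)}\bfw_N$ to $\mathcal{P}_{\bfV(\Omega)}\bfw$ (the paper's Lemma \ref{lem:contprojedges}), and the same Lax--Wendroff consistency step. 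You also correctly single out the fully discrete Lions lemma and the Fortin operator as the genuinely new ingredients relative to the semi-discrete case, which is exactly where the paper invests its effort.
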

\begin{proof}
 We give here the main steps of the proof, which follows that of the semi-discrete case; these steps are detailed in the following paragraphs.
 \begin{itemize}
  \item \emph{Step 1: first estimates and weak convergence (detailed in Section \ref{sec:estimates}).} 
  Let us define, for $q \in \xN^\ast,$ a discrete $W^{1,q}_0(\Omega)^3$-norm  for the discrete velocity fields.
  For $\bfv \in \Hmeshzero(\Omega)$ with values $(v_\sigma)_{\sigma \in \edges}$ let
\begin{equation}\label{eqdef:W1pnorm}
 \| \bfv \|_{1,q,N}^q = \sum_{i=1}^3 \sum_{\edged = \edge | \edge' \in \edgesdinti} |\edged| \frac{| v_\sigma - v_{\sigma'} |^q}{ d_\edged^{q-1}} + \sum_{i=1}^3 \sum_{\edged \in \edgesdexti. \cap \edgesd(D_\edge)} |\edged| \frac{| v_\sigma  |^q}{ d_\edged^{q-1}}.
\end{equation}
From the energy estimates of Lemma \ref{lem:estdis} below, we get that the approximate velocities $(\tilde \bfu_N)_{N \ge 1}$ and $(\bfu_N)_{N \ge 1}$ given in Definition \ref{def:exis-dis} satisfy
\begin{align} \label{eq:est-dis-tildeu}
&\sup_{\nstep \ge 1}\|\tilde \bfu_\nstep\|_{L^2(0,T;\bfH^1_{0,N}(\Omega))} \leq \cter{cste:estdis},\\
\label{eq:est-dis-u}
&\sup_{\nstep \ge 1}\|\bfu_\nstep\|_{L^{\infty}(0,T;L^{2}(\Omega)^3)} \leq \cter{cste:estdis},\\
&\label{diffdisutu}
\sup_{\nstep \ge 1} \| \bfu_\nstep - \tilde \bfu_\nstep \|_{L^2(0,T;L^2(\Omega)^3)}
  \le \cter{cste:estdis} \deltat_{\! N},
\end{align} 
where
\[\begin{array}{l}
 \displaystyle \hspace{10ex}
\|\bfv\|_{L^2(0,T;\bfH^1_{0,N}(\Omega))}^2 = \sum_{n=0}^{N-1} \deltat\ \|\bfv^{n+1}\|_{1,2,N}^2,
\\[3ex] \displaystyle \hspace{10ex}
\|\bfv\|_{L^\infty(0,T;L^2(\Omega)^3)} = \max \Bigl\{ \|\bfv^{n+1}\|_{L^2(\Omega)^3},\ n\in \llbracket 0, N-1\rrbracket \Bigr\}.
\end{array}
\]
and $\|\cdot\|_{1,2,N}$  is the discrete $H^1_0$ norm defined by \eqref{eqdef:W1pnorm} with $p=2$. 

In particular, \eqref{diffdisutu} yields that
\begin{equation}
 \label{eq:differencedis-u-utilde}
 \bfu_\nstep - \tilde \bfu_\nstep \to 0  \mbox{ in } L^2(0,T;L^2(\Omega)^3)  ~\text{as}~ \Nti.
\end{equation}
 
Owing to \eqref{eq:est-dis-tildeu}-\eqref{eq:est-dis-u}, there exist subsequences still denoted $(\bfu_\nstep)_{\nstep \ge 1}$  and $(\tilde \bfu_\nstep)_{\nstep \ge 1}$ that converge $\star$-weakly in $L^\infty(0,T;L^2(\Omega)^3)$ and weakly in $L^2(0,T;$ $L^2(\Omega)^3)$ respectively.
Moreover, again owing the bound \eqref{eq:est-dis-tildeu} and invoking the compactness result \cite[Theorem 3.1]{eym-10-sto}, there exists a subsequence still denoted by $(\tilde \bfu_{N})_{N \ge 1} $ that converges in $L^2(\Omega)^3$ to a function $\bfu \in H_0^1(\Omega)^3$, and such that $ (\gradi \tilde \bfu_{N})_{N \ge 1} $ converges to $\gradi \bar \bfu$  weakly in $L^2(\Omega)^3$.
By \eqref{eq:differencedis-u-utilde}, the subsequences $(\bfu_\nstep)_{\nstep \ge 1}$ and $(\tilde \bfu_\nstep)_{\nstep \ge 1}$  converge to the same limit $\bar \bfu$ weakly in $L^2(0,T;L^2(\Omega)^3)$.
From the bound \eqref{eq:est-dis-tildeu}, a classical regularity result (see e.g. \cite[Remark 14.1]{eym-00-book}) yields that $\bar \bfu \in L^2(0,T;(H^1_0(\Omega))^3)$.
Passing to the limit in the mass equation (e.g. by a straightforward adaptation of the first step of the proof of \cite[Theorem 3.13]{gal-18-convmac}), it follows that $\bar \bfu \in L^{\infty}(0,T;L^{2}(\Omega)^{3})\cap L^2(0,T;\bfE(\Omega))$.
 
There remains to show that $\bar \bfu$ is a weak solution in the sense of Definition \ref{def:weaksol} and in particular that $\bar \bfu $ satisfies \eqref{weaksol}.
The weak convergence is not sufficient to pass to the limit in the scheme, because of the nonlinear convection term, so that  we first need to get some compactness on one of the subsequences $(\tilde \bfu_N)_{N\in \xN}$ or $(\bfu_N)_{N\in \xN}$ (since, by \eqref{diffutu}, their difference tends to 0 in the $L^2$ norm).
  
\item \emph{Step 2: compactness and convergence in $L^2$ (detailed in section \ref{subsec:dis-compactness})} 
We adapt Step 2 of the convergence proof of the semi-discrete case. 
  Using the bound \eqref{eq:est-dis-tildeu} on the sequence $(\tilde \bfu_\nstep)_{\nstep \ge 1}$, some estimate on the discrete time derivative would be sufficient to obtain the convergence in $L^2(0,T;H_0^1(\Omega)^3)$ by a Kolmogorov-like theorem. 
  As in the semi-discrete case, a difficulty arises from the presence of the (discrete) pressure gradient in Equation \eqref{eq:discretepre}; we get rid of it by multiplying this latter equation by a discrete divergence-free function, chosen as the interpolate of a regular function $\bfvarphi \in L^2(0,T;(W_0^{1,3}(\Omega))^3)$ such that $\dive \bfvarphi = 0$. 
Let us then define the  discrete equivalent of the semi-norm \eqref{etoile-un} on $\HmeshNzero(\Omega)$ by:  
\begin{equation}\label{etoile-un-d}
|\bfw|_{\ast,1,N} = \sup\{ \int_\Omega \bfw \cdot \bfv \dx,~ \bfv \in \bfE_{\! N}(\Omega),~ \Vert \bfv \Vert_{1,3,N} =1\}.
\end{equation}
Estimates on the $L^2(|\cdot|_{\ast,1,N})$ semi-norm of the time translates of the predicted velocity $\widetilde \bfu_N$ are then obtained from the discrete momentum equation \eqref{eq:discretepre}: see Lemma \ref{lem:transdis}.
Again, this is only an intermediate result since we seek an estimate on the time translates of the predicted velocity in the $L^2(L^2)$ norm. 
So next, as in the semi-discrete case, we introduce the discrete equivalent of the semi-norm $|\cdot|_{\ast,0,N}$.
\begin{equation}\label{etoile-zero-d}
 \forall\bfw \in \HmeshNzero(\Omega), \; |\bfw|_{\ast,0,N} = \sup\{ \int_\Omega \bfw \cdot \bfv \dx,~ \bfv \in \bfE_{\! N}(\Omega),~ \Vert \bfv \Vert_{L^2(\Omega)^3} =1\}.
\end{equation}
 
Note that we have the following identity, which is the discrete equivalent of \eqref{ast0-PV}. 
\begin{equation}\label{ast0N-PE}
|\bfw|_{\ast,0,N} = \|\mathcal{P}_{\bfE_{\! N}(\Omega)} \bfw \|_{L^2(\Omega)^3},~\text{for any}~\bfw \in \HmeshNzero(\Omega),
\end{equation}
where $\mathcal{P}_{\bfE_{\! N}(\Omega)}$ is the orthogonal projection operator onto $\bfE_{\! N}(\Omega)$.
Then, thanks to a discrete equivalent of the Lions-like \ref{lem:lions} lemma (Lemma \ref{lem:lionsdis} below), we get that for any $\varepsilon >0$, there exists $C_\varepsilon \in \xR_+$ such that 
\begin{equation}\label{ineq:lionsdis}
    \forall N \in \xN, \; \forall \bfw \in \HmeshNzero, |\bfw|_{\ast,0,N} \le \varepsilon \Vert \bfw\Vert_{1,2,N} + C_{\varepsilon} |\bfw|_{\ast,1,N}.
\end{equation}
From this latter inequality, using Lemma \ref{lem:transdis} on the time translates of $\widetilde \bfu_N$ for the $L^2(|\cdot|_{\ast,1})$ semi-norm and the bound \eqref{eq:est-dis-tildeu}, we get that the time translates of $\widetilde \bfu_N$ for the $L^2(|\cdot|_{\ast,0,N})$ semi-norm also tend to 0 as $N \to +\infty$.

In order to show that the $L^2(L^2)$ norm of the time translates of $\tilde \bfu_N$ tend to 0, we remark that if $\bfv \in \bfE_{\! N}(\Omega)$, then $|\bfv|_{\ast,0,N} = ||\bfv||_{L^2(\Omega)}$ and conclude thanks to \eqref{diffdisutu}, see Lemma \ref{lem:trans-utildedis}).

\item \emph{Step 3: convergence towards the weak solution (detailed in Section \ref{sec:dis-weaksol})} 
Owing to a discrete Aubin-Simon-type theorem \cite[Theoreme 4.53]{gh-edp}, the estimates of steps 1 and 2 yield that there exist subsequences, still denoted $(\bfu_\nstep)_{\nstep \ge 1}$ and $(\tilde \bfu_\nstep)_{\nstep \ge 1}$, that converge to $\bar \bfu$ in $L^2(0,T;L^2(\Omega)^3)$.
Passing to the limit in the scheme \eqref{eq:scheme} then yields that $\bar \bfu$ satisfies ($\ref{weaksol}$) and in particular that $\bar \bfu$ is a weak solution to \eqref{pb:cont}. 
\end{itemize}
\end{proof}

\begin{remark}[Uniqueness and convergence of the whole sequence] \label{rem-uniq-instatdis}
If the solution of the continuous problem is unique, then again the whole sequence converges.
\end{remark}

\subsection{Energy estimates and weak convergence} \label{sec:estimates}

We first obtain a discrete equivalent of the $L^2(0,T;H_0^1(\Omega)^3)$ and $L^\infty(0,T;L^2(\Omega)^3)$ estimates for the predicted and corrected velocity.

\begin{lemma}[Energy estimates] \label{lem:estdis}
Under the assumption \eqref{hyp:f-u0}, let $N \ge 1$, $(\deltat_{\! N},\mathcal{D}_N)$ be a sequence of time space discretization satisfying \eqref{hyp:timespacedisc} and let $(\tilde \bfu_N^{\nalgo},\bfu_N^{\nalgo},p_N^{n})_{n \in \llbracket 0,N \rrbracket}$ $ \subset  \HmeshNzero(\Omega) \times \bfE_{\! N}(\Omega) \times L_N(\Omega)$ be a solution to \eqref{eq:scheme}.
The following estimate holds for $ n \in \llbracket 0,N-1 \rrbracket$:
\begin{multline}\label{eq:local_u_estdis}
  \frac 1 {2 \deltat_{\! N}} \left(\Vert\bfu_N^{\nalgo+1}\Vert_{L^2(\Omega)^3}^2 - \Vert\bfu_N^{\nalgo}\Vert_{L^2(\Omega)^3}^2\right)  \\ + \frac{\deltat_{\! N}}{2} \left( \| \nabla_{\! N}p_N^{n+1} \|_{L^2(\Omega)^3}^2 -  \| \nabla_{\! N}p_N^{n} \|_{L^2(\Omega)^3}^2\right)  \\ +   \frac 1 {2 \deltat_{\! N}} \Vert\tilde\bfu_N^{\nalgo+1} - \bfu_N^{\nalgo}\Vert_{L^2(\Omega)^3}^2 + \| \tilde \bfu_N^{\nalgo+1}\|_{1,2,N}^2 \le \int_\Omega   \bff_{\! N}^{n+1} \cdot \tilde \bfu_N^{n+1} \dx.
\end{multline}
Consequently, there exists $\ctel{cste:estdis}$ depending only on $\Omega$,  $\Vert u_0 \Vert_{{L^2(\Omega)^3}}$,  $\Vert \bff \Vert_{{L^2(\Omega)^3}}$ and $\theta_N$, in a nondecreasing way, such that the estimates \eqref{eq:est-dis-tildeu}-
\eqref{diffdisutu} hold.
\end{lemma}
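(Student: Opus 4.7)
The plan is to mimic the semi-discrete proof of Lemma~\ref{lem:estsemidis} step by step, replacing each continuous manipulation by its staggered MAC counterpart from \cite{gal-18-convmac}, and then to read off \eqref{eq:est-dis-tildeu}--\eqref{diffdisutu} from the summed form of \eqref{eq:local_u_estdis}.

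\emph{Step 1: the local estimate.} I would take the discrete $L^2$-inner product of \eqref{scheme:pred_int} with $\tilde{\bfu}_N^{n+1}$. The time-derivative term gives the three-term identity $\frac{1}{2\deltat_{\! N}}(\|\tilde{\bfu}_N^{n+1}\|_{L^2}^2 - \|\bfu_N^n\|_{L^2}^2 + \|\tilde{\bfu}_N^{n+1}-\bfu_N^n\|_{L^2}^2)$. The convective contribution vanishes by the discrete skew-symmetry $\int_\Omega \bfC_{\! N}(\tilde{\bfu}_N^{n+1})\bfv\cdot\tilde{\bfu}_N^{n+1}\dx=0$, valid for $\bfv\in\bfE_{\! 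N}(\Omega)$ (a consequence of the construction of $\bfC_{\! N}$ in \cite{gal-18-convmac}), applied with $\bfv=\bfu_N^n$. Discrete duality turns $-\Delta_N$ into $\|\tilde{\bfu}_N^{n+1}\|_{1,2,N}^2$ and yields $\int_\Omega \nabla_{\! N}p_N^n\cdot\tilde{\bfu}_N^{n+1}\dx$ for the pressure term, producing the discrete analogue of \eqref{eq:estim_semid_pred}. Next I rewrite \eqref{scheme:cor_int} as $\bfu_N^{n+1}+\deltat_{\! N}\nabla_{\! N}p_N^{n+1}=\tilde{\bfu}_N^{n+1}+\deltat_{\! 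N}\nabla_{\! N}p_N^n$, square, and integrate over $\Omega$; the cross term on the left vanishes because $\bfu_N^{n+1}\in\bfE_{\! N}(\Omega)$, and dividing by $2\deltat_{\! N}$ produces the discrete correction identity. Summed with the prediction inequality, this gives \eqref{eq:local_u_estdis}.

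\emph{Step 2: global bounds.} I would sum \eqref{eq:local_u_estdis} over $n\in\llbracket 0,m-1\rrbracket$ for arbitrary $m\le N$, using $p_N^0=0$. The right-hand side is handled with Cauchy--Schwarz together with Young's inequality and the discrete Poincaré inequality from \cite{gal-18-convmac} (whose constant depends on $|\Omega|$ and $\theta_N$), so as to absorb $\|\tilde{\bfu}_N^{n+1}\|_{L^2}$ into $\|\tilde{\bfu}_N^{n+1}\|_{1,2,N}^2$. Taking the maximum over $m$ delivers \eqref{eq:est-dis-u}, and the surviving sum of $\deltat_{\! N}\|\tilde{\bfu}_N^{n+1}\|_{1,2,N}^2$ gives \eqref{eq:est-dis-tildeu}. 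The same computation also produces the by-product
\[
\sum_{n=0}^{N-1}\|\tilde{\bfu}_N^{n+1}-\bfu_N^n\|_{L^2(\Omega)^3}^2 \le \cter{cste:estdis}^2\,\deltat_{\! N},
\]
which is the key tool in Step~3.

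\emph{Step 3: the bound \eqref{diffdisutu}.} From \eqref{scheme:cor_int}, $\tilde{\bfu}_N^{n+1}-\bfu_N^{n+1}=\deltat_{\! N}\nabla_{\! N}\psi_N^{n+1}$ with $\psi_N^{n+1}=p_N^{n+1}-p_N^n$. Since $\bfu_N^n,\bfu_N^{n+1}\in\bfE_{\! N}(\Omega)$, discrete duality gives $\int_\Omega \nabla_{\! N}\psi_N^{n+1}\cdot(\bfu_N^{n+1}-\bfu_N^n)\dx=0$, so the decomposition $\tilde{\bfu}_N^{n+1}-\bfu_N^n=(\tilde{\bfu}_N^{n+1}-\bfu_N^{n+1})+(\bfu_N^{n+1}-\bfu_N^n)$ is Pythagorean:
\[
\|\tilde{\bfu}_N^{n+1}-\bfu_N^{n+1}\|_{L^2(\Omega)^3}^2 \le \|\tilde{\bfu}_N^{n+1}-\bfu_N^n\|_{L^2(\Omega)^3}^2.
\]
Multiplying by $\deltat_{\! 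N}$, summing over $n$ and inserting the Step-2 by-product yields $\|\bfu_N-\tilde{\bfu}_N\|_{L^2(0,T;L^2(\Omega)^3)}^2\le \cter{cste:estdis}^2 \deltat_{\! N}^2$, which is \eqref{diffdisutu}.

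\emph{Main obstacle.} The algebra is essentially identical to the semi-discrete case; the genuinely discrete facts one needs are the skew-symmetry of $\bfC_{\! N}(\cdot)\bfu_N^n$ on the staggered grid (which crucially uses $\dive_{\! N}\bfu_N^n=0$) and the duality pairings between $\nabla_{\! N}$, $\dive_{\! N}$ and $-\Delta_N$, all collected in \cite{gal-18-convmac}; the $\theta_N$ dependence of the final constant enters only through the discrete Poincaré inequality used in Step~2. The single new argument relative to the semi-discrete proof is the short discrete Pythagorean identity of Step~3, which replaces the direct continuous $L^2$ calculation and provides the $\deltat_{\! N}$ rate in \eqref{diffdisutu}.
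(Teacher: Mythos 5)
Your proposal is correct and follows essentially the same route as the paper: test the prediction step with $\tilde\bfu_N^{n+1}$ (using the three-term identity, the skew-symmetry of the convection term for a discretely divergence-free convecting field, and discrete duality), square the correction step so the cross term drops by $\bfu_N^{n+1}\in\bfE_{\! N}(\Omega)$, add the two, and sum in time with Cauchy--Schwarz and the discrete Poincar\'e inequality. Your explicit Step 3 (the orthogonal decomposition giving $\Vert\tilde\bfu_N^{n+1}-\bfu_N^{n+1}\Vert_{L^2}\le\Vert\tilde\bfu_N^{n+1}-\bfu_N^{n}\Vert_{L^2}$) is a harmless and in fact clarifying addition; note that with the paper's definition \eqref{eqdef:fullfunctions-dis} the difference $\bfu_N-\tilde\bfu_N$ equals $\bfu_N^{n}-\tilde\bfu_N^{n+1}$ on $(t_N^n,t_N^{n+1}]$, so \eqref{diffdisutu} already follows directly from your Step-2 by-product.
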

\begin{proof}
By Lemma \ref{lem:existpredis} with $\alpha = \frac{1}{ \deltat_{\! N}}$, we have for $ n \in \llbracket 0,N-1 \rrbracket$
\begin{multline*}
\frac 1 {2\deltat_{\! N}} \Vert \tilde \bfu_N^{\nalgo+1}\Vert_{L^2(\Omega)^3}^2  - \frac 1 {2\deltat_{\! N}} \Vert \bfu_N^\nalgo\Vert_{L^2(\Omega)^3}^2  + \frac 1 {2\deltat_{\! N}}  \Vert \tilde \bfu_N^{\nalgo+1}- \bfu_N^\nalgo\Vert_{L^2(\Omega)^3}^2  \\ +  \| \tilde \bfu_N^{\nalgo+1}\|_{1,2,N}^2 - \int_\Omega p_N^n \dive_{\! N}\tilde \bfu_N^{n+1} \dx
\le \int_\Omega \bff_{\! N}^{n+1} \cdot \tilde \bfu_N^{n+1} \dx.
\end{multline*}
Squaring the relation \eqref{scheme:cor_int}, integrating over $\Omega$,  multiplying by $\frac{\deltat_{\! N}}{ 2}$ and owing to \eqref{schemediv} and to the discrete duality property of the MAC scheme \cite[Lemma 2.4]{gal-18-convmac}, we get 
\begin{multline*}
  \frac 1 {2 \deltat_{\! N}} \Vert \bfu_N^{\nalgo+1}\Vert_{L^2(\Omega)^3}^2 + \frac{\deltat_{\! N}}{2}\| \nabla_{\! N} p_N^{n+1} \|_{L^2(\Omega)^3}^2 = \frac 1 {2 \deltat_{\! N}} \Vert \tilde \bfu_N^{\nalgo+1}\Vert_{L^2(\Omega)^3}^2 \\ + \frac{\deltat_{\! N}}{2}\| \nabla_{\! N}p_N^n \|_{L^2(\Omega)^3}^2 - \int_\Omega p_N^n \dive_{\! N}\tilde \bfu_N^{n+1} \dx.
\end{multline*}
Summing this latter relation with the previous relation yields for $ n \in \llbracket 0,N-1 \rrbracket$
\begin{multline*}
  \frac 1 {2 \deltat_{\! N}} \left(\Vert\bfu_N^{\nalgo+1}\Vert_{L^2(\Omega)^3}^2 - \Vert\bfu_N^{\nalgo}\Vert_{L^2(\Omega)^3}^2\right) + \frac{\deltat_{\! N}}{2} \left( \| \nabla_{\! N} p_N^{n+1} \|_{L^2(\Omega)^3}^2 -  \| \nabla_{\! N} p_N^{n} \|_{L^2(\Omega)^3}^2\right)   \\  +   \frac{1}{2 \deltat_{\! N}} \Vert\tilde\bfu_N^{\nalgo+1} - \bfu_N^{\nalgo}\Vert_{L^2(\Omega)^3}^2 +\|\tilde \bfu_N^{\nalgo+1}\|_{1,2,N}^2   \le  \int_\Omega  \bff_{\! N}^{n+1} \cdot \tilde \bfu_N^{n+1} \dx.
\end{multline*}

We then get the relation \eqref{eq:local_u_estdis} using the Cauchy-Schwarz inequality and the discrete Poincar\'e estimate \cite[Lemma 9.1]{eym-00-book} after summing over the time steps.
\end{proof}

\subsection{Estimates on the time translates and compactness}  \label{subsec:dis-compactness}

\begin{lemma}[A first estimate on the time translates]\label{lem:transdis}
Under the assumptions of Theorem \ref{theo:conv-dis}, there exists $\ctel{cste:transdis} >0$ only depending on $|\Omega|$,  the $L^2$-norm of $\bfu_0$ and the $L^2$-norm of $\bff$ such that for any $N \ge 1$ and for any $\tau \in (0,T)$
\begin{equation*}
\int_0^{T-\tau} |\tilde \bfu_N(t+\tau,\cdot) -  \tilde \bfu_N(t,\cdot) |^2_{\ast,1,N} \dt \le \cter{cste:transdis}  \tau (\tau +\deltat_{\! N}).
\end{equation*}
\end{lemma}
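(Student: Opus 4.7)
The plan is to mimic the semi-discrete argument of Lemma \ref{lem:transsemidis} step by step, replacing each continuous operator with its MAC counterpart and each functional inequality with its discrete version. I would start exactly as in the semi-discrete case: for $N\ge 2$ (the case $N=1$ being trivial), introduce the indicator functions $\chi_{N,\tau}^n(t) = \mathds{1}_{(t_N^n-\tau,t_N^n]}(t)$ so that
\[
\tilde \bfu_N(t+\tau) - \tilde \bfu_N(t) = \sum_{n=1}^{N-1} \chi_{N,\tau}^n(t)(\tilde \bfu_N^{n+1} - \tilde \bfu_N^n),\qquad t\in(0,T-\tau),
\]
and plug in \eqref{eq:discretepredis} to replace $\tilde \bfu_N^{n+1} - \tilde \bfu_N^n$ by $\deltat_{\! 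N}$ times the sum of the discrete diffusion, convection, pressure and source contributions.

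Next, I would test against an arbitrary $\bfv \in \bfE_{\! N}(\Omega)$ with $\|\bfv\|_{1,3,N}=1$, obtaining four terms $A_d(t), A_c(t), A_p(t), A_{\bff}(t)$ analogous to those in Lemma \ref{lem:transsemidis}. The pressure term $A_p(t)$ disappears because of the discrete duality between $\nabla_{\! N}$ and $\dive_{\! N}$ (see \cite[Lemma 2.4]{gal-18-convmac}) and the fact that $\dive_{\! N}\bfv = 0$ for $\bfv\in \bfE_{\! N}(\Omega)$. For $A_d(t)$ I would use the discrete integration-by-parts formula associated with $\Delta_N$ and a discrete Hölder inequality with exponents $(3/2,3)$ together with $|\Omega|^{1/6}$ to get a bound of the form
\[
A_d(t) \le |\Omega|^{1/6}\|\bfv\|_{1,3,N}\sum_{n=1}^{N-1}\chi_{N,\tau}^n(t)\deltat_{\! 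N}\|\tilde \bfu_N^{n+1}\|_{1,2,N}.
\]
For the convection term $A_c(t)$, I would use the discrete $L^6$ embedding available for the MAC space (discrete Sobolev inequality, see \cite[Section 3]{gal-18-convmac}), namely $\|\bfw\|_{L^6} \le C_{\mathrm{sob}}^{(2,6)}\|\bfw\|_{1,2,N}$, together with the uniform bound \eqref{eq:est-dis-u} on $\bfu_N$, to obtain an estimate of the same form as in the semi-discrete case. The source term $A_{\bff}(t)$ is handled by the discrete embedding $\|\bfv\|_{L^3}\le C_{\mathrm{sob}}^{(3,3)}\|\bfv\|_{1,3,N}$ and Cauchy–Schwarz on $D_\edge$ for $\bff_{\! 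N}^{n+1}$.

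Collecting the four bounds, taking the supremum over $\bfv\in \bfE_{\! N}(\Omega)$ with $\|\bfv\|_{1,3,N}=1$ and applying Cauchy–Schwarz in $n$ using $\sum_{n=1}^{N-1}\chi_{N,\tau}^n(t)\deltat_{\! N} \le \tau + \deltat_{\! N}$ gives
\[
|\tilde \bfu_N(t+\tau)-\tilde \bfu_N(t)|_{\ast,1,N}^2 \le 2C^2(\tau+\deltat_{\! N})\sum_{n=1}^{N-1}\chi_{N,\tau}^n(t)\deltat_{\! N}\bigl(\|\tilde \bfu_N^{n+1}\|_{1,2,N}^2 + \|\bff_{\! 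N}^{n+1}\|_{L^2(\Omega)^3}^2\bigr).
\]
Finally, integrating in $t$ over $(0,T-\tau)$, using the Fubini-type bound $\int_0^{T-\tau}\chi_{N,\tau}^n(t)\dt \le \tau$ and the energy estimates \eqref{eq:est-dis-tildeu} and \eqref{hyp:f-u0}, yields the stated estimate with a constant $\cter{cste:transdis}$ depending only on $|\Omega|$, $\|\bfu_0\|_{L^2}$, $\|\bff\|_{L^2}$ (and the mesh regularity $\theta_N$ through the discrete Sobolev constants).

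The main technical obstacle is ensuring that the discrete Sobolev embeddings $H^1_{0,N}\hookrightarrow L^6$ and $W^{1,3}_{0,N}\hookrightarrow L^3$ hold with constants that are uniform in $N$ (they depend on $\theta_N$ only, which stays bounded by assumption), and that the discrete duality $\langle \Delta_N \bfw, \bfv\rangle = -\langle \gradi_{\! N}\bfw, \gradi_{\! N}\bfv\rangle$ is exactly of the same structure as its continuous counterpart; all of these tools are available in \cite{gal-18-convmac}, so no new inequality needs to be developed.
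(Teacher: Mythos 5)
Your proposal follows the paper's own proof essentially line for line: the same decomposition via the indicator functions $\chi_{N,\tau}^n$, substitution of \eqref{eq:discretepredis}, the same four terms $A_d, A_c, A_p, A_{\bff}$ with $A_p=0$ by discrete duality, the same discrete Sobolev/H\"older bounds, and the same final summation and integration steps. The only (correct) refinement you add is the explicit remark that the constant also depends on $\theta_N$ through the discrete Sobolev constants, which the paper absorbs via the boundedness assumption on $\theta_N$ in Theorem \ref{theo:conv-dis}.
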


\begin{proof}
For $t \in (0,T-\tau)$,  
$
\displaystyle\tilde \bfu_N(t+\tau)  -\tilde \bfu_N(t)  = \sum_{n=1}^{N-1} \chi_{N,\tau}^n(t) (\tilde \bfu_N^{n+1} - \tilde \bfu_N^n)$, with $\chi_{N,\tau}^n$ defined by \eqref{eqdef:chi-N-tau}.
Using \eqref{eq:discretepredis}, we thus get that
\begin{multline*}
\tilde \bfu_N(t+\tau) - \tilde \bfu_N(t) = \deltat_{\! N} \!\! \sum_{n=1}^{N-1} \!\!   \chi_{N,\tau}^n(t)\Delta_N \tilde \bfu_N^{n+1}  - \deltat_{\! N} \sum_{n=1}^{N-1} \!\!   \chi_{N,\tau}^n(t) \bfC_{N}(\tilde \bfu_N^{n+1}) \bfu_N^n\\
 - \deltat_{\! N} \sum_{n=1}^{N-1} \!\!   \chi_{N,\tau}^n(t) \nabla_{\! N} (2 p_N^{n} - p_N^{n-1}) + \deltat_{\! N} \sum_{n=1}^{N-1} \!\!   \chi_{N,\tau}^n(t) \bff_{\! N}^{n+1}.
\end{multline*}
 
Let $\bfvarphi  \in \bfE_{\! N}(\Omega)$ and let $A(t)= \int_\Omega  \bigl( \tilde\bfu_N(t+\tau) - \tilde \bfu_N(t)\bigr) \cdot \bfvarphi \dx.$
Since 
\[\displaystyle \int_\Omega\chi_{N,\tau}^n(t) \Delta_N \tilde \bfu_N^{n+1} \cdot \bfvarphi\dx = \int_\Omega\chi_{N,\tau}^n(t) \gradi_{\! N} \tilde \bfu_N^{n+1}: \gradi_{\! N} \bfvarphi\dx,\] 
where $\gradi_{\! N}$ is the gradient operator of the velocity defined on each dual rectangular grid, see \cite[Section 2]{gal-18-convmac}), we get that
\begin{align*}
&A(t)=  A_{d}(t) + A_{c}(t) + A_{p}(t) + A_{\bff}(t) \mbox{ with}\\
&
A_{d}(t) = -\sum_{n=1}^{N-1}    \chi_{N,\tau}^n(t)\deltat_{\! N} \int_\Omega \gradi_{\! N} \tilde{\bfu}_N^{n+1} : \gradi_{\! N} \bfvarphi \dx,
\\ &
A_{c}(t) = - \sum_{n=1}^{N-1}    \chi_{N,\tau}^n(t)\deltat_{\! N}  b_{N}(\tilde \bfu_N^{n+1}),\bfu_N^n, \bfvarphi)
\\ &
A_{p}(t) = \sum_{n=1}^{N-1}    \chi_{N,\tau}^n(t) \deltat_{\! N} \int_\Omega  (2 p_N^{n} - p_N^{n-1}) \dive_{\! N}\bfvarphi \dx,
\\ &
A_{\bff}(t) =  \sum_{n=1}^{N-1}    \chi_{N,\tau}^n(t)\deltat_{\! N} \int_\Omega   \bff_{\! N}^{n+1}  \cdot  \bfvarphi \dx,
\end{align*}
with
\[
b_{N}(\tilde \bfu_N^{n+1}),\bfu_N^n, \bfvarphi)= \bfC_{N}(\tilde \bfu_N^{n+1})\bfu_N^n \cdot \bfvarphi.
\]
Let us reproduce at the fully discrete level the computations done for each of these terms in the proof of Lemma \ref{lem:transsemidis}.

By a technique similar to that of  \cite[Lemma 3.5]{gal-18-convmac}, we get that
\begin{equation}\label{eq:est_Addis}
A_{d}(t) \le |\Omega|^{1/6} \| \bfvarphi \|_{1,3,N} \sum_{n=1}^{N-1} \chi_{N,\tau}^n(t) \deltat_{\! N} \| \tilde{\bfu}_N^{n+1}\|_{1,2,N}.
\end{equation}

Using H\"older's inequality with exponents 2, 6 and 3 ($\frac 1 2 + \frac 1 6 + \frac 1 3 = 1$), we get (similarly to the estimate of \cite[Lemma 3.5]{gal-18-convmac}) that there exists $\ctel{cste:conv_forms}$ such that
\begin{equation*} 
A_{c}(t)  \le \cter{cste:conv_forms} \sum_{n=1}^{N-1}   \chi_{N,\tau}^n(t) \deltat_{\! N} \|\bfu_N^{n} \|_{L^2(\Omega)^3} \|  \tilde{\bfu}_N^{n+1} \|_{L^6(\Omega)^3} \|  \bfvarphi \|_{1,3,N}.
\end{equation*}
 By the discrete Sobolev inequality \cite[Lemma 9.1]{eym-00-book}, there exists $\ctel{cste:sobdis2}\in \xR_+$ depending only on $|\Omega|$ and $\theta_N$ in a nondecreasing way such that (see \cite[Lemma 3.5]{eym-00-book})
\begin{equation*}
\| \bfv \|_{L^6(\Omega)^3} \le \cter{cste:sobdis2} \| \bfv\|_{1,2,\edges},~\text{for any}~\bfv \in \HmeshNzero(\Omega).
\end{equation*}
Therefore, thanks to the boundedness assumptions on $\widetilde \bfu_\nstep$ and $\bfu_\nstep$,
\begin{equation}\label{eq:est_Acdis}
A_{c}(t) \le \cter{cste:est} \cter{cste:sobdis2}  \cter{cste:conv_forms} \|  \bfvarphi \|_{1,3,N}\sum_{n=1}^{N-1}   \chi_{N,\tau}^n(t) \deltat_{\! N} \| \tilde{\bfu}_N^{n+1}  \|_{1,2,N},
\end{equation}
 
Again invoking the discrete Sobolev inequality, there exists $\ctel{cste:sobdis1}\in \xR_+$ only depending on  $|\Omega|$ such that
\begin{equation*}
\| \bfv \|_{L^3(\Omega)^3} \le \cter{cste:sobdis1} \| \bfv\|_{1,3,N},~\text{for any}~\bfv \in \HmeshNzero(\Omega).
\end{equation*}
Consequently, \begin{equation}\label{eq:est_Afdis}
A_{\bff}(t) \le \cter{cste:sobdis1} |\Omega|^{1/6}\| \bfvarphi \|_{1,3,N}  \sum_{n=1}^{N-1}   \chi_{N,\tau}^n(t)\deltat_{\! N} \| \bff_{\! N}^{n+1} \|_{L^2(\Omega)^3}.
\end{equation}
 
 Thanks to the fact that $\bfvarphi \in \bfE_{\! N}(\Omega)$ and to the discrete duality property stated in \cite[Lemma 2.4]{gal-18-convmac}, $A_p(t) =0$.

Summing Equations \eqref{eq:est_Addis}, \eqref{eq:est_Afdis}, \eqref{eq:est_Acdis} we obtain
$$
A(t) \le C  \| \bfvarphi \|_{1,3,N} \sum_{n=1}^{N-1}   \chi_{N,\tau}^n(t) \deltat_{\! N} ( \| \tilde \bfu_N^{n+1} \|_{1,2,N} + \|  \bff_{\! N}^{n+1} \|_{L^2(\Omega)^3})
$$
where $C = |\Omega|^{1/6} +\cter{cste:sobdis1}|\Omega|^{1/6}  +\cter{cste:est} \cter{cste:sobdis2}  \cter{cste:conv_forms}$.
This implies 
$$
|\tilde  \bfu_N(t+\tau) -  \tilde \bfu_N(t) |_{\ast,1,N} \le C \sum_{n=1}^{N-1}  \chi_{N,\tau}^n(t) \deltat_{\! N}( \| \tilde \bfu_N^{n+1} \|_{1,2,N} + \| \bff_{\! N}^{n+1} \|_{L^2(\Omega)^3}).
$$
Using the fact that $\sum_{n=1}^{N-1}  \chi_{N,\tau}^n(t) \deltat_{\! N} \le \tau +\deltat_{\! N} $ for any $t \in (0,T-\tau)$ we then obtain
$$
|\tilde  \bfu(t+\tau) -  \tilde \bfu(t) |_{\ast,1,N}^2 \le 2 C^2 (\tau +\deltat_{\! N})  \sum_{n=1}^{N-1} \chi_{N,\tau}^n(t) \deltat_{\! N} ( \| \tilde \bfu_N^{n+1} \|_{1,2,N}^2 + \|\bff_{\! N}^{n+1}\|_{L^2(\Omega)^3}^2).
$$
Using the fact that $ \int_0^{T-\tau} \chi_{N,\tau}^n(t) \dt \le \tau $ for any $n \in \llbracket 1,N-1 \rrbracket$ we obtain
\begin{multline*}
\int_0^{T-\tau} |\tilde  \bfu_N(t+\tau) -  \tilde \bfu_N(t) |_{\ast,1,N}^2 \dt  \\  \le 2 C^2 (\tau +\deltat_{\! N})  \sum_{n=1}^{N-1} \deltat_{\! N} ( \| \tilde \bfu_N^{n+1} \|_{1,2,N}^2 + \|\bff_{\! N}^{n+1} \|_{L^2(\Omega)^3}^2) \int_0^{T-\tau}  \chi_{N,\tau}^n(t) \dt \\
\le   2 C^2 (\tau +\deltat_{\! N}) \tau ( \| \tilde \bfu_N \|_{L^2(0,T:\HmeshNzero(\Omega))}^2 + \| \bff \|_{L^2((0,T) \times \Omega)^3}^2) 
\le \cter{cste:esttrans1} \tau (\tau +\deltat_{\! N})
\end{multline*}
which gives the expected result.
\end{proof}

For $\bfv =(v_1,v_2,v_3) \in (L^2(\Omega)^3$,  we define $\widetilde{\mathcal{P}}_N \bfv$ as the vector function with piecewise constant components:  the $i$-th component of $\widetilde{\mathcal{P}}_N \bfv$ is constant on each dual cell $D_\edge$, $\edge \in \edges$, and equal to the mean value of $v_i$ on the face $\edge$.  
By \cite[Lemma 3.7]{gal-18-convmac}, $\widetilde{\mathcal{P}}_N$ is a Fortin operator in the sense that it preserves the divergence; in particular,  
\[  
    \bfv \in \bfE(\Omega)\Longrightarrow \widetilde{\mathcal P}_N \bfv \in \bfE_{\! N}(\Omega).
\]

\begin{lemma}[Lions-like, fully discrete version]
\label{lem:lionsdis}
Consider a  rectangular domain $\Omega$ of $\xR^3$ and a sequence of MAC grids $(\mathcal{D}_N)_{N \ge 1}$ of $\Omega$ satisfying \eqref{hyp:mesh} such that $h_N \to 0$ as $N\to + \infty$  and such that the mesh regularity parameter $\theta_N$ defined by \eqref{eqdef:thetaN} remains bounded.
Then, for any $\varepsilon >0$, there exists $C_{\varepsilon} > 0$ and $N_\varepsilon \ge 1$ depending on $\varepsilon$ such that for any $N \ge N_\varepsilon$ and for any $\bfw \in \HmeshNzero(\Omega)$, \eqref{ineq:lionsdis} is satisfied.
\end{lemma}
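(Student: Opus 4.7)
The plan is to mimic the proof of Lemma \ref{lem:lions}, replacing the continuous Leray projection by the discrete orthogonal projection onto $\bfE_{N}(\Omega)$ and using the Fortin operator $\widetilde{\mathcal P}_N$ to connect the discrete and continuous sides. Assume the conclusion fails for some $\varepsilon>0$: then there exist sequences $N_k\to+\infty$ and $\bfw_k\in\bfH_{N_k,0}(\Omega)$ with
\[
|\bfw_k|_{\ast,0,N_k} > \varepsilon\,\|\bfw_k\|_{1,2,N_k} + k\,|\bfw_k|_{\ast,1,N_k}.
\]
Setting $\bfv_k := \mathcal P_{\bfE_{N_k}(\Omega)}\bfw_k$ and normalizing by homogeneity, one may assume, thanks to \eqref{ast0N-PE}, that $\|\bfv_k\|_{L^2(\Omega)^3}=|\bfw_k|_{\ast,0,N_k}=1$; this gives $\|\bfw_k\|_{1,2,N_k}\le1/\varepsilon$ and $|\bfw_k|_{\ast,1,N_k}\to 0$. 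By the discrete Rellich-type compactness theorem \cite[Theorem 3.1]{eym-10-sto} already used in Step~1 of Theorem \ref{theo:conv-dis}, up to extraction $\bfw_k\to\bfw$ strongly in $L^2(\Omega)^3$ with $\bfw\in H^1_0(\Omega)^3$, and, up to a further extraction, $\bfv_k\rightharpoonup\bar\bfv$ weakly in $L^2(\Omega)^3$.

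The next step identifies $\bar\bfv$. For any $\bfvarphi\in\bfW(\Omega)\subset\bfE(\Omega)\cap W_0^{1,3}(\Omega)^3$, the Fortin interpolate $\widetilde{\mathcal P}_{N_k}\bfvarphi$ lies in $\bfE_{N_k}(\Omega)$ by \cite[Lemma 3.7]{gal-18-convmac}; under the bounded mesh regularity assumption it also satisfies $\widetilde{\mathcal P}_{N_k}\bfvarphi\to\bfvarphi$ in $L^2(\Omega)^3$ together with the uniform bound $\|\widetilde{\mathcal P}_{N_k}\bfvarphi\|_{1,3,N_k}\le C\|\bfvarphi\|_{W_0^{1,3}(\Omega)^3}$. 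The orthogonality $\bfw_k-\bfv_k\perp\bfE_{N_k}(\Omega)$ in $L^2(\Omega)^3$ and the definition of $|\cdot|_{\ast,1,N_k}$ then yield
\[
\int_\Omega \bfv_k\cdot\widetilde{\mathcal P}_{N_k}\bfvarphi\dx
= \int_\Omega \bfw_k\cdot\widetilde{\mathcal P}_{N_k}\bfvarphi\dx
\le |\bfw_k|_{\ast,1,N_k}\,\|\widetilde{\mathcal P}_{N_k}\bfvarphi\|_{1,3,N_k}\to 0,
\]
and passing to the limit by weak-times-strong convergence in $L^2$ gives $\int_\Omega\bar\bfv\cdot\bfvarphi\dx = 0$ for every $\bfvarphi\in\bfW(\Omega)$. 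A parallel but simpler argument, testing $\bfv_k\in\bfE_{N_k}(\Omega)$ against the discrete gradient of the piecewise constant $\mesh_{N_k}$-interpolate of any $\xi\in C^1(\bar\Omega)$ (whose discrete gradient converges to $\gradi\xi$ in $L^2$), shows $\int_\Omega\bar\bfv\cdot\gradi\xi\dx = 0$ for all $\xi\in H^1(\Omega)$, i.e.\ $\bar\bfv\in\bfV(\Omega)$. Lemma \ref{lem:carac-grad} then forces $\bar\bfv=\gradi\zeta$ for some $\zeta\in H^1(\Omega)$, and $\bar\bfv\in\bfV(\Omega)$ gives $\|\bar\bfv\|_{L^2(\Omega)^3}^2 = \int_\Omega\bar\bfv\cdot\gradi\zeta\dx = 0$, so $\bar\bfv = 0$.

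The main obstacle, as in the continuous case, is that weak convergence of $(\bfv_k)$ to $0$ is not enough to contradict $\|\bfv_k\|_{L^2(\Omega)^3}=1$; strong $L^2$ convergence is needed. This is obtained from the projection identity
\[
\|\bfv_k\|_{L^2(\Omega)^3}^2
= \int_\Omega \bfv_k\cdot\mathcal P_{\bfE_{N_k}(\Omega)}\bfw_k\dx
= \int_\Omega \bfv_k\cdot\bfw_k\dx,
\]
whose right-hand side pairs the weakly convergent $(\bfv_k)$ with the strongly convergent $(\bfw_k)$ in $L^2(\Omega)^3$ and therefore tends to $\int_\Omega\bar\bfv\cdot\bfw\dx = 0$. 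This contradicts $\|\bfv_k\|_{L^2(\Omega)^3}=1$ and completes the argument. The two technical ingredients used above — uniform $W_0^{1,3}$-type boundedness and $L^2$ convergence of $\widetilde{\mathcal P}_{N_k}\bfvarphi$ toward $\bfvarphi$ for smooth $\bfvarphi$ — are routine under the bounded mesh regularity assumption and follow the lines of \cite[Lemma 3.7]{gal-18-convmac}.
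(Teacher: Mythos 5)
Your proof is correct and follows essentially the same route as the paper: contradiction, normalization via \eqref{ast0N-PE}, discrete compactness from \cite[Theorem 3.1]{eym-10-sto}, testing against the Fortin interpolates $\widetilde{\mathcal P}_{N_k}\bfvarphi$ of functions in $\bfW(\Omega)$, Lemma \ref{lem:carac-grad}, and the projection identity $\|\bfv_k\|_{L^2(\Omega)^3}^2=\int_\Omega\bfv_k\cdot\bfw_k\dx$ to upgrade weak to strong convergence. The only (harmless) organizational difference is that you work directly with the weak limit of $\bfv_k=\mathcal P_{\bfE_{N_k}(\Omega)}\bfw_k$ and show it vanishes, which lets you fold in the content of the paper's auxiliary Lemma \ref{lem:contprojedges} without having to identify that limit as $\mathcal P_{\bfV(\Omega)}\bfw$.
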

\begin{proof}
Let $\varepsilon >0$; let us show by contradiction that there exists $C_{\varepsilon} >0$ and $N_\varepsilon \ge 1$ depending on $\varepsilon$ such that for any $N \ge N_\varepsilon$ and for any $\bfw \in \HmeshNzero(\Omega)$
\begin{equation*}
   | \bfw |_{\ast,0,N} \le \varepsilon \Vert \bfw\Vert_{1,2,N} + C_{\varepsilon} |\bfw |_{\ast,1,N}.
\end{equation*}
 
Suppose that this is not so, then there exist $\varepsilon >0$ and a subsequence of MAC grids of $\Omega$ still denoted by $(\mathcal{D}_N)_{N \ge 1}$ and a sequence  $(\bfw_N)_{N \ge 1}$ of functions such that $\bfw_N \in \HmeshNzero(\Omega)$ for any  $N \ge 1$ and, thanks to \eqref{ast0N-PE},
\begin{equation*}
     \| \mathcal{P}_{\bfE_{\! N}(\Omega)} \bfw_N \|_{L^2(\Omega)^3}  = | \bfw_N |_{\ast,0,N} > \varepsilon \Vert \bfw_{N} \Vert_{1,2,N} + N |\bfw_{N}|_{\ast,1,N},~\text{for any}~N \ge 1.
\end{equation*}
By a homogeneity argument, we may choose $\Vert  \mathcal{P}_{\bfE_{\! N}(\Omega)} \bfw_N\Vert_{L^2(\Omega)^3} = 1$; it then follows from the latter inequality that the sequence $(\| \bfw_{N} \|_{1,2,N})_{N \ge 1}$ is bounded and that $|\bfw_{N}|_{\ast,1,N} \to 0$ as $\Nti$.
Hence there exists a subsequence still denoted by $ (\bfw_{N})_{N \ge 1} $ that converges in $L^2(\Omega)^3$ to a function $\bfw \in H_0^1(\Omega)^3$, see e.g. \cite[Theorem 3.1]{eym-10-sto}.
Lemma \ref{lem:contprojedges} given below then yields that $\mathcal{P}_{\bfE_{\! N}(\Omega)} \bfw_N \to \mathcal{P}_{\bfV(\Omega)} \bfw$  in  $L^2(\Omega)^3$ and in particular $\| \mathcal{P}_{\bfV(\Omega)} \bfw \|_{L^2(\Omega)^3}=1$. 
(Recall that $\mathcal{P}_{\bfV(\Omega)} : L^2(\Omega)^3 \to L^2(\Omega)^3$ is the orthogonal projection in $L^2(\Omega)^3$ onto the space $\bfV(\Omega)$.)

For any $\bfvarphi \in \bfW(\Omega)$, we have $\widetilde{\mathcal P}_{N}(\bfvarphi) \in \bfE_{\! N}(\Omega)$.
Since $\bfw_N - \mathcal{P}_{\bfE_{\! N}(\Omega)} \bfw_N \perp \bfE_{\! N}$ and by definition of $|\bfw_N|_{\ast,1,N}$, it follows that 
\[
  \int_\Omega \mathcal{P}_{\bfE_{\! N}(\Omega)} \bfw_N \cdot \widetilde{\mathcal P}_{N}(\bfvarphi) =  \int_\Omega  \bfw_N \cdot \widetilde{\mathcal P}_{N}(\bfvarphi) \dx \le |\bfw_N|_{\ast,1,N}  |\widetilde{\mathcal P}_{N}(\bfvarphi)|_{1,3,N}.
  \]
By the $W^{1,q}$ stability of the operator $\widetilde{\mathcal P}_{N}$ stated in \cite[Theorem 1]{gal-12-w1q}, there exists $\ctel{cste:pedgessigma}$ only depending on $|\Omega|$ and on $\theta_N$ in a nondecreasing way, such that
\[
  \int_\Omega \mathcal{P}_{\bfE_{\! N}(\Omega)} \bfw_N \cdot \widetilde{\mathcal P}_{N}(\bfvarphi) =  \int_\Omega  \bfw_N \cdot \widetilde{\mathcal P}_{N}(\bfvarphi) \dx \le \cter{cste:pedgessigma} |\bfw_N|_{\ast,1,N} \Vert \bfvarphi \Vert_{W^{1,3}_0(\Omega)^3}.
\]
\begin{equation*}
\| \widetilde{\mathcal{P}}_N \bfvarphi \|_{1,3,N} \le \cter{cste:pedgessigma} \| \bfvarphi \|_{W^{1,3}_0(\Omega)^3},~\text{for any}~\bfvarphi \in W^{1,3}_0(\Omega)^3.
\end{equation*}
 
Passing to the limit in this inequality yields that 
\[
  \int_\Omega \mathcal{P}_{\bfV(\Omega)} \bfw \cdot \bfvarphi \dx = 0,~\text{for any}~ \bfvarphi \in \bfW(\Omega).
\]
This in turn implies that there exists $\xi \in H^1(\Omega)$ such that $ \mathcal{P}_{\bfV(\Omega)} \bfw= \gradi \xi$. Using the fact that $ \mathcal{P}_{\bfV(\Omega)} \bfw \in \bfV(\Omega)$ we have
$$
\|  \mathcal{P}_{\bfV(\Omega)} \bfw \|_{L^2(\Omega)^3}^2 = \int_\Omega \mathcal{P}_{\bfV(\Omega)} \bfw \cdot \gradi \xi \dx = 0,
$$ 
which contradicts $\Vert  \mathcal{P}_{\bfV(\Omega)} \bfw \Vert_{L^2(\Omega)^3} = 1$.
\end{proof}

\begin{lemma}\label{lem:contprojedges}
Let  $N \ge 1$ and let $\mathcal{D}_N=(\mesh_N,\edges_N)$ be a MAC grid of $\Omega$ in the sense of \eqref{hyp:mesh}, such that $(h_{N})_{\nstep \ge 1}$ converges to zero and such that  $(\theta_{N})_{\nstep \ge 1}$ is bounded, with $\theta_N$ defined by \eqref{eqdef:thetaN}. 
Let $(\bfv_N)_{N \ge 1}$ be a sequence of functions such that $\bfv_N \in \HmeshNzero(\Omega)$ for any $N \ge 1$ and $(\bfv_N)_{N\ge 1}$ converges to $\bfv $ in $L^2(\Omega)^3$. 
Then the sequence $(\mathcal{P}_{\bfE_{\! N}(\Omega)}\bfv_N)_{N \ge 1}$ converges to $\mathcal{P}_{\bfV(\Omega)} \bfv$ in $L^2(\Omega)^3$.
\end{lemma}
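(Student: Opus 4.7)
The plan is to exploit the discrete Helmholtz decomposition provided by Lemma \ref{lem:decompdis}: for each $N$, write
\[
\bfv_N = \mathcal{P}_{\bfE_{\! N}(\Omega)} \bfv_N + \nabla_{\! N} \psi_N,
\]
with $\psi_N \in L_N(\Omega)$ normalized by $\int_\Omega \psi_N \dx = 0$. Thanks to the discrete duality \cite[Lemma 2.4]{gal-18-convmac}, this decomposition is $L^2$-orthogonal, so
\[
\|\mathcal{P}_{\bfE_{\! N}(\Omega)} \bfv_N\|_{L^2(\Omega)^3}^2 + \|\nabla_{\! N}\psi_N\|_{L^2(\Omega)^3}^2 = \|\bfv_N\|_{L^2(\Omega)^3}^2.
\]
Since $\bfv_N \to \bfv$ in $L^2$, both terms on the left-hand side are uniformly bounded, and a Neumann-type discrete Poincaré--Wirtinger inequality applied to the mean-zero $\psi_N$ further controls $\|\psi_N\|_{L^2(\Omega)}$ uniformly.

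Next, I would apply a discrete compactness result in the spirit of \cite[Theorem 3.1]{eym-10-sto}, in its Neumann version for primal mean-zero scalar fields, to extract a subsequence such that $\psi_N \to \psi$ strongly in $L^2(\Omega)$ with $\psi \in H^1(\Omega)$ and $\nabla_{\! N}\psi_N \rightharpoonup \nabla \psi$ weakly in $L^2(\Omega)^3$. Independently, the uniform bound on $\mathcal{P}_{\bfE_{\! N}(\Omega)} \bfv_N$ yields, up to a further extraction, $\mathcal{P}_{\bfE_{\! N}(\Omega)} \bfv_N \rightharpoonup \bfw$ weakly in $L^2(\Omega)^3$, and passing to the limit in the discrete decomposition produces $\bfv = \bfw + \nabla \psi$ in $L^2(\Omega)^3$.

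To identify $\bfw$, I would test against smooth scalars. Given $\xi \in C^1(\overline{\Omega})$, define $\xi_N \in L_N(\Omega)$ by $\xi_N(\bfx) = \xi(\bfx_K)$ for $\bfx \in K$; standard consistency estimates for the MAC discrete gradient imply $\nabla_{\! N}\xi_N \to \nabla \xi$ strongly in $L^2(\Omega)^3$. Since $\mathcal{P}_{\bfE_{\! N}(\Omega)} \bfv_N \in \bfE_{\! N}(\Omega)$, the characterization of $\bfE_{\! N}(\Omega)$ recalled before \eqref{eq:semi-weakcordis} gives $\int_\Omega \mathcal{P}_{\bfE_{\! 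N}(\Omega)} \bfv_N \cdot \nabla_{\! N}\xi_N \dx = 0$; the weak--strong pairing then yields $\int_\Omega \bfw \cdot \nabla \xi \dx = 0$, and density of $C^1(\overline{\Omega})$ in $H^1(\Omega)$ extends this to every $\xi \in H^1(\Omega)$, so $\bfw \in \bfV(\Omega)$. Uniqueness of the continuous Helmholtz decomposition $\bfv = \bfw + \nabla \psi$ then identifies $\bfw = \mathcal{P}_{\bfV(\Omega)} \bfv$; in particular the limit does not depend on the extraction, so the whole sequence $(\mathcal{P}_{\bfE_{\! N}(\Omega)} \bfv_N)_{N \ge 1}$ converges weakly to $\mathcal{P}_{\bfV(\Omega)} \bfv$ in $L^2(\Omega)^3$.

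To upgrade weak to strong $L^2$ convergence, I would invoke convergence of norms. At the limit, the $L^2$-orthogonality of $\bfw$ and $\nabla \psi$ gives $\|\bfv\|_{L^2}^2 = \|\bfw\|_{L^2}^2 + \|\nabla \psi\|_{L^2}^2$; combined with the discrete orthogonality, with $\|\bfv_N\|_{L^2} \to \|\bfv\|_{L^2}$, and with the weak lower semi-continuity $\liminf_N \|\nabla_{\! N}\psi_N\|_{L^2} \ge \|\nabla \psi\|_{L^2}$, this forces $\limsup_N \|\mathcal{P}_{\bfE_{\! N}(\Omega)} \bfv_N\|_{L^2} \le \|\bfw\|_{L^2}$, and the reverse inequality follows from weak lower semi-continuity. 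Norm convergence together with weak convergence then delivers strong $L^2$ convergence, concluding the proof. The main technical obstacle is the discrete compactness plus weak gradient-convergence step for the potentials $\psi_N$: one must identify the weak limit of $\nabla_{\! N}\psi_N$ with $\nabla \psi$ using a Neumann-type discrete Poincaré inequality and a Rellich-type theorem for mean-zero primal scalars, which are standard under the mesh-regularity assumption $\theta_N$ bounded but deserve careful invocation from the MAC literature.
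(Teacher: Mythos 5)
Your argument is correct, but it takes a genuinely different route from the paper. You run the proof through the discrete Helmholtz potentials $\psi_N$: you need a discrete Poincar\'e--Wirtinger inequality and a Rellich-type compactness theorem for the mean-zero primal scalars in order to extract $\psi_N \to \psi$ in $L^2(\Omega)$ and to identify the weak limit of $\nabla_{\! N}\psi_N$ with $\nabla\psi$, $\psi \in H^1(\Omega)$; these tools are indeed available for admissible finite volume meshes with $\theta_N$ bounded, but they are an extra layer of machinery, and the identification of the weak limit of $\nabla_{\! N}\psi_N$ (via the adjoint relation with the discrete divergence of interpolated test fields) is exactly the kind of step that ``deserves careful invocation,'' as you say. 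The paper never touches the potentials. It extracts a weak $L^2$ limit $\tilde\bfv$ of $\mathcal{P}_{\bfE_{\! N}(\Omega)}\bfv_N$, shows $\tilde\bfv \in \bfV(\Omega)$ by testing against $\nabla_{\! N}\Pi_N\varphi$ and using the $O(h_N)$ consistency of the discrete gradient (essentially your step with $\xi_N$), identifies $\tilde\bfv = \mathcal{P}_{\bfV(\Omega)}\bfv$ by testing against $\widetilde{\mathcal P}_N\bfvarphi$ for divergence-free $\bfvarphi$ (exploiting $\bfv_N - \mathcal{P}_{\bfE_{\! N}(\Omega)}\bfv_N \perp \bfE_{\! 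N}(\Omega)$ together with the density result of Lemma \ref{lem:denV}), and then upgrades weak to strong convergence with the single identity $\Vert \mathcal{P}_{\bfE_{\! N}(\Omega)}\bfv_N\Vert_{L^2(\Omega)^3}^2 = \int_\Omega \bfv_N\cdot\mathcal{P}_{\bfE_{\! N}(\Omega)}\bfv_N\dx$, whose right-hand side converges by weak--strong pairing to $\Vert\mathcal{P}_{\bfV(\Omega)}\bfv\Vert_{L^2(\Omega)^3}^2$. This last step replaces your two-level Pythagoras plus weak lower semicontinuity argument and is where the paper's proof is distinctly leaner; your version buys nothing extra here, though it does make the discrete-to-continuous Helmholtz correspondence explicit, which has some pedagogical value. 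If you keep your route, you should cite precise discrete Neumann Poincar\'e and compactness statements (e.g.\ from \cite{eym-00-book}) for the primal mesh rather than leaving them as ``standard.''
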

\begin{proof}
Using the fact that $(\bfv_N)_{N\ge 1}$  is bounded in $L^2(\Omega)^3$ we obtain that the sequence $(\mathcal{P}_{\bfE_{\! N}(\Omega)}\bfv_N)_{N \ge 1}$ is bounded in $L^2(\Omega)^3$. 
Hence there exists a subsequence still denoted by $(\mathcal{P}_{\bfE_{\! N}(\Omega)}\bfv_N)_{N \ge 1}$ that  converges to a function $\tilde \bfv$ weakly in $L^2(\Omega)^3$. 
Thanks to the discrete duality property stated in \cite[Lemma 2.4]{gal-18-convmac}, we have, for any $\varphi \in C_c^\infty(\xR^3)$, 
\[
\int_\Omega \mathcal{P}_{\bfE_{\! N}(\Omega)}\bfv_N \cdot \nabla_{\! N} \Pi_N \varphi \dx = 0,~\text{for any}~N \ge 1,
\]
where $\Pi_N\varphi$ is the piecewise constant function defined by $\Pi_N   \varphi (\bfx)= \dfrac 1 {|K|}\displaystyle \int_K \varphi \dx$ for all $\bfx \in K$, $K \in \mesh_N$.
The discrete gradient $\nabla_{\! N}$ is consistent in the sense of \cite[Lemme 2.3]{gal-18-convmac} and therefore there exists $\ctel{consgrad} \in \xR_+$ depending only on $\Omega$ and on $\theta_N$ in a nondecreasing way, such that 
$$
\left| \int_\Omega \mathcal{P}_{\bfE_{\! N}(\Omega)}\bfv_N \cdot \gradi  \varphi \dx \right| \le \cter{consgrad} h_N  \| \mathcal{P}_{\bfE_{\! N}(\Omega)}\bfv_N \|_{L^2(\Omega)^3} \| \gradi^2 \varphi \|_{L^\infty(\Omega)^{3 \times 3}},~\text{for any}~N \ge 1.
$$
Passing to the limit in the previous identity gives
$$
\int_\Omega \tilde \bfv \cdot \gradi \varphi \dx=0,~\text{for any}~\varphi \in C_c^\infty(\xR^3).
$$
We then obtain that $\tilde \bfv \in \bfV(\Omega)$.
Since $\widetilde{\mathcal{P}}_{N}$ preserves the divergence \cite[Lemma 3.7]{gal-18-convmac}, the following identity holds for any $ \bfvarphi \in \bfV(\Omega) \cap C_c^1(\Omega)^3$ 
$$
  \int_\Omega \bfv_N \cdot \widetilde{\mathcal{P}}_{N} \bfvarphi \dx  =\int_\Omega \mathcal{P}_{\bfE_{\! N}(\Omega)}\bfv_N \cdot \widetilde{\mathcal{P}}_{N} \bfvarphi \dx,~\text{for any}~N \ge 1.
$$
Passing to the limit in the previous identity gives
$$
\int_\Omega \bfv \cdot \bfvarphi \dx = \int_\Omega \tilde \bfv \cdot \bfvarphi \dx~\text{for any}~\bfvarphi \in \bfV(\Omega).
$$
We then obtain that $ \tilde \bfv = \mathcal{P}_{\bfV(\Omega)} \bfv$ and the sequence $(\mathcal{P}_{\bfE_{\! N}(\Omega)}\bfv_N)_{N \ge 1}$ converges to $\mathcal{P}_{\bfV(\Omega)} \bfv$ weakly in $L^2(\Omega)^3$. We can write
$$
\| \mathcal{P}_{\bfE_{\! N}(\Omega)}\bfv_N \|_{L^2(\Omega)^3}^2
= \int_\Omega \bfv_N \cdot \mathcal{P}_{\bfE_{\! N}(\Omega)}\bfv_N \dx,~\text{for any}~N \ge 1.
$$
Using the convergence of the sequence $(\bfv_N)_{N \ge 1}$ to $\bfv$ in $L^2(\Omega)^3$ and the weak convergence of the sequence $(\mathcal{P}_{\bfE_{\! N}(\Omega)}\bfv_N)_{N \ge 1}$ to $\mathcal{P}_{\bfV(\Omega)} \bfv$ in $L^2(\Omega)^3$ we obtain
$$
\lim_{\Nti} \| \mathcal{P}_{\bfE_{\! N}(\Omega)}\bfv_N \|_{L^2(\Omega)^3}^2 = \int_\Omega \bfv \cdot \mathcal{P}_{\bfV(\Omega)} \bfv \dx = \| \mathcal{P}_{\bfV(\Omega)} \bfv \|_{L^2(\Omega)^3}^2.
$$
The weak convergence of the sequence $(\mathcal{P}_{\bfE_{\! N}(\Omega)}\bfv_N)_{N \ge 1}$ to $\mathcal{P}_{\bfV(\Omega)}$ in $L^2(\Omega)^3$  and convergence of the sequence $( \| \mathcal{P}_{\bfE_{\! N}(\Omega)}\bfv_N \|_{L^2(\Omega)^3})_{N \ge 1}$ to $ \| \mathcal{P}_{\bfV(\Omega)} \bfv \|_{L^2(\Omega)^3}$ gives the expected result.
\end{proof}

Since the predicted velocities are bounded in the $\Vert \cdot \Vert_{1,2,N}$ norm (see Lemma \ref{lem:estdis}), their $|\cdot|_{\ast,0,N}$ semi-norm is controlled by their $|\cdot|_{\ast,1,N}$ semi-norm thanks to Lemma \ref{lem:lionsdis}.  
As in Lemma \ref{lem:trans-utildesemidis}, we can therefore obtain an estimate on the time translates for the $L^2(0,T;L^2(\Omega)^3)$ norm, and, as a consequence, the $L^2(0,T;L^2(\Omega)^3)$ convergence of the predicted velocities.

\begin{lemma}\label{lem:trans-utildedis}
Under the assumptions of Theorem \ref{theo:conv-dis} the sequence $(\tilde \bfu_{N})_{N \ge 1}$ satisfies 
\[ 
    \int_0^{T-\tau} \Vert \tilde \bfu_{N}(t+\tau) - \tilde \bfu_{N}(t) \Vert_{L^2(\Omega)^3}^2 \dt \to 0 \mbox{ as } \tau \to 0,
\]
uniformly with respect to $N$, and is therefore relatively compact in $L^2(0,T;L^2(\Omega)^3)$.
\end{lemma}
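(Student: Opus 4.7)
The plan is to mirror the argument of the semi-discrete Lemma \ref{lem:trans-utildesemidis}, with each continuous object replaced by its fully discrete analogue: the space $\bfV(\Omega)$ by $\bfE_{\! N}(\Omega)$, the semi-norms $|\cdot|_{\ast,0}$ and $|\cdot|_{\ast,1}$ by $|\cdot|_{\ast,0,N}$ and $|\cdot|_{\ast,1,N}$, and the Lions-like Lemma \ref{lem:lions} by its discrete version Lemma \ref{lem:lionsdis}. First I would split, via the triangle inequality,
\[
\int_0^{T-\tau}\|\tilde\bfu_N(t+\tau)-\tilde\bfu_N(t)\|_{L^2(\Omega)^3}^2\dt \le 2A_N(\tau)+2B_N(\tau),
\]
where $A_N(\tau) = \int_0^{T-\tau}\|(\tilde\bfu_N-\bfu_N)(t+\tau)-(\tilde\bfu_N-\bfu_N)(t)\|_{L^2(\Omega)^3}^2\dt$ and $B_N(\tau)$ is the corresponding integral for $\bfu_N$. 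Thanks to \eqref{diffdisutu}, $A_N(\tau) \to 0$ as $\tau\to 0$, uniformly in $N$. To handle $B_N(\tau)$, I would exploit that $\bfu_N(t)\in\bfE_{\! N}(\Omega)$ and express $\|\bfu_N(t+\tau)-\bfu_N(t)\|_{L^2(\Omega)^3}$ as a supremum over unit vectors of $\bfE_{\! N}(\Omega)$; inserting $\pm\tilde\bfu_N$ and using the definition \eqref{etoile-zero-d} then gives
\[
B_N(\tau) \le 2A_N(\tau) + 2\int_0^{T-\tau}|\tilde\bfu_N(t+\tau)-\tilde\bfu_N(t)|_{\ast,0,N}^2\dt.
\]

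Next, given $\varepsilon>0$, I would apply the discrete Lions-like Lemma \ref{lem:lionsdis} to $\bfw=\tilde\bfu_N(t+\tau)-\tilde\bfu_N(t)$, square it, integrate in time, and combine the uniform bound \eqref{eq:est-dis-tildeu} from Lemma \ref{lem:estdis} with the translate estimate of Lemma \ref{lem:transdis}. For $N\ge N_\varepsilon$ this should produce an inequality of the form
\[
\int_0^{T-\tau}|\tilde\bfu_N(t+\tau)-\tilde\bfu_N(t)|_{\ast,0,N}^2\dt \le 8\,\cter{cste:estdis}^2\,\varepsilon^2 + 2\,C_\varepsilon^2\,\cter{cste:transdis}\,\tau(\tau+\deltat_{\! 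N}).
\]
A standard $\varepsilon$-then-$\tau$ selection then yields the required uniform-in-$N$ smallness of $B_N(\tau)$ on the tail $N\ge N_\varepsilon$. Relative compactness of $(\tilde\bfu_N)_{N\ge 1}$ in $L^2(0,T;L^2(\Omega)^3)$ follows from a Kolmogorov-type criterion together with the $L^\infty(0,T;L^2(\Omega)^3)$ bound \eqref{eq:est-dis-u}, and by \eqref{diffdisutu} the same holds for $(\bfu_N)_{N\ge 1}$.

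The main delicate point I anticipate is the $N$-dependent threshold $N_\varepsilon$ in Lemma \ref{lem:lionsdis}, which has no counterpart in the semi-discrete setting. It will be handled by observing that for each of the finitely many indices $N<N_\varepsilon$ the function $\tilde\bfu_N$ belongs to $L^2(0,T;L^2(\Omega)^3)$, so its $L^2$ time translates trivially tend to zero as $\tau\to 0$; taking a common threshold $\tau_0$ over this finite set and the bound obtained above restores the uniformity in $N$.
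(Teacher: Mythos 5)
Your proposal is correct and follows essentially the same route as the paper: the same triangle-inequality splitting into $A_N(\tau)$ and $B_N(\tau)$, the same dualization over $\bfE_{\! N}(\Omega)$ leading to the $|\cdot|_{\ast,0,N}$ translate integral, the same combination of Lemma \ref{lem:lionsdis} with Lemmas \ref{lem:estdis} and \ref{lem:transdis}, and the same disposal of the finitely many indices $N<N_\varepsilon$ via the trivial convergence of translates for each fixed $N$ (which the paper handles in its closing sentence). Your explicit flagging of the $N_\varepsilon$ threshold as the one genuinely new point relative to the semi-discrete case matches the paper's treatment exactly.
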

\begin{proof}
We follow the proof of Lemma \ref{lem:trans-utildesemidis}.
By the triangle inequality, 
 \begin{align*}
  &\int_0^{T-\tau} \Vert \tilde \bfu_\nstep(t+\tau) - \tilde \bfu_\nstep(t) \Vert_2^2 \dt \le 2(A_\nstep(\tau) + B_\nstep(\tau)), \mbox{ with } \\
  & A_\nstep(\tau) =  \int_0^{T-\tau} \Vert (\tilde \bfu_\nstep -\bfu_\nstep)(t+\tau) - (\tilde \bfu_\nstep-\bfu_\nstep)(t) \Vert_2^2 \dt, \\
  & B_\nstep(\tau) =  \int_0^{T-\tau} \Vert   \bfu_\nstep(t+\tau) - \bfu_\nstep(t) \Vert_2^2 \dt. 
 \end{align*}
For any $N$,  $A_\nstep(\tau) \to 0$ as $\tau \to 0$, but owing to \eqref{eq:differencedis-u-utilde}, we get that $A_\nstep(\tau) \to 0$ as $\tau \to 0$, uniformly with respect to $N$.
Let us prove that this is also the case for $B_\nstep(\tau) \to 0$.

Since $\bfu_\nstep(t) \in \bfE_{\! N}(\Omega)$ for any $t \in (0,T)$ we have for any $t \in (0,T-\tau)$ 
\begin{align*}\Vert  \bfu_\nstep(t+\tau) -  \bfu_\nstep(t) \Vert_{L^2(\Omega)^3} &=\sup_{\substack{\bfv \in  \bfE_{\! N}(\Omega) \\  \Vert \bfv\Vert_{L^2(\Omega)^3} = 1}}\int_\Omega \left(\bfu_\nstep(t+\tau) -  \bfu_\nstep(t)\right) \cdot \bfv \dx  \\
 &\le \Vert  (\bfu_\nstep - \tilde \bfu_\nstep)(t+\tau) -  (\bfu_\nstep - \tilde \bfu_\nstep)(t) \Vert_{L^2(\Omega)^3} + \\ &\hspace{.2cm}\sup_{\substack{\bfv \in  \bfE_{\! N}(\Omega) \\  \Vert \bfv\Vert_{L^2(\Omega)^3} = 1}}
\int_\Omega (\tilde \bfu_\nstep(t+\tau) -  \tilde \bfu_\nstep(t)) \cdot \bfv \dx,
  \end{align*}
so that 
\[
B_N(\tau)  \le 2 A_\nstep(\tau) +  2 \int_0^{T-\tau} |\tilde  \bfu_\nstep(t+\tau) -  \tilde \bfu_\nstep(t) |_{\ast,0,N}^2 \dt.
\]

Now thanks to Lemma \ref{lem:lionsdis}, for any $\varepsilon >0$, there exists $C_\varepsilon  >0$ and $N_\varepsilon \ge 1$ such that for any $\nstep \ge N_\varepsilon$ and for any $t \in (0,T-\tau)$
\begin{multline*}
 |\tilde  \bfu_\nstep(t+\tau) -  \tilde \bfu_\nstep(t) |_{\ast,0,N} \le \varepsilon \Vert \tilde  \bfu_\nstep(t+\tau) -  \tilde \bfu_\nstep(t) \Vert_{1,2,N} \\+ C_\varepsilon |\tilde  \bfu_\nstep(t+\tau) -  \tilde \bfu_\nstep(t) |_{\ast,1,N}, 
\end{multline*}
In particular for any $N \ge N_\varepsilon $ and for any $\tau \in (0,T)$ we have
\begin{multline*}
 \int_0^{T-\tau} |\tilde  \bfu_\nstep(t+\tau) -  \tilde \bfu_\nstep(t) |^2_{\ast,0,N} \dt  \le 2 \varepsilon^2 \int_0^{T-\tau} \Vert \tilde  \bfu_\nstep(t+\tau) -  \tilde \bfu_\nstep(t) \Vert_{1,2,N}^2 \dt \\ + 2C_\varepsilon^2 \int_0^{T-\tau} |\tilde  \bfu_\nstep(t+\tau) -  \tilde \bfu_\nstep(t) |^2_{\ast,1,N} \dt.
\end{multline*}

Therefore, owing to lemmas \ref{lem:estdis} and \ref{lem:transdis}, for any $ \nstep \ge N_\varepsilon$ and for any $\tau \in (0,T)$

\begin{equation*}
\int_0^{T-\tau} |\tilde  \bfu_\nstep(t+\tau) -  \tilde \bfu_\nstep(t) |^2_{\ast,0,N} \dt 
\le 8 \cter{cste:estdis}^2 \varepsilon^2  + 2C_\varepsilon^2  \cter{cste:transdis} \tau (\tau+\deltat_{\! N}).
\end{equation*}
Hence, for any $ \nstep \ge N_\varepsilon$ and for any $\tau \in (0,T)$,
\[ 
B_\nstep(\tau) \le 2 A_\nstep(\tau) +  16 \cter{cste:estdis} \varepsilon^2 + 4 C_\varepsilon^2 \cter{cste:transdis} \tau (\tau+\deltat_{\! N}).
\]
 Let $\zeta > 0$ be given, and let:
\begin{itemize}
 \item  $\tau_0 > 0$ such that for any $\tau \in (0,\tau_0)$, $2 A_\nstep(\tau) \le \zeta$ for any $N \ge 1$.
 \item $\varepsilon > 0$ such that $ 16 \cter{cste:estdis}^2 \varepsilon^2 \le \zeta$,
\item  $\tilde \tau_0 > 0$ such that $ 2 C_{\varepsilon}^2 \cter{cste:transdis} \tau (\tau+\deltat_{\! N}) \le \zeta$ for any $ \tau \in (0,\tilde \tau_0) $ and $N \ge 1$.
\end{itemize}
We then obtain that $B_\nstep(\tau) \le 3 \zeta$ for any $ \tau \in (0, \min(\tau_0,\tilde \tau_0)) $ and  $N \ge N_{\varepsilon}$. 
Using the fact that $B_\nstep(\tau) \to 0$ as $\tau \to 0$ for any $N \ge 1$ we obtain that $B_\nstep(\tau)  \to 0$  as $\tau \to 0$, uniformly with respect to $N$. 
The proof of Lemma \ref{lem:trans-utildedis} is thus complete.
\end{proof}

\subsection{Convergence towards the weak solution} \label{sec:dis-weaksol}
Now that we have proven that the approximate velocities $(\tilde \bfu_N)_{N \ge 1}$ and $(\bfu_N)_{N \ge 1}$ converge
in $L^2(0,T;L^2(\Omega)^3)$, up to a subsequence, to a common limit $\bar \bfu\in L^2(0,T;H^1_0(\Omega)^3)$, there remains to show, as in the semi-discrete case, that $\bar \bfu$ is a weak solution to \eqref{pb:cont} in the sense of Definition \ref{def:weaksol}.

\begin{lemma}[Lax-Wendroff consistency of the discrete scheme]\label{lem:weaksoldis}
Let $(\tilde \bfu_N)_{N \ge 1}$ and $(\bfu_N)_{N \ge 1}$ $ \subset L^2(0,T;L^2(\Omega)^3)$ be sequences of solutions to the fully discrete scheme \eqref{eq:scheme} for $N \in \xN$ (see Definition \ref{def:exis-dis}), and assume that $\bar \bfu \in (L^2(0,T;H^1_0(\Omega)^3)$ is such that $\tilde \bfu_N \to \bar \bfu$ in $L^2(0,T;L^2(\Omega)^3)$  and $\tilde \bfu_N \to \bar \bfu$ weakly in $L^2(0,T;L^2(\Omega)^3)$ as $N\to + \infty$, and that the sequence $(\tilde \bfu_N)_{N \ge 1}$ is bounded in the $L^2(\Vert \cdot\Vert_{1,2,N})$ norm. 
Then the function $\bar \bfu$ is a weak solution to \eqref{pb:cont} in the sense of Definition \ref{def:weaksol}.
\end{lemma}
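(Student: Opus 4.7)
The plan is to follow exactly the same strategy as for the semi-discrete case (Lemma \ref{lem:weaksolsemidis}), replacing each continuous test function by a suitable discrete interpolate and invoking the consistency of the MAC operators established in \cite{gal-18-convmac}. Given $\bfvarphi \in \bigl\{\bfw \in C_c^\infty([0,T)\times\Omega)^3, \ \dive \bfw = 0\bigr\}$, I would set $\bfvarphi_N^n = \widetilde{\mathcal{P}}_N \bfvarphi(t_N^n,\cdot)$, which by \cite[Lemma 3.7]{gal-18-convmac} lies in $\bfE_{\! N}(\Omega)$, so that the discrete gradient term in \eqref{eq:discretepredis} will vanish via the discrete duality \cite[Lemma 2.4]{gal-18-convmac}. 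I would then take the $L^2$ inner product of \eqref{eq:discretepredis} with $\deltat_{\! N}\,\bfvarphi_N^n$, sum over $n \in \llbracket 1, N-1 \rrbracket$, and perform the discrete integration by parts in time on the first term, exactly as in \eqref{eq:weaksolsemidisRvarphi}.

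The passage to the limit proceeds term by term. For the time-derivative contribution, the discrete Abel transformation produces $-\int_0^T \!\!\int_\Omega \tilde\bfu_N \cdot \partial_t^{(N)} \bfvarphi_N \dx\dt - \int_\Omega \tilde \bfu_N^1 \cdot \bfvarphi_N^0 \dx$, and the $L^2$ convergence of $\tilde\bfu_N$ to $\bar\bfu$ combined with the uniform convergence of the piecewise constant time interpolate of $\bfvarphi$ and its time difference quotient yields the expected $-\int_0^T\!\!\int_\Omega \bar\bfu \cdot \partial_t \bfvarphi \dx\dt - \int_\Omega \bfu_0 \cdot \bfvarphi(0,\cdot) \dx$, using the continuity of $\widetilde{\mathcal{P}}_N$ and the fact that $\widetilde{\mathcal{P}}_N \bfvarphi(0,\cdot) \to \bfvarphi(0,\cdot)$ in $L^2$. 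For the diffusion term, the discrete gradient $\gradi_N \tilde\bfu_N$ converges weakly in $L^2((0,T)\times\Omega)^{3\times 3}$ to $\gradi \bar\bfu$ (by the step-1 discussion already in the proof of Theorem \ref{theo:conv-dis}), and $\gradi_N \bfvarphi_N$ converges strongly in $L^2$ to $\gradi\bfvarphi$ by consistency of the discrete MAC gradient \cite[Lemma 2.3]{gal-18-convmac}; hence the product passes to the limit. The source term is routine since $\bff_{\! N} \to \bff$ strongly and $\bfvarphi_N \to \bfvarphi$ strongly in $L^2$.

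The main obstacle, as usual, is the nonlinear convection term $-\sum_{n,t} \deltat_{\! N} \chi_{N,\tau}^n(t)$-free integral $\int_\Omega \bfC_N(\tilde\bfu_N^{n+1})\bfu_N^n \cdot \bfvarphi_N^n \dx$, which must be shown to tend to $-\int_0^T\!\!\int_\Omega \bar\bfu \otimes \bar\bfu : \gradi\bfvarphi \dx\dt$. The key is that this MAC trilinear form admits a discrete integration-by-parts identity on the dual cells; after rewriting it in non-conservative form using the discrete mass balance $\dive_N \bfu_N^n = 0$, it can be re-cast as a discrete analogue of $-\int_\Omega \tilde\bfu_N^{n+1} \otimes \bfu_N^n : \gradi_N \bfvarphi_N^n \dx$ up to a remainder that is $O(h_N)$ in $L^1$ by the regularity of $\bfvarphi$. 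This consistency of the MAC convection trilinear form is precisely the content of the analysis in \cite[Section 3]{gal-18-convmac} and can be invoked directly. Once this is in hand, one combines the strong $L^2$ convergence of both $\tilde\bfu_N$ and $\bfu_N$ to $\bar\bfu$ with the uniform convergence of $\gradi_N \bfvarphi_N$ to $\gradi \bfvarphi$ to conclude, and the identity \eqref{weaksol} follows by passing to the limit in every term.
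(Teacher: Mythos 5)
Your proposal follows essentially the same route as the paper: test \eqref{eq:discretepredis} against $\deltat_{\! N}\,\widetilde{\mathcal P}_N\bfvarphi(t_N^n,\cdot)\in\bfE_{\! N}(\Omega)$ so the pressure term vanishes by discrete duality, perform the Abel summation in time using $\bfvarphi_N^N=0$ and $\bfu_N^0=\widetilde{\mathcal P}_N\bfu_0$, and pass to the limit in the diffusion, convection and source terms by the consistency results of \cite{gal-18-convmac} (the paper simply delegates these three limits to the proof of \cite[Theorem 4.3]{gal-18-convmac}, which is exactly what you describe in more detail). The only blemish is the garbled expression involving $\chi_{N,\tau}^n$ in your discussion of the convection term, which belongs to the time-translate lemmas and not here, but the substance of that paragraph is correct.
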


\begin{proof}
Let $\bfvarphi \in C_c^\infty([0,T) \times \Omega)^3$, such that $\dive \bfvarphi =0$ in $\Omega$.
Using ($\ref{scheme:pred_int}$) and ($\ref{scheme:cor_int}$) to obtain ($\ref{eq:discretepredis}$) we have for $n \in \llbracket 1,N-1 \rrbracket$
\begin{equation*}
\frac{1}{\deltat_{\! N}}( \tilde \bfu_N^{n+1} - \tilde \bfu_N^n) + \bfC_{\! N}(\tilde \bfu_N^{n+1})\bfu_N^n + \nabla_{\! N} (2 p_N^n - p_N^{n-1}) - \Delta_{\edges_N} \tilde \bfu_N^{n+1} = \bff_{\! N}^{n+1}.
\end{equation*}
By Lemma \cite[Lemma 3.7]{gal-18-convmac} we have $\dive_{\! N}\, \widetilde{\mathcal P}_N \bfvarphi(t_N^n,\cdot)=0$. 
We set $\bfvarphi_N^n = \widetilde{\mathcal P}_N \bfvarphi(t_N^n,\cdot) \in \bfE_{\! N}(\Omega)$ and multiply the previous identity by $\deltat_{\! N} \bfvarphi_N^{n}$,  integrate over $\Omega$, and sum over $n \in \llbracket 1,N-1 \rrbracket$. 
This yields
\begin{multline}\label{eq:weaksoldisRvarphi}
\sum_{n=1}^{N-1} \int_\Omega ( \tilde \bfu_N^{n+1} - \tilde \bfu_N^n)  \cdot \bfvarphi_N^n \dx \dt
+\sum_{n=1}^{N-1}\deltat_{\! N}  b_N( \bfu_N^n,\tilde \bfu_N^{n+1},\bfvarphi_N^n)
\\
+\sum_{n=1}^{N-1} \deltat_{\! N} \int_\Omega \gradi_{\edgesd_N}\bfu_N^{n+1} : \gradi_{\edgesd_N}\bfvarphi_N^n \dx 
= \sum_{n=1}^{N-1} \deltat_{\! N} \int_\Omega \bff_{\! N}^{n+1} \cdot \bfvarphi_N^n \dx.
\end{multline}
Using the fact that $\bfvarphi_N^{N}=0$ in $\Omega$ the first term of the left hand side reads
\begin{multline*}
 \sum_{n=1}^{N-1} \int_\Omega (\tilde \bfu_N^{n+1} - \tilde \bfu_N^n)  \cdot \bfvarphi_N^n \dx \dt \\
 = - \sum_{n=1}^{N-1} \int_\Omega  \tilde \bfu_N^{n+1}\cdot ( \bfvarphi_N^{n+1} - \bfvarphi_N^n) \dx \dt - \int_\Omega \tilde \bfu_N^1 \cdot \bfvarphi_N^0 \dx. 
\end{multline*}
Since $ \bfvarphi_N^0 = \widetilde{\mathcal{P}}_{N}(\bfvarphi(0,\cdot)) \in \bfE_{\! N}(\Omega) $ and, owing to \eqref{scheme:init}, $\bfu_N^0=  \widetilde{\mathcal{P}}_{N}\bfu_0 $, so that
\begin{multline*}
 \sum_{n=1}^{N-1} \int_\Omega \left(\tilde \bfu_N^{n+1} - \tilde \bfu_N^n\right)  \cdot \bfvarphi_N^n \dx = - \sum_{n=1}^{N-1} \int_\Omega  \tilde \bfu_N^{n+1}\cdot (\bfvarphi_N^{n+1} - \bfvarphi_N^n) \dx \dt \\  - \int_\Omega \widetilde{\mathcal{P}}_{N} \bfu_0 \cdot \widetilde{\mathcal{P}}_N(\bfvarphi(0,\cdot))  \dx  - \int_\Omega  (\tilde \bfu_N^1 -  \bfu_N^0) \cdot \widetilde{\mathcal{P}}_N(\bfvarphi(0,\cdot))  \dx.
\end{multline*}
The regularity  of $\bfvarphi$ implies that
$$
\lim_{\Nti} \sum_{n=1}^{N-1} \dfrac {\bfvarphi_N^{n+1} - \bfvarphi_N^n} {\deltat_{\! N}} \ \Char_{(t_N^\nalgo,t_N^{n+1}]}(\cdot) =  \partial_t \bfvarphi~\text{in}~L^\infty((0,T) \times \Omega)^3.$$
Using the weak convergence of the sequence $(\tilde \bfu_N)_{N \ge 1}$ in $L^2(0,T;L^2(\Omega)^3)$, the uniform convergence of the sequence $(\widetilde{\mathcal{P}}_N(\bfvarphi(0,\cdot)))_{N \ge 1}$, the convergence in $L^2(\Omega)$ of the sequence $(\widetilde{\mathcal{P}}_N \bfu_0)_{N \ge 1}$ and \eqref{diffdisutu}, we obtain
\begin{equation}\label{eq:convweakstep1dis}
\lim_{\Nti}\sum_{n=1}^{N-1} \int_\Omega ( \tilde \bfu_N^{n+1} - \tilde \bfu_N^n)  \cdot \bfvarphi_N^n \dx \dt = -\int_0^T \int_\Omega \bar \bfu \cdot \partial_t \bfvarphi \dx\dt 
- \int_\Omega \bfu_{0} \cdot \bfvarphi(0,\cdot) \dx.
\end{equation}
Finally, the proof that
\begin{equation}\label{eq:convweakstep2dis}
\lim_{\Nti} \sum_{n=1}^{N-1} \deltat_{\! N} \int_\Omega \gradi_{\edgesd_N}\tilde \bfu_N^{n+1} : \gradi_{\edgesd_N}\bfvarphi_N^n \dx = \int_0^T \int_\Omega \gradi \bar \bfu : \gradi \bfvarphi \dx \dt,
\end{equation}
\begin{equation}\label{eq:convweakstep3dis}
\lim_{\Nti} \sum_{n=1}^{N-1} \deltat_{\! N}\, b_N(\bfu_N^n,\tilde \bfu_N^{n+1},\bfvarphi_N^n) \to - \int_0^T \int_\Omega \bar \bfu \otimes \bar \bfu : \gradi \bfvarphi \dx \dt,
\end{equation}
and  
\begin{equation}\label{eq:convweakstep4dis}
\lim_{\Nti} \sum_{n=1}^{N-1} \deltat_{\! N} \int_\Omega \mathcal  \bff_{\! N}^{n+1} \cdot \bfvarphi_N^n \dx \dt
= \int_0^T \int_\Omega \bff \cdot \bfvarphi \dx\dt.
\end{equation}
follows the proof of the convergence of the equivalement terms in the proof of\cite[Theorem 4.3]{gal-18-convmac}.
Using ($\ref{eq:convweakstep1dis}$)-($\ref{eq:convweakstep4dis}$) and passing to the limit in ($\ref{eq:weaksoldisRvarphi}$) gives the expected result.
\end{proof}
 
\appendix

\section{Some technical lemmas}
\begin{lemma}[Existence and estimate for the linearized equation] \label{lem:existpre}
Let $\Omega$ be an open bounded connected subset of $\xR^3$ with Lipschitz boundary.
Let $\alpha >0$ and let $\bfu \in \bfV(\Omega)$, $p \in L^2(\Omega)$ and $\bff \in L^2(\Omega)^3$. 
There  exists $\tilde \bfu \in H_0^1(\Omega)^3$ such that 
\begin{multline}\label{eq:existpre}
\alpha \int_\Omega \tilde \bfu \cdot \bfv \dx -  \int_\Omega \tilde \bfu \otimes \bfu : \gradi \bfv \dx + \int_\Omega \gradi \tilde \bfu : \gradi  \bfv \dx  \\
= \alpha \int_\Omega  \bfu \cdot \bfv \dx + \int_\Omega p \dive \bfv \dx + \int_\Omega \bff \cdot \bfv \dx~\text{for any}~ \bfv \in C_c^1(\Omega)^3.
\end{multline}
Moreover $\tilde \bfu$ satisfies 
\begin{multline}\label{eq:existpreest}
\frac{\alpha}{2} \Vert \tilde \bfu\Vert_{L^2(\Omega)^3}^2 -\frac{\alpha}{2}  \Vert \bfu\Vert_{L^2(\Omega)^3}^2  +  \frac{\alpha}{2} \Vert \tilde \bfu- \bfu\Vert_{L^2(\Omega)^3}^2  \\  - \int_\Omega p \dive \bfu \dx  + \Vert  \tilde \bfu\Vert_{H_0^1(\Omega)^3}^2  \le  \int_\Omega \bff \cdot \tilde \bfu\dx. 
\end{multline}
\end{lemma}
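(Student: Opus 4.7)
The plan is to establish existence by a Galerkin approximation in $H^1_0(\Omega)^3$, and then to derive the energy estimate \eqref{eq:existpreest} by passing to the limit in the energy identity of the approximate problems. The subtlety is that because $\bfu$ is only in $L^2(\Omega)^3$, the convection form $(\tilde\bfu,\bfv)\mapsto \int_\Omega \tilde\bfu\otimes \bfu:\gradi \bfv\,\mathrm{d}\bfx$ is not continuous on $H^1_0(\Omega)^3\times H^1_0(\Omega)^3$: indeed, Hölder's inequality only gives $\tilde\bfu\otimes \bfu\in L^{3/2}$, while $\gradi \bfv\in L^2$; this product need not be in $L^1$. This is precisely why in the statement the test functions are taken in $C^1_c(\Omega)^3$, which is compatible with the regularity of the data.

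The first step is to pick a Galerkin basis $(\bfw_k)_{k\ge 1}\subset C^\infty_c(\Omega)^3$ of $H^1_0(\Omega)^3$ and to look for $\tilde\bfu^n=\sum_{k=1}^n c_k^n \bfw_k$ satisfying \eqref{eq:existpre} for every $\bfv=\bfw_j$, $j\in\llbracket 1,n\rrbracket$. This is a square linear system in the coefficients $(c_k^n)$. Its invertibility follows from the uniqueness of the trivial solution of the homogeneous system: testing with $\tilde\bfu^n$ itself (which lies in $C^\infty_c(\Omega)^3$ since each basis element does) one uses that $|\tilde\bfu^n|^2\in H^1(\Omega)$ together with the definition of $\bfV(\Omega)$ to obtain
\[
\int_\Omega \tilde\bfu^n\otimes \bfu:\gradi \tilde\bfu^n\,\mathrm{d}\bfx=\frac{1}{2}\int_\Omega \bfu\cdot \gradi|\tilde\bfu^n|^2\,\mathrm{d}\bfx=0,
\]
so the homogeneous energy identity forces $\alpha\|\tilde\bfu^n\|_{L^2}^2+\|\gradi \tilde\bfu^n\|_{L^2}^2=0$ and hence $\tilde\bfu^n=0$. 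This is the one place where the divergence-freeness of $\bfu$ is used in an essential way; it is the key reason the argument works despite the low regularity of $\bfu$.

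The second step is to derive a uniform bound on $\tilde\bfu^n$ in $H^1_0(\Omega)^3$. Applying the same cancellation with the non-homogeneous right-hand side yields
\[
\alpha\|\tilde\bfu^n\|_{L^2(\Omega)^3}^2+\|\gradi \tilde\bfu^n\|_{L^2(\Omega)^3}^2
=\alpha\int_\Omega \bfu\cdot \tilde\bfu^n\,\mathrm{d}\bfx+\int_\Omega p\,\dive \tilde\bfu^n\,\mathrm{d}\bfx+\int_\Omega \bff\cdot \tilde\bfu^n\,\mathrm{d}\bfx,
\]
and Cauchy--Schwarz, Young's inequality and Poincaré's inequality give an $H^1_0$-bound on $(\tilde\bfu^n)_{n\ge 1}$ depending only on $\alpha$, $\|\bfu\|_{L^2}$, $\|p\|_{L^2}$ and $\|\bff\|_{L^2}$.

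The third step is to extract a subsequence $(\tilde\bfu^n)$ converging weakly in $H^1_0(\Omega)^3$ and strongly in $L^2(\Omega)^3$ to a limit $\tilde\bfu\in H^1_0(\Omega)^3$, and to pass to the limit in \eqref{eq:existpre} for every fixed $\bfv\in C^1_c(\Omega)^3$. The linear terms pass trivially, and in the convection term, because $\bfv$ is fixed and smooth, the quantity $\bfu\cdot\gradi\bfv^T$ is in $L^2$, so strong $L^2$ convergence of $\tilde\bfu^n$ suffices. The limit $\tilde\bfu$ thus satisfies \eqref{eq:existpre}.

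For the final step, the estimate \eqref{eq:existpreest} cannot be obtained by direct testing of \eqref{eq:existpre} with $\tilde\bfu$, since $\tilde\bfu$ is not an admissible test function (it is only in $H^1_0$). Instead, it is obtained by passing to the limit in the energy identity of the Galerkin problem: weak lower semicontinuity of $\|\cdot\|_{L^2}$ and of $\|\gradi \cdot\|_{L^2}$ controls the left-hand side, the right-hand side converges by weak-strong duality, and then the algebraic identity
\[
\alpha\|\tilde\bfu\|_{L^2(\Omega)^3}^2-\alpha\int_\Omega \bfu\cdot \tilde\bfu\,\mathrm{d}\bfx
=\frac{\alpha}{2}\|\tilde\bfu\|_{L^2(\Omega)^3}^2-\frac{\alpha}{2}\|\bfu\|_{L^2(\Omega)^3}^2+\frac{\alpha}{2}\|\tilde\bfu-\bfu\|_{L^2(\Omega)^3}^2
\]
rearranges the resulting inequality into exactly \eqref{eq:existpreest}. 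The main conceptual obstacle of the whole proof is the interplay between the low $L^2$-regularity of $\bfu$ and the control of the convection term; this is resolved cleanly by working with smooth Galerkin approximants and invoking the full strength of the assumption $\bfu\in\bfV(\Omega)$.
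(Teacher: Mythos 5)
Your Galerkin strategy is a genuinely different route from the paper's, and most of it is sound: the cancellation $\int_\Omega \tilde\bfu^n\otimes\bfu:\gradi\tilde\bfu^n\dx=\tfrac12\int_\Omega\bfu\cdot\gradi|\tilde\bfu^n|^2\dx=0$ is correctly justified from $|\tilde\bfu^n|^2\in H^1(\Omega)$ and the definition of $\bfV(\Omega)$, and the uniform bound and the limit passage in the energy identity are fine. However, there is one genuine gap, located exactly at the difficulty you flag in your opening paragraph. The Galerkin method delivers the equation only for $\bfv$ in the linear span of $(\bfw_k)_{k\ge1}$; to conclude \eqref{eq:existpre} for an arbitrary $\bfv\in C^1_c(\Omega)^3$ you must approximate $\bfv$ by elements of that span, and since $\tilde\bfu\otimes\bfu$ only lies in $L^{3/2}(\Omega)^{3\times3}$ (from $\tilde\bfu\in L^6$, $\bfu\in L^2$), the convection term is continuous in $\bfv$ for the $W^{1,3}_0$ topology but \emph{not} for the $H^1_0$ topology. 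A basis of $H^1_0(\Omega)^3$ is only guaranteed to have $H^1_0$-dense span, so your final assertion that ``the limit $\tilde\bfu$ thus satisfies \eqref{eq:existpre}'' does not follow as written. The fix is cheap but must be stated: choose $(\bfw_k)\subset C^\infty_c(\Omega)^3$ whose span is dense in $W^{1,3}_0(\Omega)^3$ (hence also in $H^1_0(\Omega)^3$); then every term of \eqref{eq:existpre} is $W^{1,3}_0$-continuous in $\bfv$ for fixed $\tilde\bfu\in H^1_0(\Omega)^3$ and $\bfu\in L^2(\Omega)^3$, and the extension to all of $C^1_c(\Omega)^3$ goes through.

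For comparison, the paper sidesteps this issue by approximating the datum rather than the unknown: by the density result of Lemma \ref{lem:denV} it takes $\bfu_n\in\bfV(\Omega)\cap C^1_c(\Omega)^3$ with $\bfu_n\to\bfu$ in $L^2(\Omega)^3$, so that for each $n$ the problem with $\bfu_n$ in the convection term is governed by a continuous and coercive bilinear form on $H^1_0(\Omega)^3\times H^1_0(\Omega)^3$ and Lax--Milgram yields a solution $\tilde\bfu_n$ satisfying the equation against \emph{every} $\bfv\in H^1_0(\Omega)^3$; the restriction to $C^1_c$ test functions only intervenes when passing to the limit in the product $\tilde\bfu_n\otimes\bfu_n\to\tilde\bfu\otimes\bfu$ in $L^1$. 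Both routes rest on the same two ingredients (a skew-symmetry cancellation and weak lower semicontinuity for the estimate); yours trades Lemma \ref{lem:denV} for a more careful choice of Galerkin basis. A last remark: your energy inequality naturally produces the term $-\int_\Omega p\,\dive\tilde\bfu\dx$, whereas \eqref{eq:existpreest} displays $-\int_\Omega p\,\dive\bfu\dx$ with the merely $L^2$ field $\bfu$; the latter is a misprint in the statement (compare the discrete analogue \eqref{eq:existpreestdis} and the use of the lemma in Lemma \ref{lem:estsemidis}), so the version you obtain is the intended one.
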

\begin{proof}
Using Lemma \ref{lem:denV} there exists a sequence   $(\bfu_n)_{n \ge 0} $  of functions of $\bfV(\Omega) \cap C_c^1(\Omega)^3 $ converging to $\bfu$  in $L^2(\Omega)^3$. 
Consider the following problem:
\begin{align}
\nonumber
& \mbox{Find }\tilde \bfu_n \in H_0^1(\Omega)^3 \mbox{ such that }\\
\label{eq:weaktildeun}
& \alpha \int_\Omega \tilde \bfu_n \cdot \bfv \dx - \int_\Omega \tilde \bfu_n \otimes \bfu_n : \gradi \bfv + \int_\Omega \gradi \tilde \bfu_n : \gradi \bfv \dx  \\
\nonumber
& \qquad = \alpha \int_\Omega  \bfu_n \cdot \bfv \dx + \int_\Omega p \dive \bfv \dx + \int_\Omega \bff \cdot \bfv \dx,~\text{for any}~ \bfv \in H_0^1(\Omega)^3.
\end{align}
By the Lax-Milgram theorem, there exists a unique $\tilde \bfu_n \in H_0^1(\Omega)^3$ to this problem; indeed, the left hand-side of \eqref{eq:weaktildeun} is a bilinear continuous and coercive form on $H_0^1(\Omega)^3 \times H_0^1(\Omega)^3$ because
\begin{equation}\label{skew}
 \int_\Omega \tilde \bfu \otimes \bfu : \gradi \tilde \bfu \dx = 0,~\text{for any}~(\tilde \bfu,\bfu) \in H_0^1(\Omega)^3 \times \bfE(\Omega);
\end{equation}
moreover the right hand side in ($\ref{eq:weaktildeun}$) is a linear continuous form on $H_0^1(\Omega)^3 $. 

Take $\bfv = \tilde \bfu_n$ in \eqref{eq:weaktildeun}; owing to \eqref{skew}, we get
$$
\alpha \| \tilde \bfu_n \|_{L^2(\Omega)^3}^2 + \|  \tilde \bfu_n \|_{H_0^1(\Omega)^3}^2 = \alpha \int_\Omega  \bfu_n \cdot \tilde \bfu_n \dx + \int_\Omega p \dive \tilde \bfu_n \dx + \int_\Omega \bff \cdot \tilde \bfu_n \dx
$$
which implies 
\begin{multline}\label{eq:weaktildeunest}
\frac{\alpha}{2} \| \tilde \bfu_n \|_{L^2(\Omega)^3}^2  -\frac{\alpha}{2}  \Vert \bfu_n \Vert_{L^2(\Omega)^3}^2  +  \frac{\alpha}{2} \Vert \widetilde \bfu_n- \bfu_n\Vert_{L^2(\Omega)^3}^2 + \|  \tilde \bfu_n \|_{H_0^1(\Omega)^3}^2 \\  =  \int_\Omega p \dive \tilde \bfu_n \dx + \int_\Omega \bff \cdot \tilde \bfu_n,~\text{for any}~n \ge 0.
\end{multline}
Therefore, by the Young inequality, the sequence $(\tilde \bfu_n)_{n \ge 0}$ is bounded in $H_0^1(\Omega)^3$ and in particular passing to a subsequence converges to $\tilde \bfu \in H_0^1(\Omega)^3$  in $L^2(\Omega)^3$ and weakly in $H_0^1(\Omega)^3$.  The convergence in $L^2(\Omega)^3$ of the sequence $(\bfu_n)_{n \ge 0}$ gives
$$
\lim_{\nti}\int_\Omega \tilde \bfu_n \cdot \bfv \dx = \int_\Omega \tilde \bfu \cdot \bfv \dx,~\text{for any}~\bfv \in C_c^1(\Omega).
$$
The weak convergence in $H_0^1(\Omega)^3$ of the sequence $(\tilde \bfu_n)_{n \ge 0}$ gives
$$
\lim_{\nti}\int_\Omega \gradi \tilde \bfu_n : \gradi \bfv \dx = \int_\Omega \gradi \tilde \bfu : \gradi \bfv \dx,~\text{for any}~\bfv \in C_c^1(\Omega).
$$
 The convergence in $L^2(\Omega)^3$ of the sequence $(\bfu_n)_{n \ge 0}$ and the  convergence in $L^2(\Omega)^3$ of the sequence $(\tilde \bfu_n)_{n \ge 0}$ gives

$$
\lim_{\nti}\int_\Omega \tilde \bfu_n \otimes \bfu_n : \gradi \bfv \dx = \int_\Omega \tilde \bfu \otimes \bfu : \gradi \bfv \dx,~\text{for any}~\bfv \in C_c^1(\Omega).
$$
 Passing to the limit in ($\ref{eq:weaktildeun}$) with $\bfv \in C_c^1(\Omega)^3$ gives ($\ref{eq:existpre}$).  The weak convergence in $H_0^1(\Omega)^3$ of the sequence $(\tilde \bfu_n)_{n \ge 0}$ gives 
$$
\|  \tilde \bfu \|_{H_0^1(\Omega)^3}^2 \le \liminf_{n \to \infty} \|  \tilde \bfu_n \|_{H_0^1(\Omega)^3}^2
$$
Passing to the limit  ($\ref{eq:weaktildeunest}$) gives ($\ref{eq:existpreest}$) which concludes the proof of Lemma \ref{lem:existpre}.
\end{proof}

Let us now give the decomposition result which was used for the proof of existence of a solution to the correction step \eqref{eq:semi-weakcor}.

\begin{lemma}[Decomposition of $L^2$ vector fields]\label{lem:decomp}
Let $\Omega$ be an open bounded connected subset of $\xR^3$ with Lipschitz boundary.
Then for any $\bfw \in L^2(\Omega)^3 $ there exists  $(\bfv,\psi) \in \bfV(\Omega) \times H^1(\Omega) $ such that $ \bfw =  \bfv + \gradi \psi$. 
\end{lemma}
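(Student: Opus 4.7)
The plan is to obtain the decomposition via the Helmholtz projection associated with a Neumann-type Poisson problem: given $\bfw \in L^2(\Omega)^3$, I would seek $\psi \in H^1(\Omega)$ as the (up to an additive constant) solution of
\begin{equation*}
\int_\Omega \gradi \psi \cdot \gradi \xi \dx = \int_\Omega \bfw \cdot \gradi \xi \dx, \qquad \forall \xi \in H^1(\Omega),
\end{equation*}
and then set $\bfv = \bfw - \gradi \psi$. By the very definition of $\bfV(\Omega)$ recalled in \eqref{def:V}, the identity displayed above rewrites as $\int_\Omega \bfv \cdot \gradi \xi \dx = 0$ for every $\xi \in H^1(\Omega)$, so that $\bfv \in \bfV(\Omega)$, giving the sought decomposition $\bfw = \bfv + \gradi \psi$.

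The existence step reduces to a Lax--Milgram argument on the quotient space $E = H^1(\Omega)/\xR$. On $E$, equipped with the norm $\|[\xi]\|_E = \|\gradi \xi\|_{L^2(\Omega)^3}$, the Poincar\'e--Wirtinger inequality (which uses the hypotheses that $\Omega$ is connected, bounded and Lipschitz) guarantees that this is indeed a norm and that $E$ is a Hilbert space. The bilinear form $a([\psi],[\xi]) = \int_\Omega \gradi \psi \cdot \gradi \xi \dx$ is continuous and coercive on $E \times E$ (in fact it is exactly the squared norm), and the linear form $\xi \mapsto \int_\Omega \bfw \cdot \gradi \xi \dx$ is continuous on $E$ by the Cauchy--Schwarz inequality $|\int_\Omega \bfw \cdot \gradi \xi \dx| \le \|\bfw\|_{L^2(\Omega)^3} \|\gradi \xi\|_{L^2(\Omega)^3}$. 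Observe also that the right-hand side is well defined on the quotient since $\int_\Omega \bfw \cdot \gradi(\xi + c) \dx = \int_\Omega \bfw \cdot \gradi \xi \dx$ for every constant $c$. Lax--Milgram then supplies a unique $[\psi] \in E$ solving the variational problem; picking any representative $\psi \in H^1(\Omega)$ yields the desired potential.

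The only mild subtlety, which I expect to be the main (and essentially routine) point to check, is to make sure one works on the correct quotient space and to invoke the Poincar\'e--Wirtinger inequality rather than the Poincar\'e inequality for $H^1_0$, since the test functions range over $H^1(\Omega)$ (this is what builds in the zero normal trace of $\bfv$ on $\partial\Omega$ and is precisely what makes $\bfv$ land in $\bfV(\Omega)$ as defined by \eqref{def:V}, rather than in a space of merely distributionally divergence-free fields). No uniqueness statement is needed, since the lemma only asserts existence; the decomposition is however automatically unique once $\psi$ is fixed up to a constant, because $\gradi \psi$ is then uniquely determined in $L^2(\Omega)^3$.
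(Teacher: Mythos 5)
Your proposal is correct and follows exactly the same route as the paper: solve the weak Neumann problem $\int_\Omega \gradi\psi\cdot\gradi\xi \dx = \int_\Omega \bfw\cdot\gradi\xi \dx$ for all $\xi\in H^1(\Omega)$ and set $\bfv=\bfw-\gradi\psi$, which lands in $\bfV(\Omega)$ by definition. The only difference is that you spell out the Lax--Milgram argument on $H^1(\Omega)/\xR$ via Poincar\'e--Wirtinger, which the paper leaves implicit in the phrase ``unique, up to a constant''.
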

\begin{proof}
Let $\psi$ be a solution  (unique, up to a constant) of the problem
\begin{align*}
& \psi \in H^1(\Omega),
\\
& \int_\Omega \nabla\psi \cdot \nabla \xi \dx=  \int_\Omega \bfw \cdot \nabla \xi \dx, \textrm{ for any}~  \xi \in H^1(\Omega).
 \end{align*}
Then $\bfw = \bfv+ \nabla \psi $ with $\int_\Omega \bfv \cdot \nabla \xi \dx =0$ for any $\xi \in H^1(\Omega)$.
\end{proof}

The following lemma gives a characterisation of the gradient which is used in the proof of Lemma \ref{lem:lions}. 
Its proof is a simple consequence of a result of M. E. Bogovskii \cite{bogovskii} and refer to the very clear presentation of \cite{crouzeix} for more on this subject.

\begin{lemma}[Characterization of the gradient]\label{lem:carac-grad}
Let $\Omega$ be an open bounded connected subset of $\xR^3$ with Lipschitz boundary.
Let $\bff \in L^2(\Omega)^3$ such that $\int_\Omega \bff \cdot \bfvarphi \dx = 0$ for all  $\bfvarphi \in C_c^\infty(\Omega)^3$ such that $\dive \bfvarphi=0$ in $\Omega$.
Then there exists $\xi \in L^2(\Omega)$ such that $\bff = \nabla \xi$.
\end{lemma}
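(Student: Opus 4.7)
The plan is to construct $\xi$ via a duality argument based on the Bogovskii right-inverse of the divergence operator. Let $L^2_0(\Omega) = \{g \in L^2(\Omega) : \int_\Omega g \dx = 0\}$. Bogovskii's classical theorem (see \cite{bogovskii,crouzeix}) provides, for every $g \in L^2_0(\Omega)$, at least one vector field $\bfv_g \in H^1_0(\Omega)^3$ satisfying $\dive \bfv_g = g$ together with the estimate $\Vert \bfv_g \Vert_{H^1_0(\Omega)^3} \le C_\Omega \Vert g \Vert_{L^2(\Omega)}$, where $C_\Omega$ depends only on $\Omega$.

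First I would define a linear form $L$ on $L^2_0(\Omega)$ by setting $L(g) = -\int_\Omega \bff \cdot \bfv_g \dx$. The crucial point is that this definition is independent of the choice of the lifting $\bfv_g$: if $\bfv_1, \bfv_2 \in H^1_0(\Omega)^3$ both satisfy $\dive \bfv_i = g$, then $\bfv_1 - \bfv_2$ lies in the $H^1_0$-closure of $\{\bfvarphi \in C_c^\infty(\Omega)^3 : \dive \bfvarphi = 0\}$, which on a Lipschitz domain coincides with the full kernel of $\dive$ in $H^1_0(\Omega)^3$ (this density is itself a standard by-product of the Bogovskii theory). The hypothesis on $\bff$ therefore extends by continuity to give $\int_\Omega \bff \cdot (\bfv_1 - \bfv_2) \dx = 0$. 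The Bogovskii bound then yields $|L(g)| \le C_\Omega \Vert \bff \Vert_{L^2(\Omega)^3} \Vert g \Vert_{L^2(\Omega)}$, so $L$ is a continuous linear form on $L^2_0(\Omega)$.

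By the Riesz representation theorem there is a unique $\xi \in L^2_0(\Omega) \subset L^2(\Omega)$ such that $L(g) = \int_\Omega \xi\, g \dx$ for every $g \in L^2_0(\Omega)$. To conclude, I would test this identity against $g = \dive \bfvarphi$ for an arbitrary $\bfvarphi \in C_c^\infty(\Omega)^3$: since $\bfvarphi$ itself is an admissible lifting of $\dive \bfvarphi$ in Bogovskii's problem, the defining relation gives $-\int_\Omega \bff \cdot \bfvarphi \dx = L(\dive \bfvarphi) = \int_\Omega \xi \dive \bfvarphi \dx$, which is precisely the identity $\bff = \nabla \xi$ in the sense of distributions. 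The only nontrivial step in the argument is the well-definedness of $L$, and this is where the Bogovskii / de Rham theory referenced in the hint actually enters; once that ingredient is granted, the remaining manipulations are routine.
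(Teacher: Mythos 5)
Your proof is correct and shares the paper's skeleton: a Bogovskii right inverse of the divergence, a continuous linear form on $L^2_0(\Omega)$, Riesz representation, and testing with $g=\dive\bfvarphi$. The one genuine difference lies in how the hypothesis on $\bff$ is brought to bear. The paper fixes a single continuous operator $\mathcal{B}:L^2_0(\Omega)\to H^1_0(\Omega)^3$, so $T(q)=\int_\Omega \bff\cdot\mathcal{B}(q)\dx$ is well defined with no further argument, and then exploits the specific property that $\mathcal{B}$ maps $C_c^\infty(\Omega)\cap L^2_0(\Omega)$ into $C_c^\infty(\Omega)^3$: the correction $\bfvarphi-\mathcal{B}(\dive\bfvarphi)$ is then itself a smooth, compactly supported, divergence-free field, i.e.\ exactly an admissible test function in the hypothesis, and no density argument is needed. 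You instead allow an arbitrary lifting $\bfv_g$ and must prove well-definedness of $L$, for which you invoke the density of $\{\bfvarphi\in C_c^\infty(\Omega)^3 : \dive\bfvarphi=0\}$, in the $H^1_0$ topology, in the full kernel of $\dive$ in $H^1_0(\Omega)^3$. That density theorem is true on bounded Lipschitz domains and citable, so your argument does close; but it is a nontrivial result whose standard proofs themselves go through a de Rham--type characterization of gradients (for $H^{-1}$ data), i.e.\ a statement of essentially the same depth as the lemma you are proving, so you should cite it as an external theorem rather than describe it as a by-product of Bogovskii's construction. The paper's choice of a fixed, smoothness-preserving $\mathcal{B}$ is precisely what makes its version self-contained. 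A minor point in your favour: your sign convention $L(g)=-\int_\Omega\bff\cdot\bfv_g\dx$ lands directly on $\bff=\nabla\xi$ in the distributional sense.
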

\begin{proof}
We recall that  $L^2_0(\Omega)=\{q \in L^2(\Omega)$ such that $\int_\Omega q(x) \dx=0\}$.
A classical result \cite{bogovskii} gives the existence of an  linear continuous operator $\mathcal{B} : L^2_0(\Omega) \to H^1_0(\Omega)^3$
such that $\dive(\mathcal{B}(q))=q$ a.e. in $\Omega$.
Furthermore $\mathcal{B}(\varphi) \in C_c^\infty(\Omega)^3$ for any $\varphi \in C_c^\infty(\Omega) \cap L_0^2(\Omega)$.

For $q \in L^2_0(\Omega)$ we set $T(q)=\int_\Omega \bff  \cdot \mathcal{B}(q)  \dx$. 
The mapping $T$ is a linear continuous form on $L^2_0(\Omega)$.
There exists  $\xi \in L_0^2(\Omega)$ such that
\[
    T(q) = \int_\Omega \bff  \cdot \mathcal{B}(q)  \dx=\int_\Omega \xi  q  \dx,~\text{for any}~q \in L_0^2(\Omega).
\]
Taking now $\bfvarphi \in C_c^\infty(\Omega)^3$, one has $\dive\bfvarphi \in C_c^\infty(\Omega) \cap L^2_0(\Omega)$ so that
$\bfvarphi - \mathcal{B}(\dive \bfvarphi) \in C_c^\infty(\Omega)^3$ and $\dive( \bfvarphi - \mathcal{B}(\dive \bfvarphi) )=0$ in $\Omega$. 
Then, the hypothesis on $\bff$ gives $\int_\Omega \bff \cdot ( \bfvarphi - \mathcal{B}(\dive \bfvarphi)) \dx = 0$ which leads to
\[
\int_\Omega \bff \cdot \bfvarphi \dx=\int_\Omega \bff \cdot \mathcal{B}(\dive \bfvarphi) \dx=\int_\Omega \xi \dive \bfvarphi\dx,
\]
and we conclude $\nabla \xi=\bff$ (that is the distribution $\nabla \xi$ is the function $f$).
\end{proof}

A consequence of this lemma is the following interesting \emph{per se} density result.
\begin{lemma}[Density of divergence-free functions]\label{lem:denV}
Let $\Omega$ be an open bounded connected subset of $\xR^3$ with Lipschitz boundary. Let $\mathcal{V}(\Omega) = \{ \bfvarphi \in C_c^\infty(\Omega)^3~\text{such that}~\dive \bfvarphi = 0~\text{in}~\Omega\}.$ 
The closure of $\mathcal{V}(\Omega)$ in $L^2(\Omega)^3$ is $\bfV(\Omega)$.
\end{lemma}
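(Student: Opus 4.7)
The plan is to prove the two inclusions $\overline{\mathcal{V}(\Omega)}^{L^2} \subset \bfV(\Omega)$ and $\bfV(\Omega) \subset \overline{\mathcal{V}(\Omega)}^{L^2}$. The first is a direct verification, while the second requires a duality argument based on Lemma \ref{lem:carac-grad}.

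For the easy inclusion, I would first note that if $\bfvarphi \in \mathcal{V}(\Omega)$ and $\xi \in H^1(\Omega)$, then integration by parts (legitimate because $\bfvarphi$ has compact support, so no boundary term appears) gives
\[
\int_\Omega \bfvarphi \cdot \nabla \xi \dx = - \int_\Omega (\dive \bfvarphi)\, \xi \dx = 0,
\]
so $\bfvarphi \in \bfV(\Omega)$. Since $\bfV(\Omega)$ is defined by a family of continuous linear equations on $L^2(\Omega)^3$, it is closed in $L^2(\Omega)^3$, hence $\overline{\mathcal{V}(\Omega)}^{L^2} \subset \bfV(\Omega)$.

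The main part is the reverse inclusion, which I would prove by contradiction using Hahn--Banach. Suppose there exists $\bfu \in \bfV(\Omega)$ which does not lie in the closed subspace $F := \overline{\mathcal{V}(\Omega)}^{L^2}$ of the Hilbert space $L^2(\Omega)^3$. Then there exists $\bff \in F^{\perp} \subset L^2(\Omega)^3$ with $\int_\Omega \bff \cdot \bfu \dx \neq 0$. In particular, $\int_\Omega \bff \cdot \bfvarphi \dx = 0$ for every $\bfvarphi \in C_c^\infty(\Omega)^3$ with $\dive \bfvarphi = 0$. By Lemma \ref{lem:carac-grad}, there exists $\xi \in L^2(\Omega)$ such that $\bff = \nabla \xi$ in the distributional sense. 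Since $\bff \in L^2(\Omega)^3$, this yields $\nabla \xi \in L^2(\Omega)^3$, and combined with $\xi \in L^2(\Omega)$ we conclude $\xi \in H^1(\Omega)$. But then, since $\bfu \in \bfV(\Omega)$, the defining property \eqref{def:V} yields
\[
\int_\Omega \bff \cdot \bfu \dx = \int_\Omega \bfu \cdot \nabla \xi \dx = 0,
\]
contradicting the choice of $\bff$. Hence every $\bfu \in \bfV(\Omega)$ lies in $F$, which proves the claim.

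The main obstacle, which is already removed by Lemma \ref{lem:carac-grad}, is the passage from orthogonality against divergence-free compactly supported test fields to the representation $\bff = \nabla \xi$ with $\xi$ regular enough (here $\xi \in H^1(\Omega)$) to be used as a test function in the definition of $\bfV(\Omega)$. Once that identification is in hand, the Hahn--Banach/Hilbert orthogonality step closes the argument in a couple of lines.
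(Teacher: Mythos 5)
Your proof is correct and follows essentially the same route as the paper: both arguments hinge on Lemma \ref{lem:carac-grad} to identify an $L^2$ field orthogonal to $\mathcal{V}(\Omega)$ with a gradient $\nabla\xi$, $\xi\in H^1(\Omega)$, and then kill it using the defining property \eqref{def:V} of $\bfV(\Omega)$. The only (cosmetic) difference is that the paper takes the orthogonal complement of $\mathcal{V}(\Omega)$ inside the Hilbert space $\bfV(\Omega)$ and shows it is trivial, whereas you separate a hypothetical $\bfu\in\bfV(\Omega)\setminus\overline{\mathcal{V}(\Omega)}$ by an element of the orthogonal complement in all of $L^2(\Omega)^3$; you also spell out the easy inclusion $\overline{\mathcal{V}(\Omega)}\subset\bfV(\Omega)$, which the paper leaves implicit.
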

\begin{proof}
Equipped with the $L^2(\Omega)^3$-norm, the space $\bfV(\Omega)$ is a Hilbert space. 
In order to prove this density result, we prove that, in this Hibert space, $\mathcal{V}(\Omega)^\perp=\{0\}$.

Let $\bfv \in \bfV(\Omega)$ and assume $\bfv \in \mathcal{V}(\Omega)^\perp$.
Then, Lemma \ref{lem:carac-grad} gives the existence of $\xi \in L^2(\Omega)$ such that $\bfv = \nabla \xi$ (and then $\xi \in H^1(\Omega)$).
Since $\bfv \in \bfV(\Omega)$, ones deduces for $\psi \in H^1(\Omega)$
\[
\int_\Omega \nabla \xi \cdot \nabla \psi \dx= \int_\Omega \bfv \cdot \nabla \psi \dx=0.
\]
In particular this gives $\int_\Omega \nabla \xi \cdot \nabla \xi \dx=0$ and then $\bfv=\nabla \xi=0$. 
This proves that $\mathcal{V}(\Omega)$ is dense in $\bfV(\Omega)$.
\end{proof}

\section{Some discrete technical lemmas}

We assume that $\Omega$ is an open rectangular parallelepiped.
In addition, we assume that the edges (respectively the faces) of $\Omega$
are orthogonal to one vector of the canonical basis of $\xR^3$.

\begin{lemma}[Existence and estimate, discrete linearized equation] \label{lem:existpredis}
Let $\disc_N=(\mesh_N,\edges_N)$ be a MAC grid of $\Omega$ indexed by $N \ge 1$, and let $\alpha >0$.
Let $\bfu \in \bfE_{\! N}(\Omega)$, $p \in L_N(\Omega)$ and $\bff_{\! N} \in \HmeshNzero(\Omega)$. 
There  exists a unique   $\tilde \bfu \in\HmeshNzero(\Omega)$ such that 
\begin{multline}\label{eq:weaktildeudiscreteexist}
\alpha \int_\Omega \tilde \bfu \cdot \bfv \dx +b_N(\bfu,\tilde \bfu,\bfv) + \int_\Omega \gradi_N  \tilde \bfu : \gradi_N \bfv \dx  \\
= \alpha \int_\Omega  \bfu \cdot \bfv \dx + \int_\Omega p \dive_{\! N} \bfv \dx + \int_\Omega \bff \cdot \bfv \dx,~\text{for any}~ \bfv \in \HmeshNzero(\Omega).
\end{multline}
Moreover $\tilde \bfu$ satisfies 
\begin{multline}\label{eq:existpreestdis}
\frac{\alpha}{2} \Vert \tilde \bfu\Vert_{L^2(\Omega)^3}^2 -\frac{\alpha}{2}  \Vert \bfu\Vert_{L^2(\Omega)^3}^2  +  \frac{\alpha}{2} \Vert \tilde \bfu- \bfu\Vert_{L^2(\Omega)^3}^2  \\   - \int_\Omega p \dive_{\! N} \tilde \bfu \dx   + \Vert  \tilde \bfu\Vert_{1,2,\edges}^2  \le  \int_\Omega \bff_{\! N} \cdot \tilde \bfu\dx. 
\end{multline}
\end{lemma}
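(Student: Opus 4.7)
The plan is to mirror the proof of the continuous counterpart (Lemma A.1) but exploiting the finite dimensionality of $\HmeshNzero(\Omega)$ to get a one-shot existence proof. Since $\HmeshNzero(\Omega)$ is a finite-dimensional Hilbert space, the problem \eqref{eq:weaktildeudiscreteexist} is equivalent to solving a square linear system; hence, by the rank-nullity theorem, proving uniqueness is enough to obtain existence.

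First, I would recall (or quote from \cite{gal-18-convmac}) the discrete skew-symmetry of the trilinear form: for any $\bfu \in \bfE_{\! N}(\Omega)$ and any $\bfw \in \HmeshNzero(\Omega)$,
\[
b_N(\bfu,\bfw,\bfw) = 0.
\]
This is the discrete analogue of \eqref{skew} and holds by construction of the MAC convection operator $\bfC_{\! N}$, since $\dive_{\! N}\bfu = 0$. Using this identity, the bilinear form
\[
a_N(\bfw,\bfv) = \alpha \int_\Omega \bfw \cdot \bfv \dx + b_N(\bfu,\bfw,\bfv) + \int_\Omega \gradi_{\! N}\bfw : \gradi_{\! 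N}\bfv \dx
\]
is coercive on $\HmeshNzero(\Omega)$: taking $\bfv = \bfw$ yields $a_N(\bfw,\bfw) = \alpha \Vert \bfw\Vert_{L^2(\Omega)^3}^2 + \Vert \bfw\Vert_{1,2,N}^2$. The right-hand side of \eqref{eq:weaktildeudiscreteexist} is obviously a linear continuous form on $\HmeshNzero(\Omega)$, so the Lax--Milgram lemma (or simply the finite-dimensional injectivity-implies-surjectivity argument) produces a unique $\tilde \bfu \in \HmeshNzero(\Omega)$ solving \eqref{eq:weaktildeudiscreteexist}.

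For the energy estimate \eqref{eq:existpreestdis}, I would take $\bfv = \tilde \bfu$ in \eqref{eq:weaktildeudiscreteexist}. The convective contribution vanishes by the skew-symmetry recalled above, which gives
\[
\alpha \int_\Omega \tilde \bfu \cdot \tilde \bfu \dx + \Vert \tilde \bfu\Vert_{1,2,N}^2 = \alpha \int_\Omega \bfu \cdot \tilde \bfu \dx + \int_\Omega p \dive_{\! N}\tilde \bfu \dx + \int_\Omega \bff_{\! N} \cdot \tilde \bfu \dx.
\]
Applying the elementary identity $2\, a(a-b) = a^2 - b^2 + (a-b)^2$ componentwise to the difference of the first two terms then rearranges into \eqref{eq:existpreestdis}.

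The only nontrivial ingredient is the discrete skew-symmetry $b_N(\bfu,\bfw,\bfw) = 0$ for $\bfu \in \bfE_{\! N}(\Omega)$. This however is a classical property of the MAC staggered convection operator and is established in \cite[Section 2]{gal-18-convmac}, so no new argument is needed. Compared with the continuous proof (Lemma \ref{lem:existpre}), one does not need a density/approximation step since everything takes place in a finite-dimensional space, nor any passage to the limit: this is a purely algebraic statement once the skew-symmetry is available.
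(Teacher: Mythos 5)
Your proposal is correct and follows essentially the same route as the paper: coercivity of the bilinear form via the skew-symmetry of $b_N$ (the paper cites \cite[Lemma 3.6]{gal-18-convmac} for this), existence and uniqueness by a Lax--Milgram-type argument, and the energy estimate by testing with $\tilde\bfu$ and using the identity $2a(a-b)=a^2-b^2+(a-b)^2$. The only cosmetic difference is your explicit appeal to finite-dimensionality (injectivity implies surjectivity), which the paper does not need to invoke since coercivity plus continuity already suffices.
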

\begin{proof}
The weak formulation of \eqref{eq:weaktildeudiscreteexist} reads:
\begin{multline}\label{eq:semi-weakpreddis}
\alpha \int_\Omega \tilde \bfu^{\nalgo+1} \cdot \bfv \dx  +b_N(\bfu^n,\tilde \bfu^{n+1},\bfv) + \int_\Omega \gradi_N  \tilde \bfu^{n+1} : \gradi_N \bfv \dx \\
= \alpha \int_\Omega  \bfu^\nalgo \cdot \bfv \dx + \int_\Omega p^\nalgo\ \dive_N \bfv \dx + \int_\Omega \bff_{\! N} \cdot \bfv \dx~\text{for any}~ \bfv \in \Hmeshzero(\Omega).
\end{multline}
This formulation is equivalent to the  form \eqref{scheme:pred_int}. 
The existence of a unique $\tilde \bfu \in \HmeshNzero(\Omega)$ satisfying ($\ref{eq:weaktildeudiscreteexist}$) is consequence of the that fact the left hand-side in ($\ref{eq:weaktildeudiscreteexist}$) is a bilinear continuous and coercive form on $\HmeshNzero(\Omega) \times \HmeshNzero(\Omega)$ and the right hand side in ($\ref{eq:weaktildeudiscreteexist}$) is a linear continuous form on $\HmeshNzero(\Omega)$. 
More precisely the left-hand side is coercive as a consequence of Lemma \cite[Lemma 3.6]{gal-18-convmac}.
We take $\bfv = \tilde \bfu$ in ($\ref{eq:weaktildeudiscreteexist}$) and using \cite[Lemma 3.6]{gal-18-convmac}, we obtain 
$$
\alpha \| \tilde \bfu \|_{L^2(\Omega)^3}^2 + \|  \tilde \bfu \|_{1,2,0}^2 \le \alpha \int_\Omega  \bfu \cdot \tilde \bfu \dx + \int_\Omega p \dive_{\! N} \tilde \bfu \dx + \int_\Omega \bff \cdot \tilde \bfu \dx
$$
which implies
\begin{multline*}
\frac{\alpha}{2} \| \tilde \bfu \|_{L^2(\Omega)^3}^2  -\frac{\alpha}{2}  \Vert \bfu \Vert_{L^2(\Omega)^3}^2  +  \frac{\alpha}{2} \Vert \tilde \bfu- \bfu\Vert_{L^2(\Omega)^3}^2  + \|  \tilde \bfu \|_{1,2,\edges}^2  \\  \le \int_\Omega p \dive_{\! N} \tilde \bfu \dx  + \int_\Omega \bff \cdot \tilde \bfu \dx.
\end{multline*}
which gives the expected result.
 \end{proof}

The following lemma is the discrete version of lemma \ref{lem:decomp} which was used for the proof of existence of a solution to the correction step \eqref{eq:semi-weakcordis}.
\begin{lemma}[Decomposition of $\HmeshNzero(\Omega)$ vector fields]\label{lem:decompdis}
Let $\disc=(\mesh,\edges)$ be a MAC grid of $\Omega$.
Then for any $\bfw \in \HmeshNzero(\Omega) $ there exists  $(\bfv,\psi) \in \bfE_{\! N}(\Omega) \times L_N(\Omega) $ such that $ \bfw =\bfv+ \nabla_{\! N} \psi $. %
\end{lemma}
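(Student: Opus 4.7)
The plan is to mimic the continuous decomposition of Lemma \ref{lem:decomp} at the discrete level, defining $\psi$ as the solution of a discrete Poisson-type problem and then setting $\bfv = \bfw - \nabla_{\! N} \psi$.

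More precisely, I would first consider the following finite-dimensional linear problem: find $\psi \in L_N(\Omega)$ with $\int_\Omega \psi \dx = 0$ such that
\begin{equation*}
\int_\Omega \nabla_{\! N} \psi \cdot \nabla_{\! N} q \dx = \int_\Omega \bfw \cdot \nabla_{\! N} q \dx, \quad \text{for any } q \in L_N(\Omega).
\end{equation*}
Since $L_N(\Omega)$ is finite-dimensional, existence and uniqueness of $\psi$ in the subspace of zero-mean functions is equivalent to showing that the bilinear form $(q,q') \mapsto \int_\Omega \nabla_{\! N} q \cdot \nabla_{\! N} q' \dx$ is coercive on this subspace. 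This amounts to checking that $\nabla_{\! N} q = 0$ forces $q$ to be constant, which follows from the definition of the discrete MAC gradient (it encodes all differences $p_L - p_K$ across interior faces $\edge = K|L$) combined with the connectedness of $\Omega$; coercivity then follows from the discrete Poincaré-Wirtinger inequality on zero-mean piecewise constant functions. The right-hand side is clearly a continuous linear form on $L_N(\Omega)$, so the Lax-Milgram lemma (in the finite-dimensional quotient space $L_N(\Omega)/\xR$) yields the existence of a unique such $\psi$.

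Once $\psi$ is obtained, I would set $\bfv = \bfw - \nabla_{\! N} \psi$. Since $\bfw \in \HmeshNzero(\Omega)$ and, by the MAC construction, $\nabla_{\! N} \psi$ vanishes on external faces (so that $\nabla_{\! N} \psi \in \HmeshNzero(\Omega)$), we get $\bfv \in \HmeshNzero(\Omega)$. Using the variational equation satisfied by $\psi$, we have for any $q \in L_N(\Omega)$
\begin{equation*}
\int_\Omega \bfv \cdot \nabla_{\! N} q \dx = \int_\Omega \bfw \cdot \nabla_{\! N} q \dx - \int_\Omega \nabla_{\! N} \psi \cdot \nabla_{\! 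N} q \dx = 0,
\end{equation*}
so that, invoking the alternative characterization of $\bfE_{\! N}(\Omega)$ recalled right after its definition, $\bfv \in \bfE_{\! N}(\Omega)$. The decomposition $\bfw = \bfv + \nabla_{\! N} \psi$ then holds by construction.

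The only delicate point is the well-posedness of the discrete Poisson problem, i.e. the coercivity of $(q,q') \mapsto \int_\Omega \nabla_{\! N} q \cdot \nabla_{\! N} q' \dx$ on the zero-mean subspace of $L_N(\Omega)$; this is a standard MAC-mesh property (mean-value Poincaré inequality for piecewise constant functions on $\mesh_N$), but one should cite it carefully from \cite{gal-18-convmac} or \cite{eym-00-book}. Everything else—membership of $\bfv$ in $\HmeshNzero(\Omega)$ and the fact that $\int_\Omega \bfv \cdot \nabla_{\! N} q \dx = 0$—is a direct consequence of the definition of the discrete gradient and of the variational characterization of $\psi$.
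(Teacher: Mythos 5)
Your proposal is correct and follows essentially the same route as the paper: the paper's proof likewise defines $\psi\in L_N(\Omega)$ as the (unique up to a constant) solution of the discrete variational problem $\int_\Omega \nabla_{\! N}\psi\cdot\nabla_{\! N}\xi \dx = \int_\Omega \bfw\cdot\nabla_{\! N}\xi\dx$ for all $\xi\in L_N(\Omega)$ and then sets $\bfv=\bfw-\nabla_{\! N}\psi\in\bfE_{\! N}(\Omega)$. You simply spell out the well-posedness of this finite-dimensional Neumann-type problem (kernel of the discrete gradient reduced to constants, coercivity on the zero-mean subspace) which the paper leaves implicit.
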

\begin{proof}
Let $\psi$ be a solution  (unique, up to a constant)  of  
\begin{align*}
& \psi \in L_N(\Omega),
\\
& \int_\Omega \nabla_{\! N} \psi \cdot \nabla_{\! N} \xi \dx=  \int_\Omega \bfw \cdot \nabla_{\! N} \xi \dx, \textrm{ for any }  \xi \in L_N(\Omega).
 \end{align*}
Then $\bfw =\bfv+ \nabla_{\! N} \psi $ with $\bfv \in \bfE_{\! N}(\Omega)$.
\end{proof}

\bibliographystyle{abbrvurl}
\bibliography{nsproj}

\begin{thebibliography}{10}

\bibitem{bad-2007-conv}
S.~Badia and R.~Codina.
\newblock Convergence analysis of the {FEM} approximation of the first order
  projection method for incompressible flows with and without the inf-sup
  condition.
\newblock {\em Numer. Math.}, 107:533–557, 2007.

\bibitem{bogovskii}
M.~Bogovskii.
\newblock Solution of the first boundary value problem for an equation of
  continuity of an incompressible medium.
\newblock {\em Soviet Math. Dokl.}, 20:1094--1098, 01 1979.

\bibitem{BoyerFabrie-book}
F.~Boyer and P.~Fabrie.
\newblock {\em Mathematical Tools for the Study of the Incompressible
  {N}avier-{S}tokes Equations and Related Models}.
\newblock Applied Mathematical Sciences 183. Springer, 2013.

\bibitem{Chorin1969OnTC}
A.~J. Chorin.
\newblock On the convergence of discrete approximations to the
  {N}avier-{S}tokes equations.
\newblock {\em Mathematics of Computation}, 23(106):341--353, 1969.
\newblock URL: \url{http://www.jstor.org/stable/2004428}.

\bibitem{crouzeix}
M.~Crouzeix.
\newblock Relèvement de la divergence.
\newblock URL:
  \url{https://perso.univ-rennes1.fr/michel.crouzeix/publis/sarame.pdf}.

\bibitem{eym-10-sto}
R.~Eymard, T.~Gallou\"et, R.~Herbin, , and J.-C. Latch\'e.
\newblock Convergence of the {MAC} scheme for the compressible stokes
  equations.
\newblock {\em SIAM Journal on Numerical Analysis}, 48(6):2218--2246, 2010.
\newblock URL: \url{https://hal.archives-ouvertes.fr/hal-00542676/}.

\bibitem{eym-22-lax}
R.~Eymard, T.~Gallou\"et, R.~Herbin, , and J.-C. Latch\'e.
\newblock Finite volume schemes and {L}ax-{W}endroff consistency.
\newblock {\em under revision}, 2022.
\newblock URL: \url{https://hal.archives-ouvertes.fr/hal-03257774}.

\bibitem{eym-00-book}
R.~Eymard, T.~Gallou{\"e}t, and R.~Herbin.
\newblock {\em Finite volume methods}.
\newblock Handbook for Numerical Analysis, Ph. Ciarlet and J.L. Lions Editors,
  North Holland, 2000.
\newblock URL: \url{https://hal.archives-ouvertes.fr/hal-02100732v2}.

\bibitem{gh-edp}
T.~Gallou{\"e}t and R.~Herbin.
\newblock {\em \'Equations aux dérivées partielles non linéaires}.
\newblock under revision.
\newblock URL: \url{https://hal.archives-ouvertes.fr/cel-01196782v3/document}.

\bibitem{gal-12-w1q}
T.~Gallou{\"e}t, R.~Herbin, and J.-C. Latch{\'e}.
\newblock {$W^{1,q}$} stability of the {F}ortin operator for the {MAC} scheme.
\newblock {\em Calcolo}, 49:63--71, 2012.

\bibitem{gal-18-convmac}
T.~Gallou{\"e}t, R.~Herbin, J.-C. Latch{\'e}, and K.~Mallem.
\newblock Convergence of the {M}arker-{A}nd-{C}ell scheme for the
  incompressible {Navier}--{Stokes} equations on non-uniform grids.
\newblock {\em Foundations of Computational Mathematics}, 18:249--289, 2018.

\bibitem{Goda1979AMT}
K.~Goda.
\newblock A multistep technique with implicit difference schemes for
  calculating two- or three-dimensional cavity flows.
\newblock {\em Journal of Computational Physics}, 30:76--95, 1979.

\bibitem{Guermond2006AnOO}
J.-L. Guermond, P.~D. Minev, and J.~Shen.
\newblock An overview of projection methods for incompressible flows.
\newblock {\em Computer Methods in Applied Mechanics and Engineering},
  195:6011--6045, 2006.

\bibitem{Harlow1965NumericalCO}
F.~H. Harlow and J.~E. Welch.
\newblock Numerical calculation of time‐dependent viscous incompressible flow
  of fluid with free surface.
\newblock {\em Physics of Fluids}, 8:2182--2189, 1965.

\bibitem{Kuroki2020OnCO}
H.~Kuroki and K.~Soga.
\newblock On convergence of {C}horin's projection method to a {L}eray-{H}opf
  weak solution.
\newblock {\em Numerische Mathematik}, 146:401--433, 2020.

\bibitem{lax-60-sys}
P.~Lax and B.~Wendroff.
\newblock Systems of conservation laws.
\newblock {\em Communications in Pure and Applied Mathematics}, 13:217--237,
  1960.

\bibitem{leray1934}
J.~Leray.
\newblock Essai sur les mouvements plans d'un fluide visqueux que limitent des
  parois.
\newblock {\em Journal de Mathématiques Pures et Appliquées}, 13:331--418,
  1934.
\newblock URL: \url{http://eudml.org/doc/235388}.

\bibitem{Patankar2019NumericalHT}
S.~V. Patankar.
\newblock Numerical heat transfer and fluid flow.
\newblock {\em Lecture Notes in Mechanical Engineering}, 2019.

\bibitem{rempfer-2006}
D.~Rempfer.
\newblock On boundary conditions for incompressible {N}avier-{S}tokes problems.
\newblock {\em Applied Mechanics Reviews - APPL MECH REV}, 59, 05 2006.
\newblock \href {https://doi.org/10.1115/1.2177683}
  {\path{doi:10.1115/1.2177683}}.

\bibitem{Shen1992OnEE}
J.~Shen.
\newblock On error estimates of projection methods for {N}avier-{S}tokes
  equations: first-order schemes.
\newblock {\em SIAM Journal on Numerical Analysis}, 29:57--77, 1992.

\bibitem{Temam1969SurLD}
R.~Temam.
\newblock Sur l'approximation de la solution des {\'e}quations de
  {N}avier-{S}tokes par la m{\'e}thode des pas fractionnaires (ii).
\newblock {\em Archive for Rational Mechanics and Analysis}, 33:377--385, 1969.

\bibitem{temam1984navier}
R.~Temam.
\newblock {\em Navier--Stokes Equations}.
\newblock Studies in mathematics and its applications. North-Holland, 1984.
\newblock URL: \url{https://books.google.fr/books?id=4B2DZ8Db57IC}.

\bibitem{Kan1986ASA}
J.~van Kan.
\newblock A second-order accurate pressure correction scheme for viscous
  incompressible flow.
\newblock {\em Siam Journal on Scientific and Statistical Computing},
  7:870--891, 1986.

\bibitem{Wesseling2000PrinciplesOC}
P.~Wesseling.
\newblock Principles of computational fluid dynamics.
\newblock 2000.

\end{thebibliography}
\end{document}